\newtheorem{assumption}{Assumption}
\newtheorem{theorem}{Theorem}
\newtheorem{lemma}{Lemma}
\newtheorem{remark}{Remark}
\title[Unfitted LSFEM for Elasticity Interface Problem]{The Least
Squares Finite Element Method for Elasticity Interface Problem on
Unfitted Mesh}
\author[F.-Y. Yang]{Fanyi Yang} \address{School of Mathematical
  Sciences, Peking University, Beijing 100871, P.R. China}
\email{yangfanyi@pku.edu.cn}
\begin{document}


\newcommand{\bm}[1]{\boldsymbol{#1}}
\newcommand{\mb}[1]{\mathbb{#1}}
\newcommand{\mr}[1]{\mathrm{#1}}
\newcommand{\bmr}[1]{\bm{\mr{#1}}}
\newcommand{\mc}[1]{\mathcal{#1}}
\newcommand{\wt}[1]{\widetilde{#1}}
\newcommand{\wh}[1]{\widehat{#1}}
\newcommand{\id}[1]{\mr{d} \boldsymbol{#1}}

\newcommand{\jump}[1]{[\hspace{-2pt}[ #1 ]\hspace{-2pt}]}
\newcommand{\jumpn}[1]{[\hspace{-2pt}[ #1 ]\hspace{-2pt}]_N}

\newcommand{\dx}[1]{{\mr{d}}\bm{#1} }

\newcommand{\tr}[1]{\mr{tr}(#1)}

\newcommand{\enorm}[1]{\| #1 \|_{\bm{\mr{e}}}}
\newcommand{\ehnorm}[1]{\| #1 \|_{\bm{\mr{e}}_h}}
\newcommand{\wtehnorm}[1]{\| #1 \|_{\wt{\bm{\mr{e}}}_h}}

\newcommand{\whehnorm}[1]{\| #1 \|_{\wh{\bm{\mr{e}}}_h}}

\newcommand{\BDK}[1]{B_{\Delta(#1)}}

\newcommand{\shnorm}[1]{| #1 |_{s_h}}
\newcommand{\shinorm}[1]{| #1 |_{s_{h, i}}}
\newcommand{\shonorm}[1]{| #1 |_{s_{h, 0}}}
\newcommand{\shlnorm}[1]{| #1 |_{s_{h, 1}}}

\def\Shm{\mc{S}_h^m}
\def\Ghm{\mc{G}_h^m}
\def\Ghol{\mc{G}_{h, 0}^1}

\newcommand{\shsnorm}[1]{| #1 |_{\mc{S}_h^{m}}}
\newcommand{\shunorm}[1]{| #1 |_{\mc{G}_h^{m}}}

\newcommand{\shubonorm}[1]{| #1 |_{s_{h, 0}^1}}

\newcommand{\shirnorm}[1]{| #1 |_{s_{h, i}^r}}

\newcommand{\bhnorm}[1]{| #1 |_{b_h}}

\newcommand{\red}[1]{{\color{red}#1}}

\def\BDM{\bm{\mr{BDM}}}
\def\RT{\bm{\mr{RT}}}

\def\un{\mr{\bm{n}}}

\def\div{\mr{div}}
\def\curl{\mr{curl}}

\def\mA{\mc{A}}
\def\mJ{\mc{J}}
\def\mG{\mc{G}}
\def\bV{\bmr{V}}
\def\bH{\bmr{H}}
\def\bL{\bmr{L}}
\def\bSi{\bmr{\Sigma}}
\def\btau{\bm{\tau}}
\def\bv{\bm{v}}
\def\brho{\bm{\rho}}
\def\bw{\bm{w}}
\def\bsigma{\bm{\sigma}}
\def\bu{\bm{u}}
\def\bx{\bm{x}}
\def\bvphi{\bm{\varphi}}

\def\ba{\bm{a}}
\def\bb{\bm{b}}

\def\beps{\bm{\varepsilon}}

\def\bt{\bm{t}}
\def\bq{\bm{q}}
\def\bz{\bm{z}}

\def\Bs{B^{\bm{\sigma}}}
\def\Bu{B^{\bm{u}}}

\def\be{\bm{\varepsilon}}

\def\MTh{\mc{T}_h}

\def\MThi{\mc{T}_{h, i}}
\def\MTho{\mc{T}_{h, 0}}
\def\MThl{\mc{T}_{h, 1}}

\def\MThic{\mc{T}_{h, i}^{\circ}}
\def\MThoc{\mc{T}_{h, 0}^{\circ}}
\def\MThlc{\mc{T}_{h, 1}^{\circ}}

\def\MThG{\mc{T}_h^{\Gamma}}

\def\Ohi{\Omega_{h, i}}
\def\Oho{\Omega_{h, 0}}
\def\Ohl{\Omega_{h, 1}}

\def\OhG{\Omega_h^{\Gamma}}

\def\Ohic{\Omega_{h, i}^{\circ}}
\def\Ohoc{\Omega_{h, 0}^{\circ}}
\def\Ohlc{\Omega_{h, 1}^{\circ}}

\def\MEh{\mc{E}_h}
\def\MEhI{\mc{E}_h^{I}}
\def\MEhB{\mc{E}_h^B}


\begin{abstract}
  In this paper, we propose and analyze the least squares finite
  element methods for the linear elasticity interface problem 
  in the stress-displacement system on unfitted meshes. 
  We consider the cases that the interface is $C^2$ or polygonal, and
  the exact solution $(\bsigma, \bu)$ belongs to $H^s(\div; \Omega_0
  \cup \Omega_1) \times H^{1+s}(\Omega_0 \cup \Omega_1)$ with $s >
  1/2$. Two types of least squares functionals are defined to seek the
  numerical solution. 
  The first is defined by simply applying the
  $L^2$ norm least squares principle, and requires the condition
  $s \geq 1$.
  The second is defined with a discrete minus norm, which is related
  to the inner product in $H^{-1/2}(\Gamma)$. The use of this discrete
  minus norm results in a method of optimal convergence rates and
  allows the exact solution has the regularity of any $s > 1/2$.  The
  stability near the interface for both methods is guaranteed by the
  ghost penalty bilinear forms and we can derive the robust condition
  number estimates.  The convergence rates under $L^2$ norm and the
  energy norm are derived for both methods.  We illustrate the
  accuracy and the robustness of the proposed methods by a series of
  numerical experiments for test problems in two and three dimensions.

  \noindent \textbf{keywords}: linear elasticity interface problem;
  extended finite element space; discrete minus norm; least squares
  finite element method; unfitted mesh
\end{abstract}

\maketitle

\section{Introduction}
\label{sec_introduction}
In this paper, we develop the least squares finite element methods
(LSFEMs) for linear elasticity interface problems, which model the
elasticity structure with different or even singular material
properties, and have many applications in fields of materials science
and continuum mechanics \cite{Gibiansky2000multiphase,
Leo2000microstructural, Gao2001continuum, Becker2009nitsche,
Almqvist2011interfacial}. For such problems, the governing equations
usually have discontinuous coefficients and involve the inhomogeneous
jump conditions. 
Because of the discontinuity near the interface and the irregular
geometry of the interface, it is still challenging to design efficient
numerical methods for such equations.

The finite element method is an important numerical method for 
solving interface problems. In the last decades, various numerical
schemes have been developed for the elliptic interface problem, and we
refer to \cite{Li1998immersed, Hansbo2002unfittedFEM,
Chen1998interface, Burman2015cutfem, Bordas2017geometrically,
Huang2017unfitted, Wu2012unfitted, Burman2021unfitted} for some
typical methods.  The finite element methods can be roughly classified
into fitted and unfitted methods based on types of grids.  The
body-fitted method requires the mesh to align with the interface for
representing the geometry of the interface accurately. For complex
geometries, it is a challenging and time-consuming task to generate a
high quality body-fitted mesh especially in high dimensions
\cite{Huang2017unfitted}. In the unfitted method, the interface
description is decoupled from the generation of the mesh, which
provides a good flexibility when handling the problem with complex
geometries.  Examples of such methods are the cut finite element
methods \cite{Hansbo2002unfittedFEM, Burman2015cutfem,
Massing2019stabilized, Bordas2017geometrically, Burman2021unfitted,
Zhang2022high},
the immersed finite element method \cite{Li1998immersed,
Li2006immersed, Guo2020error} and the aggregated finite element
method \cite{Badia2018aggregated, Johansson2013high, Chen2021an}. 

Recently, the unfitted finite element methods are also been applied to
solve the linear elasticity interface problem. 
In \cite{Hansbo2004finite}, Hansbo and Hansbo proposed a linear finite
element method and derived the optimal convergence rates in error
measurements under the assumption that the exact solution $\bu$ is
piecewise $H^2$. In \cite{Becker2009nitsche}, Becker et al. developed
a mixed finite element method with the linear accuracy for the
displacement-pressure formulation. The inf-sup condition and the
optimal error estimates are verified. Both methods are also called
Nitsche extended finite element methods (Nitsche-XFEM), where the jump
conditions are weakly imposed by the Nitsche penalty method
\cite{Hansbo2002unfittedFEM}.
In \cite{Zhang2022high}, Zhang followed the interface-penalty idea and
presented a high-order unfitted method for the elasticity interface
problem. Combining with the penalty method and the hybridizable
discontinuous Galerkin approximation, Han et al. developed a 
X-HDG method for this problem. Another type of unfitted finite element
methods is the immersed finite element method \cite{Li1998immersed},
which mainly modifies the basis functions near the interface to capture
the jump of the solution. In \cite{Lin2013locking,
Lin2019nonconforming}, the authors presented the nonconforming
immersed finite element methods for the linear elasticity interface
problem. Other types of immersed finite element methods can be found
in \cite{Guo2020error, Kwak2017stablized}.
We note that all above mentioned methods derive the error estimates
under the assumption that the exact solution $\bu$ has at least
piecewise $H^2$ regularity.
Many analysis techniques that are
developed for piecewise $H^2$ solutions in the elliptic interface
problem can be used in this case. But
these techniques may be unavailable for the solution that has only
piecewise $H^{1+s}(s < 1)$ regularity. In addition, 
for piecewise $H^1(\div)$ or piecewise $H^1(\curl)$ functions, 
applying these
techniques to estimate the errors will result in a suboptimal
convergence rate, see \cite{Liu2020interface, Li2023curl} for unfitted
methods in solving $H(\curl)$- and $H(\div)$-interface problems.
To our best knowledge, there are few works on the
interface problem of low regularity. In \cite{Guo2020interface}, the
authors present an immersed finite element method for
$H(\curl)$-interface problem with the optimal convergence rates. Only
piecewise $H^1(\curl)$ regularity of the exact solution is required in
the analysis.

In this paper, we develop least squares finite element methods
on unfitted meshes for the linear elasticity interface
problem, based on the stress-displacement formulation. 
For traditional
linear elasticity problems, the LSFEMs have been investigated in
\cite{Cai2003first, Cai2004least, Cai1998first, Bramble2001least,
Starke2011analysis, Li2019least}. The LSFEM can offer the advantage of
circumventing the inf-sup condition arising in mixed methods and
ensure the resulting linear system is always symmetric positive
definite.
As the standard LSFEM, we define the least
squares functionals and seek the numerical solution by minimizing the
functional over finite element spaces. Two types of least squares 
functionals are used in this paper. The first is defined by simply applying
the $L^2$ norm least squares principle to the stress-displacement.
Th defined functional only involves the $L^2$ norms and the jump conditions
are also enforced in the sense of $L^2$ norms. This method requires
the exact solution $(\bsigma, \bu)$ has the regularity $H^s(\div;
\Omega_0 \cup \Omega_1) \times H^{1+s}(\Omega_0 \cup \Omega_1)$ with
$s \geq 1$. The convergence rate in the $H(\div)
\times H^1$ norm is shown to be half order lower than the optimal
rate.
From the embedding theory, we know that any
$\btau \in H(\div; \Omega_0 \cup \Omega_1)$ has the normal trace $\un
\cdot \btau \in H^{-1/2}(\Gamma)$, but the stronger $L^2$ norm is
applied to handle the normal trace in this method. That is the reason
that the condition $s \geq 1$ is required and the convergence rate is
lower than the optimal value. To overcome this difficulty, 
we define another least squares functional 
with a discrete minus inner product.  This method follows from the
ideas in \cite{Bramble2001least, Bramble1997least}, where the
discrete minus norms corresponding to $H^{-1}(\Omega)$ are used. In
this paper, we define a discrete minus inner product that is related
to the inner product in the space $H^{-1/2}(\Gamma)$.
The use of this inner product allows us to relax the regularity
condition as $s > 1/2$, and gives the optimal convergence rates under
the error measurement with respect to the required regularity. 
It is noticeable that the optimal convergence rate under the $H(\div)$
norm is achieved for functions in $H^s(\div; \Omega_0 \cup \Omega_1)$.
We also point out that in the unfitted methods, the $H^1$ trace
estimate is usually the main tool to estimate the numerical error on
the interface. For the low regularity case $s < 1$, this estimate is
unavailable, and we use the embedding theory in the error estimation
instead.

Another important issue for unfitted methods is the presence of small
cuts near the interface. In our method, we employ the ghost penalty
method \cite{Burman2010ghost} to cure the effects bringing by small
cuts. The ghost penalty bilinear forms also correspond to $L^2$ norms,
and they can be added in the least squares functional without any
difficulty. We can prove a uniform upper bound of the condition number
to the resulting linear system with proper penalty forms. We give a
suitable penalty bilinear form with the polynomial local extension,
and the standard penalty forms given in \cite{Burman2010ghost,
Massing2019stabilized} can also be used in our methods. 

The rest of this article is organized as follows. 
In Section \ref{sec_preliminaries}, we introduce the basic notation
and define the ghost penalty bilinear forms. In Section
\ref{sec_problem}, we introduce the stress-displacement formulation
for the linear elasticity interface problem 
and the associated least squares functional. Section \ref{sec_method}
develops the numerical schemes. The least squares finite element
methods with $L^2$ norms and with the discrete minus norm are
established in Subsection \ref{subsec_L2norm} and Subsection
\ref{subsec_mnorm}, respectively, and the error estimates are also
included.  Finally, numerical results of test problems of linear
elasticity interface problems in two and three dimensions are presented
in Section \ref{sec_numericalresults}.

\section{Preliminaries} 
\label{sec_preliminaries}
Let $\Omega \subset \mb{R}^d(d = 2, 3)$ be a convex polygonal
(polyhedral) domain with the boundary $\partial \Omega$.  Let
$\Omega_0 \Subset \Omega$ be a polygonal (polyhedral) subdomain or a
subdomain with the $C^2$-smooth boundary. We denote by $\Gamma :=
\partial \Omega_0$ the topological boundary, which can be regarded as
an interface dividing $\Omega$ into two disjoint domains $\Omega_0$
and $\Omega_1$, where $\Omega_1: = \Omega \backslash
\overline{\Omega}_0$, $\Omega_0 \cap \Omega_1 = \varnothing$ and
$\overline{\Omega}_0 \cup \overline{\Omega}_1 = \overline{\Omega}$.
We denote by $\MTh$ a quasi-uniform partition of $\Omega$ into
triangle (tetrahedron) elements. 
The mesh $\MTh$ is unfitted that the element faces in $\MTh$ are not
required to be aligned with the interface $\Gamma$.  
For any element $K \in \MTh$,  we denote by $h_K$ its diameter and by
$\rho_K$ the radius of the largest disk (ball) inscribed in $K$. 
Let $h:= \max_{K \in \MTh} h_K$ be the mesh size, and let $\rho :=
\min_{K \in \MTh} \rho_K$. 
The mesh $\MTh$ is quasi-uniform in the sense that there exists a
constant $C_\nu$ independent of $h$ such that $h \leq C_\nu \rho$.

We further introduce the notations related to subdomains $\Omega_0$
and $\Omega_1$.  
For $i = 0, 1$, we define $\MThi := \{ K \in \MTh \ | \
K \cap \Omega_i \neq \varnothing \}$ as the minimal subset of $\MTh$
that entirely covers $\Omega_i$, and define $ \MThic := \{ K \in \MThi
\ | \ K \subset \Omega_i\}$ as the set of interior elements in the
domain $\Omega_i$.
We define $\MThG := \{ K \in \MTh \ | \ K \cap
\Gamma \neq \varnothing \}$ as the collection of all cut elements.
Their corresponding domains are defined as $\Ohi :=
\text{Int}(\bigcup_{K \in \MThi} \overline{K})$, $\Ohic :=
\text{Int}(\bigcup_{K \in \MThic} \overline{K})$, and $\OhG :=
\text{Int}(\bigcup_{K \in \MThG} \overline{K})$. Notice
that there holds
$\OhG = \Ohi \backslash \overline{\Omega}_{h, i}^{\circ}$.
For any element $K \in \MTh$, we let $K^i := K \cap \Omega_i$ and for
any cut element $K \in \MThG$, we let $\Gamma_K := K \cap \Gamma$.

For any element $K \in \MTh$, we define $\Delta(K) := \{ K' \in \MTh \
| \  \overline{K'} \cap \overline{K} \neq \varnothing \}$ as the
set of elements touching $K$. Any element in $\Delta(K)$ at
least shares one vertex with $K$.
We denote by $B(\bm{z}, r)$ the disk (ball) centered at the point
$\bm{z}$ with the radius $r$. Since $\MTh$ is quasi-uniform, there
exists a generic constant $C_\Delta$ such that $\bigcup_{K' \in
\Delta(K)} \overline{K'} \subset B(\bm{x}_K, C_\Delta h)$ for $\forall
K \in \MTh$,  where $\bm{x}_K$ is the barycenter of $K$. 
We set $\BDK{K}:= B(\bm{x}_K, C_\Delta h)$ for $\forall K \in \MTh$. 

Throughout this paper, $C$ and $C$ with subscripts are denoted to be
generic positive constants that may vary in different lines, but are
always independent of the mesh size $h$, how the interface $\Gamma$
cuts the mesh $\MTh$, and the Lam\'e parameter $\lambda$ defined in
\eqref{eq_lame}. 

We make the following geometrical assumptions on the mesh: 
\begin{assumption}
  For any cut element $K \in \MThG$, the interface $\Gamma$ intersects
  each face of $K$ at most once.
  \label{as_mesh1}
\end{assumption}
\begin{assumption}
  For any cut element $K \in \MThG$, the sets $\Delta(K) \cap \MThoc$
  and $\Delta(K) \cap \MThlc$ are not empty.  
  \label{as_mesh2}
\end{assumption}

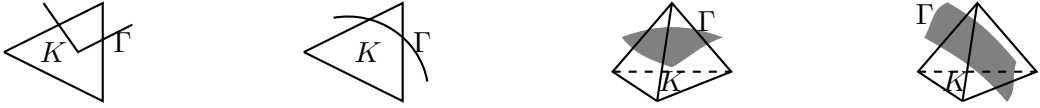
\begin{figure}
  \centering
 \begin{minipage}[t]{0.23\textwidth}
    \begin{center}
      \begin{tikzpicture}[scale=1.3]
        \draw[thick, black] (-1, 0) -- (0, 0.5) -- (0, -0.5) --
        (-1, 0);
        \draw[thick, black] (-0.6, 0.5) -- (-0.25, 0) -- (0.3, 0.28);
        \node at (0.2, 0.1) {$\Gamma$};
        \node at (-0.5, 0) {$K$};
      \end{tikzpicture}
    \end{center}
  \end{minipage}
  \begin{minipage}[t]{0.23\textwidth}
    \begin{center}
      \begin{tikzpicture}[scale=1.3]
        \draw[thick, black] (-1, 0) -- (0, 0.5) -- (0, -0.5) --
        (-1, 0);
        \draw[thick, black] (-0.7, 0.35) to [out=10, in=100] (0.25,
        -0.3);
        \node at (0.2, 0.1) {$\Gamma$};
        \node at (-0.35, 0) {$K$};
      \end{tikzpicture}
    \end{center}
  \end{minipage}
  \begin{minipage}[t]{0.23\textwidth}
    \begin{center}
      \begin{tikzpicture}[scale=1.3]
        \draw[fill, gray] (-0.5, 0.15) to [out=-52, in = 160] (-0,
        -0.15) to [out=30, in = 200] (0.5, 0.15)
        to [out=160, in=20] (-0.5, 0.15);
        \draw[thick, black] (-0.15, -0.5) -- (-0.6, -0.2) -- (0, 0.5) --
        (0.6, -0.2) -- (-0.15, -0.5);
        \draw[thick, black] (0, 0.5) -- (-0.15, -0.5);
        \draw[thick, dashed, black] (-0.6, -0.2) -- (0.6, -0.2);
        \node at (0.35, 0.32) {$\Gamma$};
        \node at (0, -0.3) {$K$};
      \end{tikzpicture}
    \end{center}
  \end{minipage}
  \begin{minipage}[t]{0.23\textwidth}
    \begin{center}
      \begin{tikzpicture}[scale=1.3]
        \draw[fill, gray] (-0.3, 0.5) to [out=200, in=60] (-0.53,
        0.15) to [out=-30, in = 130] (0.3, -0.5) to [out = 50, in =
          250]
        (0.39, -0.1) to [out=130, in = -30] (-0.3, 0.5);
        \draw[thick, black] (-0.15, -0.5) -- (-0.6, -0.2) -- (0, 0.5) --
        (0.6, -0.2) -- (-0.15, -0.5);
        \draw[thick, black] (0, 0.5) -- (-0.15, -0.5);
        \draw[thick, dashed, black] (-0.6, -0.2) -- (0.6, -0.2);
        \node at (-0.53, 0.39) {$\Gamma$};
        \node at (-0.22, -0.3) {$K$};
      \end{tikzpicture}
    \end{center}
  \end{minipage}
  \caption{Examples of cut elements in two dimensions (left) / in
  three dimensions (right).}
  \label{fig_cutelements}
\end{figure}
Assumption \ref{as_mesh1} - Assumption \ref{as_mesh2} ensure the
interface is well-resolved by the mesh $\MTh$, which are widely used
in unfitted finite element methods \cite{Hansbo2002unfittedFEM,
Wu2012unfitted, Burman2015cutfem}. 
Some examples of cut elements are shown in Fig.~\ref{fig_cutelements}. 
From Assumption \ref{as_mesh2}, we can assign two interior elements
$K_0^{\mr{int}}  \in \Delta(K) \cap \MThoc$ and $K_1^{\mr{int}} \in
\Delta(K) \cap \MThlc$ for any cut element $K \in \MThG$. 
In principle, $K_i^{\mr{int}}(i = 0, 1)$ can be anyone in $\Delta(K)
\cap \MThic$.  In practice, one can select $K_i^{\mr{int}}$ to share a
common face with $K$ whenever possible. Consequently, Assumption
\ref{as_mesh2} allows us to define two maps $M^i(\cdot)(i = 0, 1):
\MThG \rightarrow \MThic$ that $M^i(K) = K_i^{\mr{int}}$ for $\forall
K \in \MThG$.

Let us introduce the notation of trace operators on the interface.
Let $\bv$ be the vector- or tensor-valued function, we define the jump
operators $\jump{\cdot}$ and $\jumpn{\cdot}$ as 
\begin{displaymath}
  \jump{\bv}|_\Gamma := \bv^0|_{\Gamma} - \bv^1|_{\Gamma}, \quad
  \jumpn{\bv}|_{\Gamma} := \un_{\Gamma} \cdot ( \bv^0|_{\Gamma} -
  \bv^1|_{\Gamma}), 
\end{displaymath}
where $\bv^0 := \bv|_{\Omega_0}, \bv^1 := \bv|_{\Omega_1}$, and
$\un_{\Gamma}$ denotes the unit outward normal vector pointing to
$\Omega_1$ on $\Gamma$.

For an open bounded domain $D$, we let $H^r(D)$ denote the usual
Sobolev spaces with the exponent $r \geq 0$, and we follow their
corresponding inner products, seminorms and norms.  We define
$\bH^r(D) := (H^r(D))^d$ and $\mb{H}^r(D) := (H^r(D))^{d \times d}$ as
the Sobolev spaces of vector and tensor fields, respectively.
We let $L^2(D)$ coincide with $H^r(D)$ for $r = 0$. Further,
we introduce $H^r(\div; D) := \{ \bv \in
\bH^r(D) \ | \ \nabla \cdot \bv \in H^r(D) \}$  with the norm $\| \bv
\|_{H^r(\div; D)}^2 := \| \bv \|_{H^r(D)}^2 + \| \nabla \cdot \bv
\|_{H^r(D)}^2$, and let $\bH^r(\div; D) := (H^r(\div; D))^d$ be
the spaces of tensor fields. Each column of functions in
$\bH^r(\div; D)$ belongs to $H^r(\div; D)$. 
Let $H^{-1/2}(\partial D)$ be the dual space of
$H^{1/2}(\partial D)$ with the norm 
\begin{equation}
  \| v \|_{H^{-1/2}(\partial D)} := \sup_{0 \neq \varphi \in
  H^{1/2}(\partial D)} \frac{(v, \varphi)_{L^2(\partial D)}}{ \|
  \varphi \|_{H^{1/2}(\partial D)}}, \quad \forall v \in
  H^{-1/2}(\partial D).
  \label{eq_Hm12norm}
\end{equation}
From the trace theory, we know that any function $\bv \in H(\div; D)$
has a normal trace $\un \cdot \bv \in H^{-1/2}(\partial D)$ with that
$\| \un \cdot \bv \|_{H^{-1/2}(\partial D)} \leq C \| \bv \|_{H(\div;
D)}$. For vector fields, we let $\bH^{-1/2}(\partial D)$ be the dual
space of $\bH^{1/2}(\partial D)$, and the corresponding norm is
the same as \eqref{eq_Hm12norm} by replacing $\varphi \in
H^{1/2}(\partial D)$ with $\bm{\varphi} \in \bH^{1/2}(\partial D)$.

To cure the effect bringing by small cuts near the interface, we
follow the idea in the ghost penalty method \cite{Burman2010ghost,
Massing2019stabilized}, which uses the data from the interior domain
to ensure the stability near the interface. For this goal, 
we assume that we can construct two bilinear forms $s_{h, 0}^r(\cdot,
\cdot)$ and $s_{h, 1}^r(\cdot, \cdot)$ which are defined for functions
in $L^2(\Oho)$ and $L^2(\Ohl)$, respectively. 
The induced seminorms $\shirnorm{\cdot}(i = 0, 1)$ are given as $
\shirnorm{v}^2 := s_{h, i}^r(v, v)(i = 0, 1)$ for $\forall v \in
L^2(\Ohi)$. In our method, we assume that the forms  satisfy the
following two properties:
\begin{enumerate}
  \item[\textbf{P1}:] the $L^2$ norm extension property: 
    \begin{equation}
      \begin{aligned}
        \| v \|_{L^2(\Ohi)} \leq C( \| v\|_{L^2(\Omega_i)} +
        \shirnorm{v}) \leq C \| v \|_{L^2(\Ohi)}, \quad \forall v \in
        L^2(\Ohi), \quad i = 0, 1.
      \end{aligned}
      \label{eq_shiL2extension}
    \end{equation}
  \item[\textbf{P2}:] the weak consistency: 
    \begin{equation}
      \shirnorm{v} \leq C h^t \| v \|_{H^{s+1}(\Omega)}, \quad \forall
      v \in H^{s+1}(\Omega), \quad i = 0, 1, \quad t = \min(s + 1, r +
      1).
      \label{eq_weakconsist}
    \end{equation}
\end{enumerate}
The suitable penalty forms $s_{h, i}^r(\cdot,
\cdot)$ can be constructed by the face-based penalties and the
projection-based penalties, see \cite{Massing2019stabilized,
Burman2015cutfem}. The penalty forms constructed 
in \cite[Section 2.7]{Massing2019stabilized} satisfy \textbf{P1} -
\textbf{P2}. We also note that in the standard ghost penalty method
for elliptic interface problems,
extra properties of the penalty form are required, as the $H^1$
seminorm extension property and the inverse estimate, see \cite[EP1 -
EP4]{Massing2019stabilized}. In our method, \textbf{P1} and
\textbf{P2} are enough to ensure the stability near the interface.

Here, we outline a method to construct the penalty bilinear forms by
the local polynomial extension. The implementation is quite simple.
The idea of the local
extension has also widely used in unfitted methods, see
\cite{Badia2018aggregated,Burman2021unfitted, Chen2021an,
Huang2017unfitted, Johansson2013high, Yang2022an}. 
For any element $K \in \MTh$, we define the local extension
operator $E_K^r(r \geq 0)$ that extends the function in $L^2(K)$ to
the ball $\BDK{K}$ by  
\begin{equation}
  \begin{aligned}
    E_K^r:  L^2(K) &\rightarrow \mb{P}_r(\BDK{K}), \\
     v &\rightarrow E_K^r v,
   \end{aligned} \quad \text{$E_K^r v$ has the same expression as
   $\Pi_K^r v$, \ i.e. $(E_K^r v)|_K = \Pi_K^r v$,}
  \label{eq_EK}
\end{equation}
where $\Pi_K^r$ is the $L^2$ projection operator from $L^2(K)$ to
$\mb{P}_r(K)$. Since $K \in \BDK{K}$, for any $v \in L^2(K)$, $E_K^r
v$ is the direct extension of the $L^2$ projection
$\Pi_K^r v$ from $K$ to the ball $\BDK{K}$. 
Particularly, for any polynomial $v \in \mb{P}_r(K)$, $E_K^r v$ is the
direct polynomial extension of $v$ to the ball $\BDK{K}$. 
From the definition \eqref{eq_EK}, we can prove the following basic
property of $E_K^r$, 
\begin{equation}
  \| E_K^r v \|_{L^2(\BDK{K})} \leq C \| \Pi_K^r v \|_{L^2(K)} \leq C
  \| v \|_{L^2(K)}, \quad \forall v \in L^2(K), \quad \forall K \in
  \MTh. 
  \label{eq_EKL2}
\end{equation}
The norm equivalence on the finite dimensional space gives us that $\|
v \|_{L^2(B(\bm{0}, C_\Delta C_{\nu}))} \leq C \| v
\|_{L^2(B(\bm{0},1))}$ for $\forall v \in \mb{P}_r(B(\bm{0}, C_\Delta
C_\nu))$. Considering the affine mapping from $B(\bm{0}, 1)$ to
$B(\bm{x}_K, \rho)$, we derive that
\begin{equation}
  \begin{aligned}
    \| E_K^r v \|_{L^2(\BDK{K})} & \leq C \| E_K^r v
    \|_{L^2(B(\bm{x}_K, \rho))} =  C \| \Pi^r_K v \|_{L^2(B(\bm{x}_K,
    \rho))} \leq C \| \Pi^r_K v \|_{L^2(K)}, \ \forall v \in
    L^2(K),
  \end{aligned}
  \label{eq_EKL22}
\end{equation}
which leads to the stability property \eqref{eq_EKL2}.

For any $v \in L^2(\Ohi)(i = 0, 1)$, there holds $v|_K \in L^2(K)$ 
and $E_K^r(v|_K) \in \mb{P}_r(\BDK{K})$ for $\forall K \in \MThi$.
From \eqref{eq_EK},  $E_K^r (v|_K)$ is the direct polynomial extension
of $\Pi_K^r (v|_K)$ from $K$ to the ball $\BDK{K}$.
Hereafter, we simply write
$E_K^r (v|_K)$ as $E_K^r v$ for $\forall v \in L^2(\Ohi)$. 
This notation is most frequently used for $v$ is a piecewise
polynomial function of degree $r$ on the mesh $\MThi$.
In this case, $E_K^r v \in \mb{P}_r(\BDK{K})$ is just the direct
polynomial extension of $v|_K$ from $K$ to $\BDK{K}$ for $\forall K
\in \MThi$.

Based on $E_K^r$, two bilinear forms $s_{h,0}^r(\cdot,
\cdot)$ and $s_{h, 1}^r(\cdot, \cdot)$ that satisfy the
conditions \textbf{P1} and \textbf{P2} can be constructed as
\begin{equation}
  s_{h, i}^r(v, w) := \sum_{K \in \MThG} \int_K (v - E_{M^i(K)}^r v)(w
  - E_{M^i(K)}^r w) \dx{x}, \quad \forall v, w \in L^2(\Ohi), \quad i
  = 0, 1.
  \label{eq_shi}
\end{equation}
Notice that any cut element $K \in \MThG$ has that $K \subset
\BDK{M^i(K)}$. Therefore, $E_{M^i(K)}^r v \in  \mb{P}_r(\BDK{M^i(K)})$
is well defined on $K$.  Then, we verify the properties \textbf{P1} -
\textbf{P2} for the forms \eqref{eq_shi}. By \eqref{eq_EKL2}, we know
that 
\begin{displaymath}
  \|E_{M^i(K)}^r v \|_{L^2(K)} \leq \|E_{M^i(K)}^r v
  \|_{L^2(\BDK{M^i(K)})} \leq C \| v \|_{L^2(M^i(K))}, \quad \forall v
  \in L^2(\Ohi), \quad \forall K \in \MThi.
\end{displaymath}
Combining with the triangle inequality, one can find that for $\forall
v \in L^2(\Ohi)$, 
\begin{displaymath}
  \begin{aligned}
    s_{h, i}^r(v, v) \leq \sum_{K \in \MThG} ( \|v \|_{L^2(K)}^2 +
    \|E_{M^i(K)}^r v \|_{L^2(K)}^2) \leq C  \sum_{K \in \MThG} ( \|v
    \|_{L^2(K)}^2 + \| v \|_{L^2(M^i(K))}^2) \leq C \|v
    \|_{L^2(\Ohi)}^2,
  \end{aligned}
\end{displaymath}
which gives the upper bound in \eqref{eq_shiL2extension}.
Again by
\eqref{eq_EKL2} and the triangle inequality, there holds
\begin{displaymath}
  \begin{aligned}
    \| v \|_{L^2(\OhG)}^2 &= \sum_{K \in \MThG} \| v \|_{L^2(K)}^2 
    \leq  C \sum_{K \in \MThG} ( \| v - E_{M^i(K)}^r v \|_{L^2(K)}^2
    + \|E_{M^i(K)}^r v \|_{L^2(K)}^2) \\
    & \leq Cs_{h, i}^r(v, v) + C \sum_{K \in \MThG} \|v
    \|_{L^2(M^i(K))}^2 \leq C(\shirnorm{v}^2 + \|v
    \|_{L^2(\Ohic)}^2), \quad \forall v \in L^2(\Ohi).
  \end{aligned}
\end{displaymath}
The lower bound of \eqref{eq_shiL2extension} is reached and the
property \textbf{P1} holds for \eqref{eq_shi}. 
We turn to the weak consistency \textbf{P2}. 
Given any $v \in H^{s+1}(\Omega)$, and for 
any $K \in \MThG$, there exists $p_K \in
\mb{P}_r(\BDK{K})$ such that $ \| v - p_K \|_{L^2(\BDK{K})} \leq C
h^t \|v \|_{H^{s+1}(\BDK{K})}$\cite{Dupont1980polynomial}. From
\eqref{eq_EKL2}, we obtain that
\begin{displaymath}
  \begin{aligned}
    &\| v - E_{M^i(K)}^r v \|_{L^2(K)}  \leq \| v - p_K \|_{L^2(K)} +
    \| E_{M^i(K)}^r(p_K - v) \|_{L^2(K)} \\
    & \leq  \| v - p_K \|_{L^2(K)} + C \|p_K - v \|_{L^2(M^i(K))}
    \leq C \| v - p_K \|_{L^2(\BDK{K})} \leq  C h^t \|v
    \|_{H^{s+1}(\BDK{K})}.
  \end{aligned}
\end{displaymath}
Summation over all cut elements indicates the estimate
\eqref{eq_weakconsist}, i.e.  the property \textbf{P2} is reached.

Furthermore, it is natural to extend the bilinear forms $s_{h,
i}^r(\cdot, \cdot)(i = 0, 1)$ for vector- and
tensor-valued functions in a componentwise manner.
Let $\bv, \bw \in \bL^2(\Ohi)$ with $\bv = (v_j)_d, \bw = (w_j)_d$ be
the vector-valued functions, and let $\btau, \brho \in \mb{L}^2(\Ohi)$
with $\btau = (\tau_{jk})_{d \times d}, \brho = (\rho_{jk})_{d \times
d}$ be the tensor-valued functions, 
we define
\begin{displaymath}
  {s}_{h, i}^r(\bv, \bw) := \sum_{j = 0}^d s_{h, i}^r(v_j, w_j),
  \quad {s}_{h, i}^r(\btau, \brho) := \sum_{1 \leq j, k \leq d}
  s_{h, i}^r(\tau_{jk}, \rho_{jk}),
\end{displaymath}
with the induced seminorms $\shirnorm{\bv}^2 := s_{h, i}^r(\bv, \bv)$
and $\shirnorm{\btau}^2 := s_{h, i}^r(\btau, \btau)$. The properties
\textbf{P1} - \textbf{P2} can be extended for vector- and
tensor-valued functions without any difficulty.
We notice that int the computer implementation, the bilinear forms are
always with piecewise polynomial spaces. The operator $E_K^r$
is just the direct extension operator, and there is no need to
implement the $L^2$ projection in \eqref{eq_EK}.

We close this section by giving the $H^1$ trace estimate
\cite{Wu2012unfitted, Hansbo2002unfittedFEM, Guzman2018infsup} on the
interface. 
\begin{lemma}
  There exists a constant $C$ such that
  \begin{equation}
    \| w \|_{L^2(\Gamma_K)}^2 \leq C ( h_K \| w \|_{H^1(K)}^2 +
    h_K^{-1} \| w \|_{L^2(K)}^2), \quad \forall w \in H^1(K), \quad
    \forall K \in \MThG.
    \label{eq_H1trace}
  \end{equation}
  \label{le_H1trace}
\end{lemma}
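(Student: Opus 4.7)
The plan is to reduce the inequality to a reference element via an affine change of variables, apply a standard Sobolev trace inequality there with a constant that is uniform across cut configurations, and then scale back to $K$.

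For each $K \in \MThG$, I would introduce the affine map $F_K:\hat K \to K$ onto a reference simplex $\hat K$ of unit diameter, and set $\hat w := w \circ F_K$ and $\hat\Gamma_K := F_K^{-1}(\Gamma_K)$. By the quasi-uniformity of $\MTh$, the Jacobian of $F_K$ has magnitude $\sim h_K$, so the standard scaling identities read
\begin{equation*}
  \| w\|_{L^2(\Gamma_K)}^2 \sim h_K^{d-1}\|\hat w\|_{L^2(\hat\Gamma_K)}^2,\quad
  \| w\|_{L^2(K)}^2 \sim h_K^{d}\|\hat w\|_{L^2(\hat K)}^2,\quad
  \|\nabla w\|_{L^2(K)}^2 \sim h_K^{d-2}\|\nabla\hat w\|_{L^2(\hat K)}^2.
\end{equation*}

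On the reference element, the goal is the $h$-independent trace estimate $\|\hat w\|_{L^2(\hat\Gamma_K)}^2 \leq C(\|\hat w\|_{L^2(\hat K)}^2 + \|\nabla\hat w\|_{L^2(\hat K)}^2)$. I would obtain this by first extending $\hat w$ from $\hat K$ to a fixed enclosing ball $\hat B \supset \hat K$ via a bounded Sobolev extension operator, and then invoking the classical trace theorem for $H^1$ functions on a Lipschitz hypersurface applied to $\hat\Gamma_K \subset \hat B$. Substituting this into the scaling identities and multiplying through by $h_K^{d-1}$ produces exactly the two terms $h_K\|w\|_{H^1(K)}^2$ and $h_K^{-1}\|w\|_{L^2(K)}^2$.

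The main obstacle is justifying that the trace constant on the reference side can be chosen \emph{uniformly} over all admissible cut configurations, since in principle $\hat\Gamma_K$ can lie anywhere inside $\hat K$. The key inputs are Assumption~\ref{as_mesh1} together with the $C^2$ (or polygonal) regularity of $\Gamma$: the single-intersection condition per face implies that $\hat\Gamma_K$ is connected and can be represented as a graph of a scalar function over one of the faces of $\hat K$, while the global regularity of $\Gamma$ combined with the $h_K$-scaling forces this graph function to have $C^2$ norm uniformly bounded on the unit scale (in fact, the curvature is $O(h_K)$ after pull-back). Consequently $\{\hat\Gamma_K\}$ forms a family of uniformly Lipschitz hypersurfaces with uniformly bounded $(d-1)$-measure inside $\hat K$, which is enough to guarantee a single constant $C$ in the reference trace inequality and thus close the argument.
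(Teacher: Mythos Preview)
The paper does not give its own proof of this lemma: immediately after the statement it simply writes ``We refer to \cite{Guzman2018infsup} for the proof only assuming $\Gamma$ is Lipschitz'' and moves on. So there is no in-paper argument to compare against.

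Your proposal is the standard scaling-to-a-reference-element proof (as in Hansbo--Hansbo, Wu--Xiao, and indeed the references the paper cites), and the outline is correct. The scaling identities are right, and you have correctly identified that the only nontrivial point is the \emph{uniformity} of the reference trace constant over all admissible cut configurations $\hat\Gamma_K$. Your handling of that point --- using Assumption~\ref{as_mesh1} plus the $C^2$/polygonal regularity of $\Gamma$ to conclude that $\hat\Gamma_K$ is a graph over a face with $C^2$ (or Lipschitz) norm uniformly bounded at unit scale, hence a uniformly Lipschitz family --- is exactly the mechanism used in the literature. One small refinement: rather than invoking an abstract trace theorem on a variable hypersurface inside $\hat B$, it is cleaner to flatten $\hat\Gamma_K$ to a hyperplane via the graph map and apply the standard one-dimensional trace/integration argument in the normal direction; this makes the uniformity of the constant completely explicit. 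But either route closes the argument.
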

We refer to \cite{Guzman2018infsup} for the proof only assuming
$\Gamma$ is Lipschitz. The $H^1$ trace estimate is fundamental in
the penalty-type unfitted finite element methods, as the main tool to
handle the numerical error on the interface, such as
\cite{Hansbo2002unfittedFEM, Wu2012unfitted, Badia2018aggregated,
Burman2015cutfem, Huang2017unfitted, Johansson2013high,
Massjung2012unfitted, Massing2019stabilized}. 
By \eqref{eq_H1trace}, the errors on the interface can be bounded 
by the estimates on elements.  However, this trace estimate
\eqref{eq_H1trace} requires the $H^1$ regularity. For the problem
with the exact solution in $H^s(\div;\Omega_0 \cup \Omega_1)$ or
$H^s(\curl; \Omega_0 \cup \Omega_1)$, applying \eqref{eq_H1trace} to
estimate the numerical errors on the interface will lead to a
suboptimal convergence rate. The optimal convergence needs a higher
regularity assumption that the exact solution is piecewise
$H^{s+1}$-smooth.  see \cite{Liu2020interface, Li2023curl} for
unfitted methods on $H(\div)$- and $H(\curl)$-interface problems.  In
addition, the trace estimate \eqref{eq_H1trace} is not suitable for
the solution of low regularity, such as the solution belongs to the
space $H^s(\Omega_0 \cup \Omega_1)$ with $s < 1$. 

\section{The Problem Setting and Least Squares Functional}
\label{sec_problem}
The model considered in this paper is the linear elasticity interface
problem defined on $\Omega$, which reads: seek the stress $\bm{\sigma}
= (\sigma_{ij})_{d \times d}$ and the displacement $\bu = (u_j)_d$
such that 
\begin{equation}
  \begin{aligned}
    \mA \bm{\sigma} - \be(\bm{u}) &= \bm{0}, && \text{in } \Omega_0 \cup
    \Omega_1, \\
    \nabla \cdot \bm{\sigma} + \bm{f} & = \bm{0}, && \text{in }
    \Omega_0 \cup \Omega_1, \\
    \bm{u} &= \bm{0}, && \text{on } \partial \Omega, \\
    \jumpn{\bm{\sigma}} = \bm{a}, \quad \jump{\bm{u}} &= \bm{b} , &&
    \text{on } \Gamma, \\
  \end{aligned}
  \label{eq_problem}
\end{equation}
where $\bm{f}$ is the source term, and $\bm{a}$, $\bm{b}$ are the jump
conditions on the interface. 
The Lam\'e parameters $\lambda$, $\mu$ are assumed to be piecewise
positive constant functions, 
\begin{equation}
  (\lambda(\bm{x}), \mu(\bm{x})) := \begin{cases}
    (\lambda_0, \mu_0), & \text{in } \Omega_0, \\
    (\lambda_1, \mu_1), & \text{in } \Omega_1, \\
  \end{cases} \quad \lambda_0, \lambda_1, \mu_0, \mu_1 > 0.
  \label{eq_lame}
\end{equation}
The constitutive law is expressed by the linear operator $\mA:
\mb{R}^{d \times d} \rightarrow \mb{R}^{d \times d}$: 
\begin{displaymath}
  \mA \bm{\tau} := \frac{1}{2\mu}\left( \bm{\tau} - \frac{\lambda}{d
  \lambda + 2 \mu} \tr{\bm{\tau}} \bmr{I} \right), \quad \forall
  \bm{\tau} \in \mb{R}^{d \times d},
\end{displaymath}
where $\tr{\cdot}$ denotes the trace operator, and $\bmr{I} :=
(\delta_{ij})_{d \times d}$ is the identity tensor.  The function
$\bm{\varepsilon}(\bm{u})$ denotes the symmetric strain tensor:
\begin{displaymath}
  \bm{\varepsilon}(\bm{u}) = (\varepsilon_{i, j}(\bm{u}))_{d \times
  d}, \quad \varepsilon_{i, j}(\bm{u}) 
  := \frac{1}{2}\left( \frac{\partial u_i}{\partial x_j} +
  \frac{\partial u_j}{\partial x_i} \right), \quad 1 \leq i, j \leq d.
\end{displaymath}
We assume that the interface problem \eqref{eq_problem} admits a
unique solution $(\bsigma, \bu) \in \bSi^s \times \bV^{s+1}$ with $s >
1/2$, where 
\begin{equation}
  \bSi^s := \bH^{s}(\div; \Omega_0 \cup \Omega_1), \quad 
  \bV^{s+1} := \{ \bv \in \bH^{s+1}(\Omega_0 \cup \Omega_1):  \
  \bv|_{\partial \Omega} = \bm{0} \}.
  \label{eq_bSibV}
\end{equation}
For $i = 0, 1$, we let $\bm{\sigma}_i := \bm{\sigma}|_{\Omega_i}$,
$\bm{u}_i := \bm{u}|_{\Omega_i}$.  
We assume that $\bm{\sigma}_i$ and
$\bm{u}_i$ can be extended to the whole domain $\Omega$ in the sense
that there exist $\wt{\bsigma}_i \in \bH^s(\div; \Omega)$ and
$\wt{\bu}_i \in \bH^{s+1}(\Omega)$ such that
$\wt{\bsigma}_i|_{\Omega_i} = \bsigma_i$ with $ \| \wt{\bsigma}_i
\|_{H^s(\div; \Omega)} \leq C \| \bsigma_i \|_{H^s(\div; \Omega_i)}$
and $\wt{\bu}_i|_{\Omega_i} = \bu_i$ with $ \| \wt{\bu}_i
\|_{H^{s+1}(\Omega)} \leq C \| \bu_i \|_{H^{s+1}(\Omega_i)}$.
Consequently, $\bsigma$ and $\bu$ can be decomposed as
\begin{equation}
  \bsigma = \wt{\bsigma}_0 \cdot \chi_0 + \wt{\bsigma}_1 \cdot \chi_1,
  \quad \bu = \wt{\bu}_0 \cdot \chi_0 + \wt{\bu}_1 \cdot \chi_1,
  \label{eq_soldecomp}
\end{equation}
where $\chi_i$ is the characteristic function corresponding to the
domain $\Omega_i$.  We refer to \cite{Hiptmair2010convergence,
Adams2003sobolev, Di2012hitchhiker} for more details about the
extension of Sobolev spaces.
From \eqref{eq_soldecomp}, we formally introduce two projection
operators $\pi_i(i = 0, 1)$ that $\pi_i \bsigma := \wt{\bsigma}_i \in
\bH^s(\div; \Omega)$ ad $\pi_i \bu := \wt{\bu}_i \in
\bH^{s+1}(\Omega)$.

We define an associated least squares fundamental to the interface
problem \eqref{eq_problem}.
Let $\bSi := \bSi^0$ and $\bV := \bV^1$ be the spaces coinciding with
$s = 0$ in \eqref{eq_bSibV}. We define the quadratic functional
$\mJ(\cdot; \cdot)$ by
\begin{equation}
  \mJ(\btau, \bv; \bm{f}, \ba, \bm{b}) := J(\btau, \bv; \bm{f}) +
  B^{\bsigma}(\btau; \ba) + B^{\bu}(\bv; \bb), \quad \forall (\btau,
  \bv) \in \bSi \times \bV,
  \label{eq_mJ}
\end{equation}
where 
\begin{equation}
  \begin{aligned}
    J(\btau, \bv; \bm{f}) &:= \| \mA \btau - \beps(\bv)
    \|_{L^2(\Omega_0 \cup \Omega_1)}^2 + \| \nabla \cdot \btau +
    \bm{f} \|_{L^2(\Omega_0 \cup \Omega_1)}^2, \\
    B^{\bm{\sigma}}(\btau; \ba) &:= \| \jumpn{\btau} -
    \ba \|_{H^{-1/2}(\Gamma)}^2, \quad B^{\bu}(\bv; \bb) := \|
    \jump{\bv} - \bm{b} \|_{H^{1/2}(\Gamma)}^2,
  \end{aligned} 
  \quad \forall (\bm{\sigma}, \bv) \in \bSi \times \bV.
  \label{eq_JBsBu}
\end{equation}
The trace terms $\Bs(\cdot; \cdot)$ and $\Bu(\cdot; \cdot)$ are
well-defined since  $\jumpn{\btau}|_{\Gamma} \in
H^{-1/2}(\Gamma)$ and $\jump{\bv}|_{\Gamma} \in H^{1/2}(\Gamma)$ for
$\forall (\btau, \bv) \in \bSi \times \bV$ from the embedding
theory.
The exact solution $(\bsigma, \bu)$ clearly minimizes the functional
$\mJ(\cdot; \cdot)$ since $\mJ(\bsigma, \bu; \bm{f}, \ba, \bb) = 0$.
In fact, we can prove
that $(\bsigma, \bu)$ is also the unique solution to the minimization
problem $\inf_{(\btau, \bv) \in \bSi \times \bV} \mJ(\btau, \bv;
\bm{f}, \ba, \bb)$. For this purpose, we will give a norm equivalence
property of $\mc{J}(\cdot, \cdot)$, and the norm equivalence is also
crucial for the error analysis in the least squares finite element
method \cite{Bochev1998review}. We notice that the pair of spaces
$\bSi \times \bV$ can be naturally equipped with the norm 
\begin{displaymath}
  \enorm{(\btau, \bv)}^2 :=  \| \btau \|_{H(\div; \Omega_0 \cup
  \Omega_1)}^2 + \| \bv \|_{H^1(\Omega_0 \cup \Omega_1)}^2, \quad
  \forall (\btau, \bv) \in \bSi \times \bV.
\end{displaymath}
The equivalence between $\mc{J}(\cdot; \cdot)$ and
$\enorm{\cdot}$ is given in the following lemma. 
\begin{lemma}
  There exist constants $C$ such that
  \begin{equation}
    \begin{aligned}
      \enorm{(\bm{\tau}, \bm{v})} \leq C \big( \| \mA \bm{\tau} -&
      \bm{\varepsilon}(\bm{v}) \|_{L^2(\Omega_0 \cup \Omega_1)} + \|
      \nabla \cdot \bm{\tau} \|_{L^2(\Omega_0 \cup \Omega_1)} \\
      + &\| \jumpn{\bm{\tau}} \|_{H^{-1/2}(\Gamma)} + \|
      \jump{\bm{v}} \|_{H^{1/2}(\Gamma)} \big) \leq C
      \enorm{(\bm{\tau}, \bm{v})},
    \end{aligned} \quad \forall (\bm{\tau}, \bm{v}) \in \bSi \times
    \bV.
    \label{eq_enormequvi}
  \end{equation}
  \label{le_enormequvi}
\end{lemma}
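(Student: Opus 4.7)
The proof is an equivalence of norms, of which the right-hand inequality is the direct upper bound and the left-hand inequality is the harder coercivity. For the upper bound, each of the four middle quantities is estimated separately. The pointwise estimate $|\mc{A}\bm{\tau}|\leq C|\bm{\tau}|$ is uniform in $\lambda$ since $\lambda/(d\lambda+2\mu)\leq 1/d$, and this together with $\|\bm{\varepsilon}(\bm{v})\|_{L^2}\leq \|\bm{v}\|_{H^1}$ controls the volumetric residual. The divergence term is trivially bounded by $\|\bm{\tau}\|_{H(\div;\Omega_0\cup\Omega_1)}$, and the two interface terms are controlled by the continuity of the normal trace $\bH(\div;\Omega_i)\to \bH^{-1/2}(\partial\Omega_i)$ and of the standard trace $\bH^1(\Omega_i)\to \bH^{1/2}(\partial\Omega_i)$ on each subdomain $\Omega_i$, $i=0,1$.

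For the lower bound I plan to reduce to an interface-free problem by lifting the jump data. First I would construct a displacement lift $\bm{w}\in \bV$ with $\jump{\bm{w}}=\jump{\bm{v}}$ and $\|\bm{w}\|_{H^1(\Omega_0\cup\Omega_1)}\leq C\|\jump{\bm{v}}\|_{H^{1/2}(\Gamma)}$, for instance by inverting the trace into $\bH^1(\Omega_0)$ and extending by zero into $\Omega_1$ (with a minor cut-off near $\partial\Omega$ to respect the outer Dirichlet condition). Next, a stress lift $\bm{\theta}$ realising the prescribed normal jump is built by setting $\bm{\theta}|_{\Omega_0}=\bm{0}$ and using the surjectivity of the normal trace $\bH(\div;\Omega_1)\to\bH^{-1/2}(\partial\Omega_1)$ to produce a tensor field on $\Omega_1$ with normal trace $-\jumpn{\bm{\tau}}$ on $\Gamma$ and $\bm{0}$ on $\partial\Omega$, yielding $\|\bm{\theta}\|_{H(\div;\Omega_0\cup\Omega_1)}\leq C\|\jumpn{\bm{\tau}}\|_{H^{-1/2}(\Gamma)}$.

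Setting $\bm{\tau}':=\bm{\tau}-\bm{\theta}$ and $\bm{v}':=\bm{v}-\bm{w}$ annihilates both jumps, so $\bm{\tau}'\in \bH(\div;\Omega)$ and $\bm{v}'$ is globally $H^1$ on $\Omega$ with vanishing trace on $\partial\Omega$. I would then invoke the classical coercivity of the $L^2$ least-squares functional for the stress-displacement formulation of linear elasticity on a single (interface-free) domain, in the spirit of \cite{Cai2004least, Bramble2001least}, which gives
\[
\|\bm{\tau}'\|_{H(\div;\Omega)} + \|\bm{v}'\|_{H^1(\Omega)} \leq C\bigl(\|\mc{A}\bm{\tau}' - \bm{\varepsilon}(\bm{v}')\|_{L^2(\Omega)} + \|\nabla\cdot\bm{\tau}'\|_{L^2(\Omega)}\bigr).
\]
Expanding $\mc{A}\bm{\tau}'-\bm{\varepsilon}(\bm{v}')=(\mc{A}\bm{\tau}-\bm{\varepsilon}(\bm{v}))-(\mc{A}\bm{\theta}-\bm{\varepsilon}(\bm{w}))$ and $\nabla\cdot\bm{\tau}'=\nabla\cdot\bm{\tau}-\nabla\cdot\bm{\theta}$, applying the lift bounds, and combining via $\enorm{(\bm{\tau},\bm{v})}\leq \enorm{(\bm{\tau}',\bm{v}')}+\enorm{(\bm{\theta},\bm{w})}$ then recovers the desired lower bound.

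The main technical obstacle I foresee is the interface-free coercivity step. Korn's first inequality provides $\|\bm{v}'\|_{H^1(\Omega)}\leq C\|\bm{\varepsilon}(\bm{v}')\|_{L^2(\Omega)}$ thanks to the homogeneous boundary condition, and the identity $\bm{\tau}'=2\mu\,\mc{A}\bm{\tau}'+\lambda\,\tr{\mc{A}\bm{\tau}'}\bmr{I}$ combined with the residual $\mc{A}\bm{\tau}'-\bm{\varepsilon}(\bm{v}')$ reconstructs $\bm{\tau}'$ from $\bm{\varepsilon}(\bm{v}')$ and the residual in $L^2$, while $\|\nabla\cdot\bm{\tau}'\|_{L^2}$ is already present on the right. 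By comparison the lift constructions and the upper bound are routine bookkeeping.
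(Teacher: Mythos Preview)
Your proposal is correct and follows the same overall strategy as the paper: lift the two jumps to reduce to the interface-free case, then invoke the single-domain coercivity from \cite{Cai2004least}. The specific lift constructions differ only cosmetically. The paper builds both lifts on $\Omega_0$ by solving a Neumann Laplace problem (setting $\wt{\bm{\tau}}=\bm{\tau}+\nabla\bm{w}_0$) and a Dirichlet Laplace problem (setting $\wt{\bm{v}}=\bm{v}+\bm{w}_1$), each extended by zero to $\Omega_1$; you instead use abstract surjectivity of the trace and normal-trace maps, placing your stress lift in $\Omega_1$. Either choice works. Your cut-off near $\partial\Omega$ is unnecessary, since $\Omega_0\Subset\Omega$ means extension by zero from $\Omega_0$ already vanishes on $\partial\Omega$.

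One caution about your final paragraph. The direct reconstruction $\bm{\tau}'=2\mu\,\mc{A}\bm{\tau}'+\lambda\,\tr{\mc{A}\bm{\tau}'}\bmr{I}$ yields a constant proportional to $\lambda$, which would destroy the $\lambda$-robustness that the lemma (and the whole paper) explicitly claims. The actual argument in \cite{Cai2004least} is more delicate: one splits $\bm{\tau}'$ into its deviatoric and trace parts and controls $\|\tr{\bm{\tau}'}\|_{L^2}$ via $\nabla\cdot\bm{\tau}'$ and a suitable inf--sup or Stokes-type estimate, rather than inverting $\mc{A}$ directly. Since you ultimately cite that result rather than re-prove it, your argument stands, but you should be aware that the sketch you offer for this step would not deliver the needed uniformity in $\lambda$.
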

\begin{proof}
  By the definition of $\mA$, we have that 
  \begin{equation}
    \| \mA \bm{\tau} \|_{L^2(\Omega)}^2 = \frac{1}{(2\mu)^2} \left( \|
    \bm{\tau} \|_{L^2(\Omega)}^2 - \frac{\lambda( d \lambda + 4
    \mu)}{(d \lambda + 2 \mu)^2} \| \tr{\bm{\tau}} \|_{L^2(\Omega)}^2
    \right) \leq  \frac{1}{(2\mu)^2} \| \bm{\tau} \|_{L^2(\Omega)}^2
    \leq C  \| \bm{\tau} \|_{L^2(\Omega)}^2.
    \label{eq_Abound}
  \end{equation}
  The term $\| \mA \btau - \beps(\bv) \|_{L^2(\Omega_0 \cup
  \Omega_1)}$ can be bounded by $\enorm{(\btau, \bv)}$, following from
  the triangle inequality and \eqref{eq_Abound}. From the embedding
  theory, we know that
  \begin{displaymath}
    \| \jumpn{\btau} \|_{H^{-1/2}(\Gamma)} \leq \| \un \cdot
    \btau|_{\Omega_0} \|_{H^{-1/2}(\Gamma)} +  \| \un \cdot
    \btau|_{\Omega_1} \|_{H^{-1/2}(\Gamma)} \leq C \| \btau
    \|_{H(\div; \Omega_0 \cup \Omega_1)}.
  \end{displaymath}
  Similarly, there holds $\|\jump{\bv} \|_{H^{1/2}(\Gamma)} \leq C \|
  \bv \|_{H^1(\Omega_0 \cup \Omega_1)}$. The upper bound in
  \eqref{eq_enormequvi} is reached.

  From \cite[Theorem 3.1]{Cai2004least}, the lower bound in
  \eqref{eq_enormequvi} holds for $\btau \in \bH(\div; \Omega)$ and
  $\bv \in \bH^1(\Omega)$, i.e. the lower bound holds for $\btau$
  and $\bv$ satisfying $\jumpn{\btau}|_{\Gamma} = \jump{\bv}|_{\Gamma}
  = \bm{0}$. For the general case, we construct two auxiliary
  functions $\wt{\btau}$ and $\wt{\bv}$ to prove
  \eqref{eq_enormequvi}.  
  Consider the elliptic problems
  \begin{displaymath}
    -\Delta \bm{w}_0 = \bm{0}, \quad \text{in } \Omega_0, \quad
    \partial_{\un} \bm{w}_0 = -\jumpn{\bm{\tau}}|_{\Gamma}, \quad \text{on }
    \Gamma,
  \end{displaymath}
  and 
  \begin{displaymath}
    -\Delta \bm{w}_1 = \bm{0}, \quad \text{in } \Omega_0,
    \quad \bm{w}_1 = -\jump{\bm{v}}|_{\Gamma}, \quad \text{on } \Gamma.
  \end{displaymath}
  Since $\jumpn{\btau}|_{\Gamma} \in \bH^{-1/2}(\Gamma)$ and
  $\jump{\bv}|_{\Gamma} \in
  H^{1/2}(\Gamma)$, we can know that both problems have a unique
  solution in $\bH^1(\Omega_0)$ with $\| \bw_0 \|_{H^1(\Omega_0)} \leq
  C \| \jumpn{ \btau} \|_{H^{-1/2}(\Gamma)} $ and  $\| \bw_1
  \|_{H^1(\Omega_0)} \leq C \| \jump{ \bv} \|_{H^{1/2}(\Gamma)}$.  We
  then extend $\bm{w}_0$ and $\bm{w}_1$ to the domain $\Omega$ by
  zero. Let $\wt{\btau} := \btau + \nabla \bw_0$ and $\wt{\bv} := \bv
  + \bw_1$, we have that $\jumpn{\wt{\btau}}|_{\Gamma} =
  \jump{\wt{\bv}}|_{\Gamma} = \bm{0}$, which allows us to derive that
  \begin{align*}
    \enorm{(\bm{\tau}, \bm{v})} \leq  \enorm{(\wt{\bm{\tau}},
    \wt{\bm{v}})} + \enorm{(\bm{\tau} - \wt{\bm{\tau}}, \bm{v} -
    \wt{\bm{v}})} \leq \enorm{(\wt{\bm{\tau}}, \wt{\bm{v}})} + C ( \|
    \jumpn{\bm{\tau}} \|_{H^{-1/2}(\Gamma)} + \| \jump{\bm{v}}
    \|_{H^{1/2}(\Gamma)}),
  \end{align*}
  and 
  \begin{align*}
    &\enorm{(\wt{\bm{\tau}}, \wt{\bm{v}})}  \leq C ( \| \mA
    \wt{\bm{\tau}} - \bm{\varepsilon}(\wt{\bm{v}}) \|_{L^2(\Omega_0
    \cup \Omega_1)} + \| \nabla \cdot \wt{\bm{\tau}} \|_{L^2(\Omega_0
    \cup \Omega_1)}) \\
    & \leq C (  \| \mA {\bm{\tau}} - \bm{\varepsilon}({\bm{v}})
    \|_{L^2(\Omega_0 \cup \Omega_1)} + \| \nabla \cdot {\bm{\tau}}
    \|_{L^2(\Omega_0 \cup \Omega_1)}) + C (\| \bm{\tau} -
    \wt{\bm{\tau}} \|_{H(\div; \Omega_0 \cup \Omega_1)} + \| \bm{v} -
    \wt{\bm{v}} \|_{H^1(\Omega_0 \cup \Omega_1)}) \\
    & \leq C (  \| \mA {\bm{\tau}} - \bm{\varepsilon}({\bm{v}})
    \|_{L^2(\Omega_0 \cup \Omega_1)} + \| \nabla \cdot {\bm{\tau}}
    \|_{L^2(\Omega_0 \cup \Omega_1)} + \|  \jumpn{\bm{\tau}}
    \|_{H^{-1/2}(\Gamma)} + \| \jump{\bm{v}} \|_{H^{1/2}(\Gamma)}).
  \end{align*}
  Combining the above two estimates leads to the lower bound in
  \eqref{eq_enormequvi}, which completes the proof.
\end{proof}
It is noticeable that the generic constants in \eqref{eq_enormequvi}
are independent of $\lambda$, which ensure the proposed methods are
robust when $\lambda \rightarrow \infty$.

From the definition to $\mJ(\cdot; \cdot)$, it can be observed that
\begin{displaymath}
  \mJ(\btau, \bv; \bm{0}, \bm{0}, \bm{0}) = \| \mA \btau - \beps(\bv)
  \|_{L^2(\Omega_0 \cup \Omega_1)}^2 + \| \nabla \cdot \btau
  \|_{L^2(\Omega_0 \cup \Omega_1)}^2 + \| \jumpn{\btau}
  \|_{H^{-1/2}(\Gamma)}^2 + \| \jump{\bv} \|_{H^{1/2}(\Gamma)}^2.
\end{displaymath}
Therefore, $\mJ(\btau, \bv; \bm{0}, \bm{0}, \bm{0})$ is equivalent to
$\enorm{(\btau, \bv)}^2$.
Let $(\btau, \bv) \in \bSi \times
\bV$ be the solution to the problem \eqref{eq_problem} with $\bm{f} =
\ba = \bb = \bm{0}$, which implies that $\mJ(\btau, \bv;
\bm{0}, \bm{0}, \bm{0}) = 0$. From the equivalence
\eqref{eq_enormequvi}, we immediately find that $\btau$ and $\bv$ are
zero functions. 
Hence, the minimization problem $\inf_{(\btau, \bv) \in \bSi \times
\bV} \mJ(\btau, \bv; \bm{f}, \ba, \bb)$ admits a unique solution.

\section{The Least Squares Finite Element Method}
\label{sec_method}
In this section, we present the numerical schemes for solving the
elasticity interface problem \eqref{eq_problem}.  We begin by
introducing the approximation finite element spaces.  
For the mesh $\MThi(i = 0, 1)$, we define $\bSi_{h, i}^m \subset
\bH(\div; \Ohi)$ as the $H(\div)$-conforming tensor-valued piecewise
polynomial space of degree $m$. As the definition of $\bH(\div;
\Ohi)$, each column of functions in $\bSi_{h, i}^m$ belongs to 
$H(\div; \Ohi)$, i.e. the $\BDM_m$ space or the $\RT_m$ space.
In this paper, the schemes are established $\bSi_{h, i}^m$ is the
$\BDM_m$ space, and the methods and the analysis can be extended to
the $\RT_m$ space without any difficulty. 
We define $\bV_{h,0}^m \subset \bH^1(\Oho)$ as the vector-valued $C^0$
finite element space of degree $m$, and define $\bV_{h,1}^m \subset
\bH^1(\Ohl)$ as the $C^0$ finite element space with zero trace on
$\partial \Omega$, i.e. $\bV_{h, 1}^m|_{\partial \Omega} = \bm{0}$.
We define the extended approximation spaces $\bSi_h^m :=
\bSi_{h, 0}^m \cdot \chi_0 + \bSi_{h, 1}^m \cdot \chi_1$ and $\bV_h^m
:= \bV_{h,0}^m \cdot \chi_0 + \bV_{h, 1}^m \cdot \chi_1$ for the
stress and the displacement, respectively, where the characteristic
function $\chi_i$ is defined in \eqref{eq_soldecomp}. Then, any
$\btau_h \in \bSi_h^m$ and any $\bv_h \in \bV_h^m$ admit a unique
decomposition that 
\begin{equation}
  \btau_h = \btau_{h, 0} \cdot \chi_0 + \btau_{h, 1} \cdot \chi_1,
  \quad  \bv_h = \bv_{h, 0} \cdot \chi_0 + \bv_{h, 1} \cdot \chi_1,
  \label{eq_sfemcomp}
\end{equation}
where $\btau_{h, i} \in \bSi_{h, i}^m$, $\bv_{h, i} \in \bV_{h, i}^m$.
As the decomposition \eqref{eq_soldecomp}, we
formally let $\pi_i(i = 0, 1)$ still be the projection operator such
that $\pi_i \btau_h := \btau_{h, i} \in \bSi_{h, i}^m(\forall \btau_h
\in \bSi_h^m)$ and $\pi_i
\bv_h := \bv_{h, i} \in \bV_{h, i}^m(\forall \bv_h \in \bV_h^m)$.

The approximation spaces are conforming in the sense
that $\bSi_h^m \subset \bSi$ and $\bV_h^m \subset \bV$. A natural idea
to seek numerical solutions is minimizing the functional $\mJ(\cdot;
\cdot)$ over the discrete conforming spaces, that is 
$\inf_{(\bm{\tau}_h, \bm{v}_h) \in \bSi_h^m \times \bV_h^m}
\mJ(\bm{\tau}_h, \bm{v}_h; \bm{f}, \bm{a}, \bm{b})$, which, however,
is not a easy task. 
It is impossible to directly compute the trace terms
$B^{\bsigma}(\cdot; \cdot)$ and $B^{\bu}(\cdot; \cdot)$ in
\eqref{eq_JBsBu}, and the minimization problem cannot be
readily rewritten into a variational problem by the means of
Euler-Lagrange equation because of the presence of $\| \cdot
\|_{H^{1/2}(\Gamma)}$ and $\| \cdot \|_{H^{-1/2}(\Gamma)}$.
The main idea of the proposed method is to apply some computational
trace terms to replace $B^{\bsigma}(\cdot; \cdot)$ and $B^{\bu}(\cdot;
\cdot)$ in $\mJ(\cdot; \cdot)$ to define new functionals. 
The numerical approximations are then obtained by minimizing the new
functionals.

For the convergence analysis, we define the spaces $\bSi_h := \bsigma
+ \bSi_h^m$ and $\bV_h := \bu + \bV_h^m$, which consist of the exact
solution and all finite element functions, respectively. 
Here, we introduce the ghost penalty forms for the spaces $\bSi_h$ and
$\bV_h$ from the forms \eqref{eq_shi}. 
By the definition of $\pi_i(i = 0, 1)$, there holds $\pi_i \btau_h
\in \mb{L}^2(\Ohi)(\forall \btau_h \in \bSi_h)$ and $\pi_i \bv_h \in
\bL^2(\Ohi)(\forall \bv_h \in \bV_h)$. 
From \eqref{eq_shi}, we define the forms $\mc{S}_h^m(\cdot,
\cdot)$ and $\mc{G}_h^m(\cdot, \cdot)$ for $\bSi_h$ and $\bV_h$,
respectively, as
\begin{displaymath}
  \begin{aligned}
    \Shm(&\btau_h, \brho_h)  := s_{h, 0}^{m}(\pi_0 \btau_h, \pi_0
    \brho_h) + s_{h, 1}^{m}(\pi_1 \btau_h, \pi_1 \brho_h), 
    \quad
    \forall \btau_h, \brho_h \in \bSi_h,
  \end{aligned}
\end{displaymath}
with the induced seminorm $\shsnorm{\btau_h}^2 := \Shm(\btau_h,
\btau_h)(\forall \btau_h \in \bSi_h)$, and 
\begin{displaymath}
  \begin{aligned}
    \Ghm(&\bv_h, \bw_h) := s_{h, 0}^{m}(\pi_0 \bv_h, \pi_0 \bw_h) +
    s_{h, 1}^{m}(\pi_1 \bv_h, \pi_1 \bw_h), \quad \forall \bv_h, \bw_h
    \in \bV_h,
  \end{aligned}
\end{displaymath}
with the induced seminorm $\shunorm{\bv_h}^2 := \Ghm(\bv_h,
\bv_h)(\forall \bv_h \in \bV_h)$. From \textbf{P1} - \textbf{P2}, 
we have the following estimates, 
\begin{align}
  \| \pi_0 \btau_h \|_{L^2(\Oho)} +  \| \pi_1 \btau_h \|_{L^2(\Ohl)}
  &\leq C ( \| \btau_h \|_{L^2(\Omega)} + \shsnorm{\btau_h}) 
  \label{eq_btL2bound} \\
  & \leq C(  \| \pi_0 \btau_h \|_{L^2(\Oho)} + \| \pi_1 \btau_h
  \|_{L^2(\Ohl)}), \quad \forall \btau_h \in \bSi_h^m, \nonumber \\
  \| \pi_0 \bv_h \|_{L^2(\Oho)} + \| \pi_1 \bv_h \|_{L^2(\Ohl)} 
  & \leq C ( \| \bv_h \|_{L^2(\Omega)} + \shunorm{\bv_h}) 
  \label{eq_buL2bound} \\
  & \leq C(  \| \pi_0 \bv_h \|_{L^2(\Oho)} + \| \pi_1 \bv_h
  \|_{L^2(\Ohl)}), \quad \forall \bv_h \in \bV_h^m, \nonumber 
\end{align}
and
\begin{displaymath}
  \begin{aligned}
    \shsnorm{\btau} & \leq Ch^t \| \btau \|_{H^{s}(\Omega)}, \quad
    \forall \btau \in
    \mb{H}^{s+1}(\Omega), \quad t = \min(m, s), \\
    \shunorm{\bv} & \leq Ch^t \| \bv \|_{H^{s+1}(\Omega)}, \quad
    \forall \bv \in
    \bH^{s+1}(\Omega), \quad t = \min(m + 1, s+1). \\
  \end{aligned}
\end{displaymath}

\subsection{The $L^2$ norm Least Squares Finite Element Method for $s
\geq 1$}
\label{subsec_L2norm}
We present the numerical method that defines the least squares
functional using only $L^2$ norms. 
This method requires the exact solution has the regularity $s
\geq 1$. As commented earlier, we will bound the trace terms
$B^{\bsigma}(\cdot; \cdot)$ and $B^{\bu}(\cdot; \cdot)$ by applying
stronger and easily-computational norms. 
In this case, the space $\mb{H}^s(\Omega_i)$ continuously embeds into
$\mb{L}^2(\Gamma)$, which indicates that
$\jumpn{\btau}|_{\Gamma} = \un \cdot ((\btau|_{\Omega_0} -
\btau|_{\Omega_1}))|_{\Gamma} \in \bL^2(\Gamma)$ for $\forall \btau
\in \bSi^s$.
From the definition \eqref{eq_Hm12norm}, one can find that
\begin{equation}
  \| \jumpn{\btau}\|_{H^{-1/2}(\Gamma)} \leq C \|\jumpn{ \btau}
  \|_{L^2(\Gamma)}, \quad \forall \btau \in \bSi^s.
  \label{eq_Hm12sbound}
\end{equation}
For the displacement variable, we also give a stronger norm for $\|
\cdot \|_{H^{1/2}(\Gamma)}$. 
We first consider the case that $\Gamma$ is $C^2$-smooth. 
By the embedding relationship $\bH^{s+1}(\Omega_i) \hookrightarrow
\bH^1(\Gamma)$, we know that $\jump{\bv}|_{\Gamma} \in \bH^1(\Gamma)$
for $\forall \bv \in \bV^{s+1}$. 
Here the norm $ \| \bv \|_{H^1(\Gamma)}^2 = \| \bv \|_{L^2(\Gamma)}^2
+ \| \nabla_{\Gamma} \bv \|_{L^2(\Gamma)}^2$,
where $\nabla_{\Gamma}$ denotes the tangential gradient on the
interface \cite{Dziuk2013finite}. 
The tangential gradient $\nabla_{\Gamma} \bv (\bx)$ for $\forall \bv \in
\bH^1(\Gamma)$ only depends on values of $\bv$ on $\Gamma \cap U$,
where $U$ is a neighbourhood of $\bx$. 
For $i = 0, 1$, we notice that the any finite element function
$\bv_{h, i} \in \bV_{h, i}^m(i = 0, 1)$ is continuous and piecewise
smooth on $\Gamma$, i.e.  $\bv_{h, i}|_{\Gamma} \in C^0(\Gamma)$ and
$\bv_{h, i}|_{\Gamma_K} \in C^2(\Gamma_K)(\forall K \in \MThG)$,
which gives that $\bv_{h,i} \in \bH^1(\Gamma)$. 
Thus, we have that
$\jump{\bv_h}|_{\Gamma} = (\pi_0\bv_{h})|_{\Gamma} - (\pi_1
\bv_{h})|_{\Gamma} \in \bH^1(\Gamma)$ for $\forall \bv_h \in
\bV_h^m$. Consequently, we conclude that 
$\jump{\bv_h}|_{\Gamma} \in \bH^1(\Gamma)$ for $\forall \bv_h \in
\bV_h$.
By the Sobolev interpolation inequality \cite[Theorem
7.4]{Lions1972nonhomogeneous} and the Cauchy-Schwarz inequality,
we have that
\begin{equation}
  \begin{aligned}
    \| \jump{\bv} \|_{H^{1/2}(\Gamma)}^2 \leq C \| \jump{\bv}
    \|_{L^2(\Gamma)} \| \jump{ \bv} \|_{H^1(\Gamma)} \leq C (h^{-1} \|
    \jump{\bv} \|_{L^2(\Gamma)}^2 + h \|  \jump{\nabla_{\Gamma} \bv}
    \|_{L^2(\Gamma)}^2), \quad \forall \bv \in \bV_h.
  \end{aligned}
  \label{eq_H1uboundC2}
\end{equation}
The right hand side of \eqref{eq_H1uboundC2} only involves the $L^2$
norms, and can be easily computed.

For the case that $\Gamma$ is polygonal (polyhedral), we let
$\Gamma_j(1 \leq j \leq J)$ denote the sides of $\Gamma$, where 
every side $\Gamma_j$ is a line segment (polygon). Since $\Gamma$ has
corners, the
estimate \eqref{eq_H1uboundC2} will be modified in a piecewise manner. 
From the embedding 
theory \cite[Theorem 1.5.2.1]{Grisvard2011elliptic}, we have that
$\bH^{s+1}(\Omega_i) \hookrightarrow \Pi_{j = 1}^J \bH^1(\Gamma_j)(i =
0, 1)$, which brings us that $\jump{\bv}|_{\Gamma_j} \in
\bH^1(\Gamma_j)(1 \leq j \leq J)$ for $\forall \bv \in \bV^{s+1}$.
Moreover, any $\bv_{h, i}^m \in \bV_{h, i}^m(i = 0, 1)$ is continuous
and piecewise smooth on each $\Gamma_j$. 
We can know that $(\pi_i \bv_h)|_{\Gamma_j} \in \bH^1(\Gamma_j)$
and $\jump{\bv_{h}}|_{\Gamma_j} \in \bH^1(\Gamma_j)$ for $\forall
\bv_{h} \in \bV_{h}^m$.  Similar to \eqref{eq_H1uboundC2}, the norm
$\| \cdot \|_{H^{1/2}(\Gamma)}$ can be bounded by
\begin{align}
  &\| \jump{\bv} \|_{H^{1/2}(\Gamma)}^2  \leq C \sum_{j = 1}^J \|
  \jump{\bv} \|_{H^{1/2}(\Gamma_j)}^2  \leq C \sum_{j = 1}^J \|
  \jump{\bv} \|_{L^2(\Gamma_j)} \| \jump{\bv} \|_{H^1(\Gamma_j)} 
  \label{eq_H1uboundpoly} \\
  \leq C \sum_{j = 1}^J (h^{-1} &\| \jump{\bv} \|_{L^2(\Gamma_j)}^2
  + h \| \jump{ \nabla_{\Gamma} \bv} \|_{L^2(\Gamma_j)}^2 ) \leq C
  h^{-1} \| \jump{\bv} \|_{L^2(\Gamma)}^2 + C h \sum_{j = 1}^J \|
  \jump{\nabla_{\Gamma} \bv} \|_{L^2(\Gamma_j)}^2,
  \quad\forall \bv \in \bV_h.\nonumber 
\end{align}
The second inequality follows from the Sobolev interpolation
inequality on each $\Gamma_j$. Compared with \eqref{eq_H1uboundC2}, 
the norm for the tangential gradient on $\Gamma$ in
\eqref{eq_H1uboundpoly} can be understood in a piecewise manner, 
i.e. we can formally let $\| \nabla_\Gamma \cdot \|_{L^{2}(\Gamma)}^2
:= \sum_{j =1}^J \| \nabla_\Gamma \cdot \|_{L^{2}(\Gamma_j)}^2$ for
the case $\Gamma$ is polygonal.  Then, the upper bound of the estimate
\eqref{eq_H1uboundpoly} will look the same as the upper bound of
\eqref{eq_H1uboundC2}.  To simplify the representation, we use the
notation consistent with the case that $\Gamma$ is $C^2$.

Now, we define the discrete least squares functional $\mJ_h(\cdot;
\cdot)$ that only involves the $L^2$ norms by 
\begin{equation}
  \begin{aligned}
    \mJ_h(\btau_h, \bv_h; \bm{f}, &\bm{a}, \bm{b}) :=
    J(\btau_h, \bv_h; \bm{f}) \\
    & + \Bs_h(\btau_h; \ba) + \Bu_h(\bv_h; \bb) +
    \shsnorm{\btau_h}^2 + \shunorm{\bv_h}^2, \quad \forall
    (\bm{\tau}_h, \bm{v}_h) \in \bSi_h \times \bV_h, 
  \end{aligned}
  \label{eq_mJh}
\end{equation}
where $J(\cdot; \cdot)$ is defined in \eqref{eq_JBsBu}, and
\begin{equation}
  \Bs_h(\btau_h; \ba) := \| \jumpn{\btau_h} - \ba
  \|_{L^2(\Gamma)}^2, \quad \Bu_h(\bv_h; \bm{b}) :=  h^{-1} \|
  \jump{\bm{v}_h} - \bm{b} \|_{L^2(\Gamma)}^2 + h \|
  \jump{\nabla_{\Gamma}\bm{v}_h } - \nabla_{\Gamma} \bm{b}
  \|_{L^2(\Gamma)}^2.
  \label{eq_BshBuh}
\end{equation}
The last two seminorms in \eqref{eq_mJh} are applied to guarantee the
uniform upper bound of the condition number of the resulting linear
system.

The numerical solutions are sought by solving the minimization problem
\begin{equation}
  \inf_{(\btau_h, \bv_h) \in \bSi_h^m \times \bV_h^m} \mJ_h(\btau_h,
  \bv_h; \bm{f}, \ba, \bb).
  \label{eq_minmJh}
\end{equation}
Since all terms in $\mJ_h(\cdot; \cdot)$ are defined with $L^2$ norms,
the problem  \eqref{eq_minmJh} can be solved by writing the
corresponding Euler-Lagrange equation, which reads:
seek $(\bsigma_h, \bu_h) \in \bSi_h^m \times \bV_h^m$ such that 
\begin{equation}
  a_{\mJ_h}(\bsigma_h, \bu_h; \btau_h, \bv_h) =
  l_{\mJ_h}(\btau_h, \bv_h), \quad \forall (\btau_h,
  \bm{v}_h) \in \bSi_h^m \times \bV_h^m,
  \label{eq_amJh}
\end{equation}
where the forms $a_{\mJ_h}(\cdot; \cdot)$ and $l_{\mJ_h}(\cdot)$ are
defined as 
\begin{displaymath}
  \begin{aligned}
    a_{\mJ_h}(\bm{\tau}_h, \bm{v}_h; \bm{\rho}_h, \bm{w}_h)& :=  (\mA
    \bm{\tau}_h - \bm{\varepsilon}(\bm{v}_h), \mA \bm{\rho}_h -
    \bm{\varepsilon}(\bm{w}_h))_{L^2(\Omega_0 \cup \Omega_1)} +
    (\nabla \cdot \bm{\tau}_h, \nabla \cdot \bm{\rho}_h)_{L^2(\Omega_0
    \cup \Omega_1)} \\
    + & (\jumpn{\bm{\tau}_h}, \jumpn{\bm{\rho}_h} )_{L^2(\Gamma)} +
    h^{-1} (\jump{\bm{v}_h}, \jump{\bm{w}_h} )_{L^2(\Gamma)}+ h
    (\jump{\nabla_{\Gamma} \bm{v}_h}, \jump{\nabla_{\Gamma}
    \bm{w}_h})_{L^2(\Gamma)} \\
    + & \Shm(\bm{\tau}_h, \bm{\rho}_h) + \Ghm(\bm{v}_h, \bm{w}_h), \\
  \end{aligned}
\end{displaymath}
and 
\begin{displaymath}
  \begin{aligned}
    l_{\mJ_h}(\bm{\tau}_h, \bm{v}_h) := -(\nabla \cdot \bm{\tau}_h,
    \bm{f})_{L^2(\Omega_0 \cup \Omega_1)} + (\jumpn{\bm{\tau}_h},
    \bm{a})_{L^2(\Gamma)} + h(\jump{\bm{v}_h}, \bm{b})_{L^2(\Gamma)} +
    h^{-1} (\jump{\nabla_{\Gamma} \bm{v}_h}, \nabla_{\Gamma}
    \bm{b})_{L^2(\Gamma)}.
  \end{aligned}
\end{displaymath}
The convergence analysis will be derived under the Lax-Milgram
framework. We are aiming to show that $a_{\mJ_h}(\cdot; \cdot)$ is
bounded and coercive and satisfies a weakened Galerkin orthogonality
property.  

We introduce the energy norm $\ehnorm{\cdot}$ by 
\begin{displaymath}
  \begin{aligned}
    \ehnorm{(\bm{\tau}_h, \bm{v}_h)}^2 &:= \| \bm{\tau}_h \|_{H(\div;
    \Omega_0 \cup \Omega_1)}^2 + \| \bm{v}_h \|_{H^1(\Omega_0 \cup
    \Omega_1)}^2 + \| \jumpn{\bm{\tau}_h} \|_{L^2(\Gamma)}^2 \\
    + & h^{-1} \| \jump{\bv_h} \|_{L^2(\Gamma)}^2 + h \|
    \jump{\nabla_{\Gamma} \bv_h }\|_{L^2(\Gamma)}^2 +
    \shsnorm{\btau_h}^2 + \shunorm{\bv_h}^2, \quad \forall
    (\bm{\tau}_h, \bm{v}_h) \in \bSi_h \times \bV_h.
  \end{aligned}
\end{displaymath}
By \eqref{eq_Hm12sbound} - \eqref{eq_H1uboundpoly}, we have that 
\begin{equation}
  \Bs(\btau_h; \bm{0}) = \| \jumpn{\btau_h} \|_{H^{-1/2}(\Gamma)}^2
  \leq C \Bs_h(\btau_h;
  \bm{0}), \quad \forall \btau_h \in
  \bSi_h,
  \label{eq_Bstauh}
\end{equation}
and
\begin{equation}
  \begin{aligned}
    \Bu(\bv_h; \bm{0}) = \| \jump{\bv_h} \|_{H^{1/2}(\Gamma)}^2 
    \leq  C \Bu_h(\bv_h; \bm{0}),
    \quad \forall \bv_h \in \bV_h.
  \end{aligned}
  \label{eq_Buvh}
\end{equation}
Combining the definitions to $\mJ(\cdot; \cdot)$ and $\mJ_h(\cdot;
\cdot)$ and the above two estimates, there holds
\begin{displaymath}
  \mc{J}(\btau_h, \bv_h; \bm{0}, \bm{0}, \bm{0}) \leq C \mJ_h(\btau_h,
  \bv_h; \bm{0}, \bm{0}, \bm{0}) = a_{\mJ_h}(\btau_h, \bv_h; \btau_h,
  \bv_h), \quad \forall (\btau_h, \bv_h) \in
  \bSi_h^m \times \bV_h^m.
\end{displaymath}
From Lemma \ref{le_enormequvi}, we find that 
\begin{equation}
  \begin{aligned}
    \| \btau_h \|_{H(\div; \Omega_0 \cup \Omega_1)}^2 +  \| \bv_h
    \|_{H^1(\Omega_0 \cup \Omega_1)}^2 
    & \leq C a_{\mJ_h}(\bm{\tau}_h, \bm{v}_h; \bm{\tau}_h, \bm{v}_h),
    \quad \forall (\btau_h, \bv_h) \in \bSi_h^m \times \bV_h^m.
  \end{aligned}
  \label{eq_amJhlower}
\end{equation}
The boundedness and the coercivity of the bilinear form $a_{\mJ_h}(\cdot;
\cdot)$ directly follows from the Cauchy-Schwarz inequality, the
estimate \eqref{eq_Abound}, and the estimate \eqref{eq_amJhlower}.
\begin{lemma}
  There exist constants $C$ such that 
  \begin{align}
    a_{\mJ_h}(\btau_h, \bv_h; \brho_h, \bw_h) & \leq C
    \ehnorm{(\btau_h, \bv_h)} \ehnorm{(\brho_h, \bw_h)}, && \forall
    (\btau_h, \bv_h), (\brho_h, \bw_h) \in \bSi_h \times \bV_h,
    \label{eq_amJhbound} \\
    a_{\mJ_h}(\btau_h, \bv_h; \btau_h, \bv_h) & \geq C \ehnorm{ (
    \btau_h, \bv_h)}^2, && \forall (\btau_h, \bv_h) \in \bSi_h^m
    \times \bV_h^m.
    \label{eq_amJhcoer}
  \end{align}
  \label{le_amJhbc}
\end{lemma}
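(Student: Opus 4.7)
The plan is to establish both the upper (boundedness) and lower (coercivity) bounds by expanding $a_{\mJ_h}(\cdot;\cdot)$ term by term and matching each contribution against a component of $\ehnorm{\cdot}$. The heavy lifting has essentially already been done in the preceding paragraph of the section, so the proof is mostly a bookkeeping exercise.

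\textbf{Boundedness.} I would apply the Cauchy--Schwarz inequality to each of the six bilinear pairings that make up $a_{\mJ_h}(\btau_h,\bv_h;\brho_h,\bw_h)$. The four interface and ghost-penalty pairings, namely $(\jumpn{\btau_h},\jumpn{\brho_h})_{L^2(\Gamma)}$, $h^{-1}(\jump{\bv_h},\jump{\bw_h})_{L^2(\Gamma)}$, $h(\jump{\nabla_\Gamma\bv_h},\jump{\nabla_\Gamma\bw_h})_{L^2(\Gamma)}$, $\Shm(\btau_h,\brho_h)$ and $\Ghm(\bv_h,\bw_h)$, produce pairwise products of the corresponding squares appearing in $\ehnorm{\cdot}$ directly. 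The divergence pairing is controlled by $\|\nabla\cdot\btau_h\|_{L^2(\Omega_0\cup\Omega_1)}\|\nabla\cdot\brho_h\|_{L^2(\Omega_0\cup\Omega_1)}$, which sits inside the $H(\div)$ contribution of $\ehnorm{\cdot}$. The only term that needs a small extra step is the constitutive residual: I would use the triangle inequality together with the pointwise bound $\|\mA\btau_h\|_{L^2}\le C\|\btau_h\|_{L^2}$ from \eqref{eq_Abound} and $\|\beps(\bv_h)\|_{L^2}\le C\|\bv_h\|_{H^1}$ to conclude that $(\mA\btau_h-\beps(\bv_h),\mA\brho_h-\beps(\bw_h))_{L^2(\Omega_0\cup\Omega_1)}$ is bounded by $C\ehnorm{(\btau_h,\bv_h)}\ehnorm{(\brho_h,\bw_h)}$.

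\textbf{Coercivity.} Setting $\brho_h=\btau_h$ and $\bw_h=\bv_h$ in $a_{\mJ_h}$, the interface squares $\|\jumpn{\btau_h}\|_{L^2(\Gamma)}^2$, $h^{-1}\|\jump{\bv_h}\|_{L^2(\Gamma)}^2$, $h\|\jump{\nabla_\Gamma\bv_h}\|_{L^2(\Gamma)}^2$ and the ghost-penalty seminorms $\shsnorm{\btau_h}^2$, $\shunorm{\bv_h}^2$ appear verbatim on the right-hand side, hence they are trivially controlled by $a_{\mJ_h}(\btau_h,\bv_h;\btau_h,\bv_h)$. The remaining ingredient of $\ehnorm{(\btau_h,\bv_h)}^2$, namely the bulk part $\|\btau_h\|_{H(\div;\Omega_0\cup\Omega_1)}^2+\|\bv_h\|_{H^1(\Omega_0\cup\Omega_1)}^2$, is exactly what \eqref{eq_amJhlower} provides; recall that \eqref{eq_amJhlower} was obtained by combining $\mc{J}(\btau_h,\bv_h;\bm{0},\bm{0},\bm{0})\le C\,a_{\mJ_h}(\btau_h,\bv_h;\btau_h,\bv_h)$ (which uses \eqref{eq_Bstauh}--\eqref{eq_Buvh}) with the norm equivalence of Lemma \ref{le_enormequvi}. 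Summing these contributions gives \eqref{eq_amJhcoer}.

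Since both bounds reduce to straightforward applications of Cauchy--Schwarz, \eqref{eq_Abound}, and the already-proven lower estimate \eqref{eq_amJhlower}, I do not anticipate any genuinely hard step. The two places requiring a bit of attention are (i) checking that the constants in the bound of the constitutive residual remain independent of the Lam\'e parameter $\lambda$, which follows because \eqref{eq_Abound} is itself uniform in $\lambda$; and (ii) making sure that the finite element restriction $(\btau_h,\bv_h)\in\bSi_h^m\times\bV_h^m$ is used only where the coercivity argument requires it, namely when invoking \eqref{eq_amJhlower}, whereas the boundedness is stated for the larger spaces $\bSi_h\times\bV_h$ and uses no finite-dimensionality.
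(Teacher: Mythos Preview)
Your proposal is correct and follows precisely the approach the paper itself indicates: as stated just before the lemma, ``The boundedness and the coercivity of the bilinear form $a_{\mJ_h}(\cdot;\cdot)$ directly follows from the Cauchy-Schwarz inequality, the estimate \eqref{eq_Abound}, and the estimate \eqref{eq_amJhlower}.'' Your term-by-term breakdown and the two closing remarks on $\lambda$-independence and the role of the finite element restriction are accurate elaborations of this one-line justification.
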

Bringing the exact solution $(\bsigma, \bu)$ into the discrete
variational problem \eqref{eq_amJh} yields that
\begin{displaymath}
  \begin{aligned}
    a_{\mJ_h}(\bsigma_h, \bu_h; \btau_h, \bv_h) = l_{\mJ_h}(\btau_h,
    \bv_h) = a_{\mJ_h}(\bsigma, \bu; \btau_h, \bv_h) - \Shm(\bsigma,
    \btau_h) - \Ghm(\bu, \bv_h),
  \end{aligned}
\end{displaymath}
which leads to a weakened Galerkin orthogonality property of
$a_{\mJ_h}(\cdot; \cdot)$.
\begin{lemma}
  Let $(\bsigma, \bu) \in \bSi^s \times \bV^{s+1}$ be the exact
  solution to \eqref{eq_problem}, and let $(\bsigma_h, \bu_h) \in
  \bSi_h^m \times \bV_h^m$ be the numerical solution to
  \eqref{eq_amJh}, there holds
  \begin{equation}
    a_{\mJ_h}(\bsigma - \bsigma_h, \bu - \bu_h; \btau_h, \bv_h) =
    \Shm(\bsigma, \btau_h) + \Ghm(\bu, \bv_h).
    \label{eq_amJhGalerkinorth}
  \end{equation}
  \label{le_amJhGalerkinorth}
\end{lemma}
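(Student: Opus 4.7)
The plan is to substitute the exact solution $(\bsigma,\bu)$ as the first argument of $a_{\mJ_h}(\cdot;\cdot)$, use the strong form \eqref{eq_problem} of the PDE and the interface jump conditions to collapse most of the bilinear-form terms to the corresponding linear functionals, and then compare with the discrete equation \eqref{eq_amJh}. Before doing so, I would briefly check that $a_{\mJ_h}(\bsigma,\bu;\btau_h,\bv_h)$ is meaningful under the standing assumption $s\geq 1$: by the embedding used in \eqref{eq_Hm12sbound} we have $\jumpn{\bsigma}\in \bL^2(\Gamma)$, and by \eqref{eq_H1uboundpoly} the jump $\jump{\bu}$ together with its (piecewise) tangential gradient $\jump{\nabla_{\Gamma}\bu}$ lies in $\bL^2(\Gamma)$; the ghost-penalty products $\Shm(\bsigma,\btau_h)$ and $\Ghm(\bu,\bv_h)$ are well defined because the extended components $\pi_i\bsigma\in\bH^s(\div;\Omega)$ and $\pi_i\bu\in\bH^{s+1}(\Omega)$ given in \eqref{eq_soldecomp} provide admissible arguments for $s_{h,i}^{m}(\cdot,\cdot)$.

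Next I would insert the four equations of \eqref{eq_problem} in turn. The constitutive relation $\mA\bsigma-\beps(\bu)=\bm{0}$ annihilates the first block of $a_{\mJ_h}(\bsigma,\bu;\btau_h,\bv_h)$. The equilibrium equation $\nabla\cdot\bsigma=-\bm{f}$ converts $(\nabla\cdot\bsigma,\nabla\cdot\btau_h)_{L^2(\Omega_0\cup\Omega_1)}$ into $-(\bm{f},\nabla\cdot\btau_h)_{L^2(\Omega_0\cup\Omega_1)}$. The jump condition $\jumpn{\bsigma}=\ba$ replaces $(\jumpn{\bsigma},\jumpn{\btau_h})_{L^2(\Gamma)}$ by $(\ba,\jumpn{\btau_h})_{L^2(\Gamma)}$, while $\jump{\bu}=\bb$, together with the induced tangential identity $\jump{\nabla_{\Gamma}\bu}=\nabla_{\Gamma}\bb$, substitutes $\bb$ and $\nabla_{\Gamma}\bb$ inside the two interface products involving $\bv_h$. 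Collecting, the only surviving pieces of $a_{\mJ_h}(\bsigma,\bu;\cdot,\cdot)$ not consumed by the PDE substitution are the ghost-penalty terms, so
\begin{displaymath}
  a_{\mJ_h}(\bsigma,\bu;\btau_h,\bv_h) = l_{\mJ_h}(\btau_h,\bv_h) + \Shm(\bsigma,\btau_h) + \Ghm(\bu,\bv_h).
\end{displaymath}

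Finally I would subtract \eqref{eq_amJh} from this identity and use bilinearity of $a_{\mJ_h}$ in its first argument to arrive at \eqref{eq_amJhGalerkinorth}. I do not anticipate any real obstacle: the statement is essentially a bookkeeping identity that records the consistency error of the scheme, which lives entirely in the stabilization. The only mild subtlety worth flagging explicitly is that, when $\Gamma$ is polygonal, the identity $\jump{\nabla_{\Gamma}\bu}=\nabla_{\Gamma}\bb$ must be interpreted sidewise on each face $\Gamma_j$, consistently with the piecewise convention adopted around \eqref{eq_H1uboundpoly}; this is the only place where corners of $\Gamma$ require a moment's attention.
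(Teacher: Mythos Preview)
Your proposal is correct and follows exactly the paper's approach: the paper's justification, given in the paragraph immediately preceding the lemma, is simply to plug the exact solution into $a_{\mJ_h}$, observe that the PDE and jump conditions yield $a_{\mJ_h}(\bsigma,\bu;\btau_h,\bv_h)=l_{\mJ_h}(\btau_h,\bv_h)+\Shm(\bsigma,\btau_h)+\Ghm(\bu,\bv_h)$, and subtract \eqref{eq_amJh}. Your write-up is in fact more careful than the paper's, since you explicitly check well-definedness of the interface and ghost-penalty terms and flag the piecewise reading of $\nabla_{\Gamma}$ in the polygonal case.
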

To complete the error estimation, we present an approximation estimate
under the norm $\ehnorm{\cdot}$. 
\begin{lemma}
  Let $(\bsigma, \bu) \in \bSi^s \times \bV^{s+1}$ be the exact
  solution to the problem \eqref{eq_problem}, there exists
  $(\bsigma_I, \bu_I) \in \bSi_h^m \times \bV_h^m$ such that
  \begin{equation}
    \ehnorm{ ( \bsigma - \bsigma_I, \bu - \bu_I)} \leq Ch^{t - 1/2} (
    \| \bsigma \|_{H^s(\div; \Omega_0 \cup \Omega_1)} + \| \bu
    \|_{H^{s+1}(\Omega_0 \cup \Omega_1)}), \quad t = \min(m, s).
    \label{eq_apperror1}
  \end{equation}
  Moreover, if $\bsigma \in \mb{H}^{s+1}(\Omega_0 \cup \Omega_1)$,
  there holds 
  \begin{equation}
    \ehnorm{ ( \bsigma - \bsigma_h, \bu - \bu_I)} \leq Ch^{t} ( \|
    \bsigma \|_{H^{s+1}(\Omega_0 \cup \Omega_1)} + \| \bm{u}
    \|_{H^{s+1}(\Omega_0 \cup \Omega_1)}), \quad t = \min(m, s).
    \label{eq_apperror2}
  \end{equation}
  \label{le_apperror}
\end{lemma}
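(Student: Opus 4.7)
The plan is to build the interpolants $(\bsigma_I, \bu_I)$ componentwise by interpolating the stable extensions $\wt{\bsigma}_i \in \bH^s(\div; \Omega)$ and $\wt{\bu}_i \in \bH^{s+1}(\Omega)$ from \eqref{eq_soldecomp} on the overlapping meshes $\MThi$, and then to estimate each term of $\ehnorm{\cdot}$ separately. Let $\Pi_{h,i}^{\bsigma}: \bH^s(\div; \Ohi) \to \bSi_{h,i}^m$ be the canonical $\BDM_m$ interpolant and $\Pi_{h,i}^{\bu}: \bH^{s+1}(\Ohi) \to \bV_{h,i}^m$ a Scott--Zhang or Cl\'ement interpolant (with $\Pi_{h,1}^{\bu}$ preserving the homogeneous Dirichlet trace on $\partial\Omega$). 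I then set
\[ \bsigma_I := (\Pi_{h,0}^{\bsigma} \wt{\bsigma}_0)\chi_0 + (\Pi_{h,1}^{\bsigma} \wt{\bsigma}_1)\chi_1, \qquad \bu_I := (\Pi_{h,0}^{\bu} \wt{\bu}_0)\chi_0 + (\Pi_{h,1}^{\bu} \wt{\bu}_1)\chi_1, \]
so that $\pi_i(\bsigma - \bsigma_I) = \wt{\bsigma}_i - \Pi_{h,i}^{\bsigma}\wt{\bsigma}_i$ on $\Ohi$ (and analogously for $\bu$). The bulk contributions $\| \bsigma - \bsigma_I \|_{H(\div; \Omega_0 \cup \Omega_1)}$ and $\| \bu - \bu_I \|_{H^1(\Omega_0 \cup \Omega_1)}$ follow at once from classical interpolation estimates on $\MThi$ together with the continuity of the extensions, yielding the rate $h^t$ with $t = \min(m, s)$.

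The crux lies in the interface trace terms. On each cut element $K \in \MThG$, applying the $H^1$-trace estimate \eqref{eq_H1trace} (admissible since $s \geq 1$ in this subsection, so $\wt{\bsigma}_i - \Pi_{h,i}^{\bsigma}\wt{\bsigma}_i \in \mb{H}^1(K)$) gives
\[ \| \jumpn{\bsigma - \bsigma_I} \|_{L^2(\Gamma_K)}^2 \leq C \sum_{i = 0, 1} \bigl( h_K \| \wt{\bsigma}_i - \Pi_{h,i}^{\bsigma}\wt{\bsigma}_i \|_{H^1(K)}^2 + h_K^{-1} \| \wt{\bsigma}_i - \Pi_{h,i}^{\bsigma}\wt{\bsigma}_i \|_{L^2(K)}^2 \bigr). \]
Inserting the standard bounds $\| \wt{\bsigma}_i - \Pi_{h,i}^{\bsigma}\wt{\bsigma}_i \|_{H^{1-k}(K)} \leq C h^{s - 1 + k} \| \wt{\bsigma}_i \|_{H^s(\BDK{K})}$ for $k \in \{0, 1\}$ produces a joint contribution of order $h^{2s - 1}$ per cut element; summation over $K \in \MThG$ yields the rate $h^{s - 1/2}$ for $\|\jumpn{\bsigma - \bsigma_I}\|_{L^2(\Gamma)}$, which matches the $h^{t - 1/2}$ in \eqref{eq_apperror1}. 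Under the stronger hypothesis $\bsigma \in \mb{H}^{s+1}(\Omega_0 \cup \Omega_1)$ used in \eqref{eq_apperror2}, the interpolation bounds are one order higher in $h$ and the same argument yields $h^{s + 1/2}$, which is already dominated by $h^t$. The displacement trace terms $h^{-1/2} \| \jump{\bu - \bu_I} \|_{L^2(\Gamma)}$ and $h^{1/2} \| \jump{\nabla_\Gamma(\bu - \bu_I)} \|_{L^2(\Gamma)}$ are handled identically by invoking \eqref{eq_H1trace} on $\wt{\bu}_i - \Pi_{h,i}^{\bu}\wt{\bu}_i$ and on its gradient respectively; the prefactors $h^{\pm 1/2}$ exactly absorb the half-power loss of the trace inequality and yield the target rate $h^s$.

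For the ghost-penalty seminorms, the upper bound in property \textbf{P1} \eqref{eq_shiL2extension} combined with the $L^2$ interpolation estimates produces $\shsnorm{\bsigma - \bsigma_I} \leq C h^{\min(s, m+1)} \| \bsigma \|_{H^s(\div; \Omega_0 \cup \Omega_1)}$ and $\shunorm{\bu - \bu_I} \leq C h^{\min(s+1, m+1)} \| \bu \|_{H^{s+1}(\Omega_0 \cup \Omega_1)}$, both of which are comfortably majorated by $h^t$. The main obstacle I foresee is the unavoidable $h^{1/2}$ loss in the normal-trace estimate for $\bsigma$: because \eqref{eq_H1trace} balances one power of $h$ against $h^{-1}$, pure $H^s$-regularity of the stress can deliver at best the rate $h^{s - 1/2}$, which is precisely what pins the exponent in \eqref{eq_apperror1}. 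The second estimate \eqref{eq_apperror2} then hinges on absorbing this half-power loss through the extra $H^{s+1}$-smoothness of $\bsigma$, and it remains to verify that every other contribution to $\ehnorm{\cdot}$ is already of order at least $h^t$ under that stronger hypothesis, which the bookkeeping above confirms.
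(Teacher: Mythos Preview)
Your proposal is correct and follows essentially the same approach as the paper: interpolate the extensions $\wt{\bsigma}_i,\wt{\bu}_i$ on the overlapping meshes $\MThi$, bound the bulk $H(\div)\times H^1$ errors by standard approximation, handle all three interface terms via the elementwise $H^1$ trace estimate \eqref{eq_H1trace}, and control the ghost-penalty seminorms by the $L^2$ extension property \textbf{P1}. The only cosmetic slip is that in several places you write the resulting rate as $h^{s-1/2}$ or $h^s$ where it should be $h^{t-1/2}$ or $h^t$ with $t=\min(m,s)$, since the polynomial degree $m$ caps the approximation order; the paper keeps this $t$ explicit throughout.
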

\begin{proof}
  For $i = 0, 1$, we let $\bsigma_{I, i} \in \bSi_{h, i}^m$ be the
  interpolant of $\pi_i \bsigma$ into the space $\bSi_{h, i}^m$, and
  let $\bu_{I, i} \in \bV_{h, i}^m$ be the interpolant of $\pi_i \bu$
  into the space $\bV_{h, i}^m$. From the approximation properties of
  finite element spaces \cite{Demkowicz2005H1, Scott1990finite}, we
  have that
  \begin{equation}
    \begin{aligned}
      \| \pi_i \bsigma - \bsigma_{I, i} \|_{H(\div; \Ohi)} & \leq Ch^t
      \| \pi_i \bsigma \|_{H(\div; \Ohi)} \leq Ch^t \| \bsigma
      \|_{H(\div; \Omega_0 \cup \Omega_1)}, \\
      \| \pi_i \bu - \bu_{I, i} \|_{H^1(\Ohi)} & \leq C h^t \| \pi_i
      \bu \|_{H^{s+1}(\Ohi)} \leq C h^t \| \bu \|_{H^{s+1}(\Omega_0
      \cup \Omega_1)}.
    \end{aligned}
    \label{eq_spaceapp}
  \end{equation}
  Let $\bsigma_I := \bsigma_{I, 0} \cdot \chi_0 + \bsigma_{I, 1}
  \cdot \chi_1$ and $\bu_I := \bu_{I, 0} \cdot \chi_0 + \bu_{I, 1}
  \cdot \chi_1$. The errors $\| \bsigma - \bsigma_I \|_{H(\div;
  \Omega_0 \cup \Omega_1)}$ and $\| \bu - \bu_I \|_{H^1(\Omega_0 \cup
  \Omega_1)}$ can be bounded directly by \eqref{eq_spaceapp}. 
  The trace term  $\| \jumpn{\bsigma - \bsigma_I} \|_{L^2(\Gamma)}$ is
  split into two parts by the triangle inequality that
  \begin{displaymath}
    \| \jumpn{\bsigma - \bsigma_I} \|_{L^2(\Gamma)} \leq \| \pi_0
    \bsigma - \bsigma_{I, 0} \|_{L^2(\Gamma)} +  \| \pi_1
    \bsigma - \bsigma_{I, 1} \|_{L^2(\Gamma)}.
  \end{displaymath}
  Since $s \geq 1$, we are allowed to apply the $H^1$ trace estimate
  \eqref{eq_H1trace} and the approximation property
  \eqref{eq_spaceapp} to find that
  \begin{align}
    \| &\pi_i \bsigma - \bsigma_{I, i} \|_{L^2(\Gamma)}^2 = \sum_{K \in
    \MThG} \| \pi_i \bsigma - \bsigma_{I, i} \|_{L^2(\Gamma_K)}^2
    \label{eq_Bshbound} \\
    \leq C
    & \sum_{K \in \MThG} ( h^{-1} \| \pi_i \bsigma - \bsigma_{I, i}
    \|_{L^2(K)}^2 + h \| \pi_i \bsigma - \bsigma_{I, i} \|_{H^1(K)}^2)
    \leq Ch^{2t - 1} \| \bsigma \|_{H^s(\div; \Omega_0 \cup
    \Omega_1)}^2, \quad i = 0, 1. \nonumber
  \end{align}
  Similarly, we apply the trace estimate \eqref{eq_H1trace} to obtain
  that
  \begin{equation}
     h^{-1} \| \jump{\bu - \bu_I} \|_{L^2(\Gamma)}^2 + h \|
     \jump{\nabla_\Gamma (\bu - \bu_I)} \|_{L^2(\Gamma)}^2 \leq C
     h^{2t} \| \bu \|_{H^{s+1}(\Omega_0 \cup \Omega_1)}^2.
    \label{eq_ugradbound}
  \end{equation}
  Finally, we apply the $L^2$ extension property
  \eqref{eq_btL2bound} - \eqref{eq_buL2bound} to deduce that
  \begin{align*}
    \shsnorm{\bsigma - \bsigma_I} &\leq C( \| \pi_0 \bsigma -
    \bsigma_{I, 0} \|_{L^2(\Oho)} +  \| \pi_1 \bsigma -
    \bsigma_{I, 1} \|_{L^2(\Ohl)} ) \leq
    Ch^t \| \bsigma \|_{H^s(\Omega_0 \cup \Omega_1)} \\
    \shunorm{\bu - \bu_I} & \leq C ( \| \pi_0 \bu - \bu_{I, 0}
    \|_{L^2(\Oho)} + \| \pi_1 \bu - \bu_{I, 1} \|_{L^2(\Ohl)}) \leq C
    h^{t+1} \| \bu \|_{H^{s+1}(\Omega_0 \cup \Omega_1)}.
  \end{align*}
  Combining all above estimates yields the desired estimate
  \eqref{eq_apperror1}.
  In addition, if $\bsigma$ has a higher regularity, i.e. $\bsigma \in
  \mb{H}^{s+1}(\Omega_0 \cup \Omega_1)$, the upper bound in
  \eqref{eq_Bshbound} can be replaced by $Ch^{2t} \| \bsigma
  \|_{H^{s+1}(\Omega_0 \cup \Omega_1)}^2$, which leads to the estimate
  \eqref{eq_apperror2}. This completes the proof.
\end{proof}
The error estimation to the numerical solution of \eqref{eq_mJh}
follows from Lemma \ref{le_amJhbc} - Lemma \ref{le_apperror}.
\begin{theorem}
  Let $(\bsigma, \bu) \in \bSi^s \times
  \bV^{s+1}$ be the exact solution to the problem \eqref{eq_problem},
  and let $(\bsigma_h, \bu_h) \in \bSi_h^m \times \bV_h^m$ be the
  numerical solution to the problem \eqref{eq_mJh}, there holds 
  \begin{equation}
    \ehnorm{(\bsigma - \bsigma_h, \bu - \bu_h)} \leq Ch^{t - 1/2} ( \|
    \bsigma \|_{H(\div;\Omega_0 \cup \Omega_1)} + \| \bu
    \|_{H^{s+1}(\Omega_0 \cup \Omega_1)}), \quad t = \min(m, s).
    \label{eq_error1}
  \end{equation}
  Moreover, if $\bsigma \in
  \mb{H}^{s+1}(\Omega_0 \cup \Omega_1)$, there holds
  \begin{equation}
    \ehnorm{(\bsigma - \bsigma_h, \bu - \bu_h)} \leq Ch^{t} ( \|
    \bsigma \|_{H(\div;\Omega_0 \cup \Omega_1)} + \| \bu
    \|_{H^{s+1}(\Omega_0 \cup \Omega_1)}), \quad t = \min(m, s).
    \label{eq_error2}
  \end{equation}
  \label{th_error}
\end{theorem}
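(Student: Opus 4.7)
The plan is to establish \eqref{eq_error1} and \eqref{eq_error2} by a Céa-type argument built on the three preceding lemmas: the boundedness and coercivity of $a_{\mJ_h}$ in Lemma \ref{le_amJhbc}, the perturbed Galerkin orthogonality in Lemma \ref{le_amJhGalerkinorth}, and the interpolation estimate in Lemma \ref{le_apperror}. Let $(\bsigma_I, \bu_I) \in \bSi_h^m \times \bV_h^m$ denote the interpolant furnished by Lemma \ref{le_apperror}. I would begin with the triangle inequality
\[
\ehnorm{(\bsigma - \bsigma_h, \bu - \bu_h)} \leq \ehnorm{(\bsigma - \bsigma_I, \bu - \bu_I)} + \ehnorm{(\bsigma_I - \bsigma_h, \bu_I - \bu_h)},
\]
so that the first summand is controlled directly by \eqref{eq_apperror1} (or \eqref{eq_apperror2} in the higher-regularity case), and only the discrete difference remains to be treated.

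For the discrete difference, I would apply the coercivity \eqref{eq_amJhcoer} to obtain
\[
\ehnorm{(\bsigma_I - \bsigma_h, \bu_I - \bu_h)}^2 \leq C\, a_{\mJ_h}(\bsigma_I - \bsigma_h, \bu_I - \bu_h;\, \bsigma_I - \bsigma_h, \bu_I - \bu_h),
\]
then split the first argument via $\bsigma_I - \bsigma_h = (\bsigma_I - \bsigma) + (\bsigma - \bsigma_h)$ and analogously for the displacement. The contribution involving $(\bsigma_I - \bsigma, \bu_I - \bu)$ is bounded by the continuity \eqref{eq_amJhbound}, yielding a factor of $\ehnorm{(\bsigma - \bsigma_I, \bu - \bu_I)}$, while the contribution involving $(\bsigma - \bsigma_h, \bu - \bu_h)$ is replaced through the perturbed Galerkin orthogonality \eqref{eq_amJhGalerkinorth} by the ghost-penalty residual $\Shm(\bsigma, \bsigma_I - \bsigma_h) + \Ghm(\bu, \bu_I - \bu_h)$. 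Applying Cauchy-Schwarz to the residual and absorbing one copy of $\ehnorm{(\bsigma_I - \bsigma_h, \bu_I - \bu_h)}$ from the right-hand side into the left, this reduces the task to
\[
\ehnorm{(\bsigma_I - \bsigma_h, \bu_I - \bu_h)} \leq C \bigl( \ehnorm{(\bsigma - \bsigma_I, \bu - \bu_I)} + \shsnorm{\bsigma} + \shunorm{\bu} \bigr).
\]

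The main obstacle is controlling the consistency terms $\shsnorm{\bsigma}$ and $\shunorm{\bu}$, since $\bsigma$ and $\bu$ are only piecewise regular across $\Gamma$. I would exploit the global Sobolev extensions $\wt{\bsigma}_i \in \bH^s(\div;\Omega)$ and $\wt{\bu}_i \in \bH^{s+1}(\Omega)$ introduced in \eqref{eq_soldecomp}: since $s_{h,i}^m$ only sees data of $\pi_i \bsigma = \wt{\bsigma}_i$ and $\pi_i \bu = \wt{\bu}_i$ (which are defined on all of $\Omega$), the componentwise weak consistency \textbf{P2} of \eqref{eq_weakconsist} gives $\shsnorm{\bsigma} + \shunorm{\bu} \leq Ch^t(\|\bsigma\|_{H^s(\Omega_0 \cup \Omega_1)} + \|\bu\|_{H^{s+1}(\Omega_0 \cup \Omega_1)})$. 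Because $h^t$ is of higher order than the rate $h^{t-1/2}$ coming from \eqref{eq_apperror1}, the ghost-penalty residual is absorbed into the interpolation contribution, and assembling the two pieces via the triangle inequality produces \eqref{eq_error1}. The sharper bound \eqref{eq_error2} follows by the same argument after substituting \eqref{eq_apperror2} wherever \eqref{eq_apperror1} was used, since \eqref{eq_apperror2} itself gives the rate $h^t$ which dominates the ghost-penalty consistency contribution.
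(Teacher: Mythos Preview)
Your proposal is correct and follows essentially the same C\'ea-type argument as the paper's proof: coercivity on the discrete difference, the perturbed Galerkin orthogonality from Lemma~\ref{le_amJhGalerkinorth}, continuity from Lemma~\ref{le_amJhbc}, and then the approximation bounds of Lemma~\ref{le_apperror} together with the weak consistency \textbf{P2} for the ghost-penalty residuals. The only cosmetic difference is that the paper keeps $(\btau_h,\bv_h)$ arbitrary until the very end and bounds the consistency terms by $h^s$ rather than $h^t$, but both routes lead to the same conclusion.
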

\begin{proof}
  The proof is standard.  For any $(\btau_h, \bv_h) \in \bSi_h^m
  \times \bV_h^m$, we apply \eqref{eq_amJhbound} -
  \eqref{eq_amJhGalerkinorth} to find that
  \begin{align*}
    &\ehnorm{(\bsigma_h - \btau_h, \bu_h - \bv_h)}^2  \leq C
    a_h(\bsigma_h - \btau_h, \bu_h - \bv_h; \bsigma_h - \btau_h, \bu_h
    - \bv_h) \\
    & = C ( a_h(\bsigma - \btau_h, \bu - \bv_h; \bsigma_h - \btau_h, \bu_h
    - \bv_h) - s_h(\bsigma, \bsigma_h - \btau_h) - s_h(\bu, \bu_h -
    \bw_h)) \\
    & \leq C ( \ehnorm{(\bsigma - \btau_h, \bu - \bv_h)} +
    \shsnorm{\bsigma} + \shunorm{\bu})  \ehnorm{(\bsigma_u - \btau_h,
    \bu_h - \bv_h)} \\ 
    & \leq C ( \ehnorm{(\bsigma - \btau_h, \bu - \bv_h)} + h^s \|
    \bsigma \|_{H^s(\Omega_0 \cup \Omega_1)} + h^{s+1} \| \bu
    \|_{H^{s+1}(\Omega_0 \cup \Omega_1)} )  \ehnorm{(\bsigma_u -
    \btau_h, \bu_h - \bv_h)}. 
  \end{align*}
  Applying the triangle inequality, we find that
  \begin{displaymath}
    \ehnorm{(\bsigma - \bsigma_h, \bu - \bu_h)} \leq C
    \ehnorm{(\bsigma - \btau_h, \bu - \bv_h)} + C h^s(\| \bsigma
    \|_{H^s(\div; \Omega_0 \cup \Omega_1)} + \| \bu
    \|_{H^{s+1}(\Omega_0 \cup \Omega_1)}).
  \end{displaymath}
  Since $(\btau_h, \bv_h)$ is arbitrary, the error estimate under the
  norm $\ehnorm{\cdot}$ follows from the approximation properties in
  Lemma \ref{le_apperror}. This completes the proof.
\end{proof}
We estimate the condition number for the
linear system \eqref{eq_amJh}, which is especially desired in the
unfitted method. 
\begin{theorem}
  There exists a constant $C$ such that 
  \begin{equation}
    \kappa(A_{\mJ_h}) \leq C h^{-2},
    \label{eq_amJhcont}
  \end{equation}
  where $A_{\mJ_h}$ is the linear system with respect to
  $a_{\mJ_h}(\cdot; \cdot)$.
  \label{th_amJhcont}
\end{theorem}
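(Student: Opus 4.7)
The plan is the standard two-sided Rayleigh quotient argument: write any discrete pair $(\btau_h,\bv_h)\in\bSi_h^m\times\bV_h^m$ through its coefficient vector $\vec{\bm c}$ on the extended meshes $\MThi$, and bound $a_{\mJ_h}(\btau_h,\bv_h;\btau_h,\bv_h)$ from below and above by constants times $h^d\|\vec{\bm c}\|_{\ell^2}^2$ and $h^{d-2}\|\vec{\bm c}\|_{\ell^2}^2$ respectively; the ratio then yields \eqref{eq_amJhcont}. The first step I would carry out is to record the coefficient/norm correspondence. Since the nodal basis on a quasi-uniform simplicial mesh is locally scaled, one has the standard equivalence
\begin{equation*}
  C^{-1} h^d \|\vec{\bm c}\|_{\ell^2}^2 \leq \|\pi_0\btau_h\|_{L^2(\Oho)}^2 + \|\pi_1\btau_h\|_{L^2(\Ohl)}^2 + \|\pi_0\bv_h\|_{L^2(\Oho)}^2 + \|\pi_1\bv_h\|_{L^2(\Ohl)}^2 \leq C h^d \|\vec{\bm c}\|_{\ell^2}^2,
\end{equation*}
where the coefficients live on the degrees of freedom of $\bSi_{h,i}^m$ and $\bV_{h,i}^m$.

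For the lower bound of $\lambda_{\min}$, I would start from the coercivity estimate \eqref{eq_amJhcoer}, which gives $a_{\mJ_h}(\btau_h,\bv_h;\btau_h,\bv_h)\geq C(\|\btau_h\|_{L^2(\Omega)}^2+\|\bv_h\|_{L^2(\Omega)}^2+\shsnorm{\btau_h}^2+\shunorm{\bv_h}^2)$. The ghost penalty terms are precisely what is needed: by the lower bound in property \textbf{P1} (applied componentwise to $\Shm$ and $\Ghm$), the right hand side dominates $\|\pi_0\btau_h\|_{L^2(\Oho)}^2+\|\pi_1\btau_h\|_{L^2(\Ohl)}^2+\|\pi_0\bv_h\|_{L^2(\Oho)}^2+\|\pi_1\bv_h\|_{L^2(\Ohl)}^2$, which in turn dominates $Ch^d\|\vec{\bm c}\|_{\ell^2}^2$ by the equivalence above. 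Hence $\lambda_{\min}(A_{\mJ_h})\geq C h^d$, and this bound is robust with respect to the interface cut thanks to the ghost penalty.

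For the upper bound of $\lambda_{\max}$, I would estimate each summand of $a_{\mJ_h}(\btau_h,\bv_h;\btau_h,\bv_h)$ by inverse estimates and push everything back to $L^2(\Oho)\cup L^2(\Ohl)$, from which the $\ell^2$-norm of $\vec{\bm c}$ re-enters through the same equivalence. Concretely, $\|\mA\btau_h-\beps(\bv_h)\|_{L^2(\Omega_0\cup\Omega_1)}^2+\|\nabla\cdot\btau_h\|_{L^2(\Omega_0\cup\Omega_1)}^2$ is bounded by $Ch^{-2}$ times $L^2$ norms via the inverse inequality and \eqref{eq_Abound}. The interface $L^2$ trace terms $\|\jumpn{\btau_h}\|_{L^2(\Gamma)}^2$, $h^{-1}\|\jump{\bv_h}\|_{L^2(\Gamma)}^2$ and $h\|\jump{\nabla_\Gamma\bv_h}\|_{L^2(\Gamma)}^2$ are controlled by combining the $H^1$ trace estimate \eqref{eq_H1trace} with the standard inverse inequality on each cut element, producing at worst an $h^{-2}$ factor in front of $L^2$ norms on $\Oho\cup\Ohl$. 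Finally, the ghost penalty terms $\shsnorm{\btau_h}^2+\shunorm{\bv_h}^2$ are bounded by $L^2$ norms on $\Oho\cup\Ohl$ by the upper bound in \textbf{P1}, with no $h^{-2}$ blow-up. Combining these and translating via the coefficient equivalence gives $\lambda_{\max}(A_{\mJ_h})\leq C h^{d-2}$.

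The main obstacle is the interface term $h\|\jump{\nabla_\Gamma\bv_h}\|_{L^2(\Gamma)}^2$, because a naive use of \eqref{eq_H1trace} on $\nabla\bv_h$ seems to cost $h_K^{-1}\|\nabla\bv_h\|_{L^2(K)}^2+h_K\|\nabla\bv_h\|_{H^1(K)}^2$; but after multiplying by $h$ and using the standard inverse inequality on $\bv_h$ twice, both contributions yield $h^{-2}\|\bv_h\|_{L^2(K)}^2$, which is the scaling we want. Putting the two bounds together gives $\kappa(A_{\mJ_h})=\lambda_{\max}/\lambda_{\min}\leq C h^{-2}$, independently of how $\Gamma$ cuts $\MTh$.
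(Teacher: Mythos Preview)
Your argument is correct and follows essentially the same route as the paper. The paper phrases the key step as a two-sided norm equivalence between $\ehnorm{(\btau_h,\bv_h)}^2$ and $\sum_{i=0,1}(\|\pi_i\btau_h\|_{L^2(\Ohi)}^2+\|\pi_i\bv_h\|_{L^2(\Ohi)}^2)$, citing \cite{Ern2006evaluation,Massing2019stabilized} for the passage to the condition number; you instead write out the Rayleigh-quotient reduction explicitly and estimate $a_{\mJ_h}$ term by term, but the ingredients (coercivity \eqref{eq_amJhcoer}, property \textbf{P1} via \eqref{eq_btL2bound}--\eqref{eq_buL2bound}, the trace inequality \eqref{eq_H1trace}, and inverse estimates) and their roles are identical.
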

\begin{proof}
  Since the bilinear form $a_{\mJ_h}(\cdot; \cdot)$ is bounded and
  coercive with respect to the norm $\ehnorm{\cdot}$.  From
  \cite[Section 3.2]{Ern2006evaluation}, the main step is to show the
  relationship between the energy norm $\ehnorm{\cdot}$ and the $L^2$
  norm.  Note that the finite element spaces $\bSi_h^m$ and $\bV_h^m$
  are the combinations of $\bSi_{h, 0}^m$ and $\bSi_{h,1}^m$ and of
  $\bV_{h, 0}^m$ and $\bV_{h, 1}^m$, respectively, and the spaces
  $\bSi_{h,i}^m$ and $\bV_{h,i}^m$ are defined on the domain $\Ohi$.
  Hence, our goal is to show that 
  \begin{equation}
    \begin{aligned}
      \sum_{i = 0, 1} ( \| \pi_i \btau_h &\|_{L^2(\Ohi)}^2 + \| \pi_i
      \bv_h \|_{L^2(\Ohi)}^2)  \leq C \ehnorm{(\btau_h, \bv_h)}^2 \\
      & \leq C h^{-2}    \sum_{i = 0, 1} ( \| \pi_i \btau_h
      \|_{L^2(\Ohi)}^2 + \| \pi_i \bv_h \|_{L^2(\Ohi)}^2), \quad
      \forall (\btau_h, \bv_h) \in \bSi_h^m \times \bV_h^m.
    \end{aligned}
    \label{eq_ehnormL2bound}
  \end{equation}
  Since $\ehnorm{\cdot}$ is stronger than $\enorm{\cdot}$, we have
  that
  \begin{displaymath}
    \|\btau_h \|_{L^2(\Omega_0 \cup \Omega_1)} + \| \bv_h
    \|_{L^2(\Omega_0 \cup \Omega_1)} \leq C \enorm{(\btau_h, \bv_h)}
    \leq C \ehnorm{(\btau_h, \bv_h)}.
  \end{displaymath}
  The lower bound in \eqref{eq_ehnormL2bound} then follows from
  the estimates \eqref{eq_btL2bound} and \eqref{eq_buL2bound}.
  For the upper bound, we apply the triangle inequality and the
  inverse estimate to find that
  \begin{displaymath}
    \begin{aligned}
      \ehnorm{(\btau_h, \bv_h)}^2 \leq C \sum_{i = 0, 1} ( \|\pi_i
      \btau_h \|_{H^1(\Ohi)}^2 + \| \pi_i \bv_h \|_{H^1(\Ohi)}^2) \leq
      Ch^{-2}  \sum_{i = 0, 1} ( \| \pi_i \btau_{h} \|_{L^2(\Ohi)}^2 +
      \| \pi_i \bv_{h} \|_{L^2(\Ohi)}^2).
    \end{aligned}
  \end{displaymath}
  From \cite[Corollary 3.4]{Ern2006evaluation} and \cite[Section
  2.6]{Massing2019stabilized}, the estimate
  \eqref{eq_amJhcont} comes from \eqref{eq_ehnormL2bound}, which
  completes the proof.
\end{proof}
We have completed the error analysis for the discrete
variational form \eqref{eq_amJh}. The scheme is robust in the sense
that
the constants appearing in the error bounds \eqref{eq_error1} -  
\eqref{eq_error2} are independent of $\lambda$,  and also are
independent of how the interface $\Gamma$ cuts the mesh.
From Theorem \ref{th_error}, the convergence rate of the numerical
error under the energy norm is half order lower than the optimal rate
without the higher regularity assumption. The major reason is that we
use the stronger $L^2$ norm to replace the minus norm $\| \cdot
\|_{H^{-1/2}(\Gamma)}$ to define the new quadratic functional, and the
errors on the interface are essentially established by the $H^1$ trace
estimate as \eqref{eq_Bshbound} and \eqref{eq_ugradbound}.
In addition, both estimates require the exact solution 
$(\bsigma, \bu)$ has at least $H^1(\div) \times H^2$ regularity. 
Consequently, this method and the analysis cannot be extended to the
case of low regularity that $s < 1$. 

\subsection{The Least Squares Finite Element Method with the discrete
minus norm for $s > 1/2$}
\label{subsec_mnorm}
In this subsection, we give the numerical method that has the optimal
convergence speed for $s > 1/2$.
As stated before, the replacement of the $L^2$ norm cannot work and
the $H^1$ trace estimate is also inappropriate for the case of low
regularity. The natural choice is to involve the $\| \cdot
\|_{H^{-1/2}(\Gamma)}$ in the discrete least squares functional, but
the minus norm is not easy to compute. In this subsection, we follow
the idea in \cite{Bramble1997least, Bramble2001least} to employ a
discrete inner product, which is related to the minus norm $\| \cdot
\|_{H^{-1/2}(\Gamma)}$. We first give an inner product in
$\bH^{-1/2}(\Gamma)$.

For any $\bv \in \bH^{-1/2}(\Gamma)$, consider the elliptic problem 
$\bw - \Delta \bw = \bm{0}$ in $\Omega_0$ with $\partial_{\un} \bw =
\bv$ on $\Gamma = \partial \Omega_0$. The corresponding weak form
reads
\begin{equation}
  (\nabla \bm{w}, \nabla \bm{t})_{L^2(\Omega_0)} +
  (\bm{w}, \bm{t})_{L^2(\Omega_0)} = (\bm{v},  \bm{t})_{L^2(\Gamma)},
  \quad \forall \bm{t} \in \bH^1(\Omega_0),
  \label{eq_H12weak}
\end{equation}
The elliptic regularity theory indicates that the problem
\eqref{eq_H12weak} admits a unique solution $\bw \in \bH^1(\Omega_0)$
with $\| \bw \|_{H^1(\Omega_0)} \leq C \| \bv
\|_{H^{-1/2}(\Gamma)}$. This fact allows us to define an operator
$T:\bH^{-1/2}(\Gamma) \rightarrow \bH^1(\Omega_0)$ such that $T \bv$
is the solution to \eqref{eq_H12weak} for $\forall \bv \in
\bH^{-1/2}(\Gamma)$. From $T$ and \eqref{eq_H12weak}, we define an
inner product in $\bH^{-1/2}(\Gamma)$ as 
\begin{equation}
  (\bv, \bq)_{-1/2, \Gamma} := (\bv, T \bq)_{L^2(\Gamma)}, \quad
  \forall \bv, \bq \in H^{-1/2}(\Gamma),
  \label{eq_H12innerproduct}
\end{equation}
with the induced norm $\| \bv \|_{-1/2, \Gamma}^2 := (\bv, \bv)_{-1/2,
\Gamma}$.
We then show that $\| \cdot \|_{-1/2, \Gamma}$  and 
$\| \cdot \|_{H^{-1/2}(\Gamma)}$ are equivalent. For $\forall \bv \in
\bH^{-1/2}(\Gamma)$, we let $\bt = T \bv$ in \eqref{eq_H12weak}, 
which directly gives that $\|T \bv \|_{H^1(\Omega_0)} = \| \bv
\|_{-1/2, \Gamma}$. From the trace estimate $\| \bw
\|_{H^{1/2}(\Gamma)} \leq C \| \bw \|_{H^1(\Omega_0)}(\forall \bw \in
\bH^1(\Omega_0))$, we derive that
\begin{displaymath}
  \begin{aligned}
    \| \bv \|_{-1/2, \Gamma} = \frac{(\bv, T \bv)_{L^2(\Gamma)}}{\| T
    \bv \|_{H^1(\Omega_0)}} \leq C  \frac{(\bv, T \bv)_{L^2(\Gamma)}}{\| T
    \bv \|_{H^{1/2}(\Gamma)}} \leq C \| \bv \|_{H^{-1/2}(\Gamma)},
    \quad \forall \bv \in \bH^{-1/2}(\Gamma).
  \end{aligned}
\end{displaymath}
For any $\bz \in \bH^{1/2}(\Gamma)$, there exists
$\bvphi_{\bz} \in \bH^1(\Omega_0)$ such that $\bvphi_{\bz}|_{\Gamma} =
\bz$ and $ \| \bvphi_{\bz} \|_{H^1(\Omega_0)} \leq C \| \bz
\|_{H^{1/2}(\Gamma)}$. Given $\bv \in \bH^{-1/2}(\Gamma)$, we find
that 
\begin{displaymath}
  \begin{aligned}
    (\bv, \bz)_{L^2(\Gamma)} = (\bv, &\bvphi_{\bz})_{L^2(\Gamma)}  =
    (\nabla T \bv, \nabla \bvphi_{\bz})_{L^2(\Omega_0)} + (T \bv,
    \bvphi_{\bz})_{L^2(\Omega_0)} \\
    & \leq \| T \bv \|_{H^1(\Omega_0)} \| \bvphi_{\bz}
    \|_{H^1(\Omega_0)} = C \| \bv \|_{-1/2, \Gamma} \| \bz
    \|_{H^{1/2}(\Gamma)}, \quad \forall \bz \in \bH^{1/2}(\Gamma),
  \end{aligned}
\end{displaymath}
which indicates that $  \| \bv \|_{H^{-1/2}(\Gamma)} \leq C \| \bv
\|_{-1/2, \Gamma}$. The equivalence between both norms is reached.

Roughly speaking, we have defined an equivalent norm and an inner
product for the space $\bH^{-1/2}(\Gamma)$, although the operator $T$
and the inner product $(\cdot, \cdot)_{-1/2, \Gamma}$ still cannot be
directly computed. 
In the numerical scheme, we introduce a
discrete operator $T_h$ to replace $T$ to make the method
computationally feasible. 
The discrete operator $T_h$ is established with the space $\bV_{h,
0}^1$, which is the $C^0$ linear finite element space defined
on the partition
$\MTho$.
Given any $\bv \in \bH^{-1/2}(\Gamma)$, we define the following
discrete weak form: seek $\bw_h \in \bV_{h, 0}^1$ such that
\begin{equation}
  (\nabla \bw_h, \nabla \bt_h)_{L^2(\Omega_0)} + (\bw_h,
  \bt_h)_{L^2(\Omega_0)} + s_{h, 0}^1(\bw_h, \bt_h) =  (\bv,
  \bt_h)_{L^2(\Gamma)}, \quad \forall \bt_h \in \bV_{h, 0}^1.
  \label{eq_dH12weak}
\end{equation}
\begin{remark}
  For the discrete system \eqref{eq_dH12weak}, the ghost penalty form
  $s_{h, 0}^1(\cdot, \cdot)$ ensures the uniform upper bound of the
  corresponding matrix. From \cite[Theorem
  2.16]{Massing2019stabilized}, the upper bound of the condition
  number is $O(h^{-2})$ for \eqref{eq_dH12weak}.
  \label{re_Bcond}
\end{remark}
It can be easily seen that the problem \eqref{eq_dH12weak} admits a
unique solution in $\bV_{h, 0}^1$. Similarly, this property allows us
to define the operator $T_h: \bH^{-1/2}(\Gamma) \rightarrow \bV_{h,
0}^1$ that
$T_h \bv$ is the solution to the discrete problem \eqref{eq_dH12weak}
for $\forall \bv \in \bH^{-1/2}(\Gamma)$. 
As \eqref{eq_H12innerproduct}, we
define the discrete inner product $(\cdot, \cdot)_{-1/2, h}$ and the
induced norm $\| \cdot \|_{-1/2, h}$ as 
\begin{equation}
  (\bv, \bq)_{-1/2, h} := (\bv, T_h \bq)_{L^2(\Gamma)}, \quad \| \bv
  \|_{-1/2, h}^2 := (\bv, \bv)_{-1/2, h}, \quad \forall
  \bv, \bq \in \bH^{-1/2}(\Gamma).
  \label{eq_dH12inner}
\end{equation}
Let $\bt_h = \bw_h$ in \eqref{eq_dH12weak}, together with the
property \eqref{eq_shiL2extension}, 
it can be seen that
\begin{equation}
  \| \bv \|_{-1/2, h}^2 = (\bv, T_h \bv)_{-1/2, h} = \| T_h \bv
  \|_{H^1(\Omega_0)}^2 + \shubonorm{T_h \bv}^2 \geq \| T_h \bv
  \|_{H^1(\Omega_0)}^2,
  \label{eq_12hnormbound1}
\end{equation}
and
\begin{equation}
  \| \bv \|_{-1/2, h}^2 =  \| T_h \bv \|_{H^1(\Omega_0)}^2 +
  \shubonorm{T_h \bv}^2  \leq C \| T_h \bv \|_{H^1(\Oho)}^2.
  \label{eq_12hnormbound2}
\end{equation}
Then, we give the relationship between $T$ and $T_h$. 
\begin{lemma}
  There exist constants $C$ such that 
  \begin{equation}
    \| \bv \|_{-1/2, h} \leq C \| \bv \|_{{-1/2}, \Gamma}, \quad
    \forall \bv \in \bH^{-1/2}(\Gamma),
    \label{eq_d12normbound1}
  \end{equation}
  and
  \begin{equation}
    \| \bv \|_{{-1/2}, \Gamma} \leq C ( \| \bv \|_{-1/2, h} + h^{1/2}
    \| \bv \|_{L^2(\Gamma)}), \quad \forall \bv \in \bL^2(\Gamma).
    \label{eq_d12normbound2}
  \end{equation}
  \label{le_d12normbound}
\end{lemma}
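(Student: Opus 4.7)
The plan is to treat the two bounds by exploiting the fact that $T$ and $T_h$ are the Riesz representers of the map $\bv\mapsto(\bv,\cdot)_{L^2(\Gamma)}$ with respect to the inner products $(\nabla\cdot,\nabla\cdot)_{L^2(\Omega_0)}+(\cdot,\cdot)_{L^2(\Omega_0)}$ and its ghost-penalized discrete counterpart $a_h(\cdot,\cdot):=(\nabla\cdot,\nabla\cdot)_{L^2(\Omega_0)}+(\cdot,\cdot)_{L^2(\Omega_0)}+s_{h,0}^1(\cdot,\cdot)$. In particular, both norms admit a dual representation
\begin{displaymath}
\|\bv\|_{-1/2,\Gamma}=\sup_{\bm{0}\neq \bt\in\bH^1(\Omega_0)}\frac{(\bv,\bt)_{L^2(\Gamma)}}{\|\bt\|_{H^1(\Omega_0)}},\qquad \|\bv\|_{-1/2,h}=\sup_{\bm{0}\neq\bt_h\in\bV_{h,0}^1}\frac{(\bv,\bt_h)_{L^2(\Gamma)}}{\sqrt{a_h(\bt_h,\bt_h)}},
\end{displaymath}
with the second identity following from \eqref{eq_dH12weak}, \eqref{eq_dH12inner} and the Cauchy-Schwarz inequality applied in the $a_h$ inner product (with equality attained at $\bt_h=T_h\bv$).

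For \eqref{eq_d12normbound1} the key observation is that any $\bt_h\in\bV_{h,0}^1$ restricts to a function in $\bH^1(\Omega_0)$, and \eqref{eq_shiL2extension} together with the positivity of $s_{h,0}^1(\cdot,\cdot)$ gives $\sqrt{a_h(\bt_h,\bt_h)}\geq\|\bt_h\|_{H^1(\Omega_0)}$. Plugging $\bt_h$ into the dual characterization of $\|\bv\|_{-1/2,\Gamma}$ therefore yields $(\bv,\bt_h)_{L^2(\Gamma)}/\sqrt{a_h(\bt_h,\bt_h)}\leq\|\bv\|_{-1/2,\Gamma}$, and taking the supremum over $\bt_h$ completes the first bound.

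For \eqref{eq_d12normbound2}, set $\bw:=T\bv$ so that $\|\bv\|_{-1/2,\Gamma}^2=\|\bw\|_{H^1(\Omega_0)}^2=(\bv,\bw)_{L^2(\Gamma)}$. Let $\wt{\bw}\in\bH^1(\mathbb{R}^d)$ be a Stein extension with $\|\wt{\bw}\|_{H^1(\mathbb{R}^d)}\leq C\|\bw\|_{H^1(\Omega_0)}$ and let $I_h\wt{\bw}\in\bV_{h,0}^1$ be its Scott-Zhang interpolant on $\MTho$. I would decompose
\begin{displaymath}
\|\bv\|_{-1/2,\Gamma}^2=(\bv,\bw-I_h\wt{\bw})_{L^2(\Gamma)}+(\bv,I_h\wt{\bw})_{L^2(\Gamma)},
\end{displaymath}
and treat the two terms separately. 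The first term is bounded via Cauchy-Schwarz on $\Gamma$ together with the standard cut-element trace approximation $\|\bw-I_h\wt{\bw}\|_{L^2(\Gamma)}\leq Ch^{1/2}\|\wt{\bw}\|_{H^1(\mathbb{R}^d)}\leq Ch^{1/2}\|\bw\|_{H^1(\Omega_0)}$, obtained by combining Lemma~\ref{le_H1trace} on each $K\in\MThG$ with the local interpolation error. For the second term, the defining relation \eqref{eq_dH12weak} gives the Galerkin identity $(\bv,I_h\wt{\bw})_{L^2(\Gamma)}=a_h(T_h\bv,I_h\wt{\bw})$, so Cauchy-Schwarz in $a_h$ yields $(\bv,I_h\wt{\bw})_{L^2(\Gamma)}\leq\|\bv\|_{-1/2,h}\sqrt{a_h(I_h\wt{\bw},I_h\wt{\bw})}$.

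The crux of the argument, and where I expect the main technical work, is the estimate $\sqrt{a_h(I_h\wt{\bw},I_h\wt{\bw})}\leq C\|\bw\|_{H^1(\Omega_0)}$. The $H^1(\Omega_0)$ part of $a_h$ is controlled by stability of the Scott-Zhang interpolation, while the ghost penalty contribution requires care: by \textbf{P1} applied to $I_h\wt{\bw}-\wt{\bw}$ and \textbf{P2} (with $s=0$, $r=1$) applied to $\wt{\bw}\in H^1(\Omega)$, we split $\shubonorm{I_h\wt{\bw}}\leq\shubonorm{I_h\wt{\bw}-\wt{\bw}}+\shubonorm{\wt{\bw}}\leq C\|I_h\wt{\bw}-\wt{\bw}\|_{L^2(\Oho)}+Ch\|\wt{\bw}\|_{H^1(\Omega)}\leq Ch\|\bw\|_{H^1(\Omega_0)}$. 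Combining both bounds, dividing the resulting inequality by $\|\bv\|_{-1/2,\Gamma}=\|\bw\|_{H^1(\Omega_0)}$, and absorbing constants yields \eqref{eq_d12normbound2}. The delicate point is ensuring the weak consistency \textbf{P2} is applicable on the full extended region, which is why the Stein extension to all of $\mathbb{R}^d$ (rather than any interior extension) is essential.
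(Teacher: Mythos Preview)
Your proposal is correct and follows essentially the same route as the paper: both proofs use the dual characterizations of the two norms for \eqref{eq_d12normbound1}, and for \eqref{eq_d12normbound2} both extend $T\bv$ beyond $\Omega_0$, take a Scott--Zhang interpolant into $\bV_{h,0}^1$, split $(\bv,T\bv)_{L^2(\Gamma)}$ into an interpolation-error term (handled by the $H^1$ trace estimate) and a discrete term (handled via \eqref{eq_dH12weak} and Cauchy--Schwarz in $a_h$).

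One simplification worth noting: your treatment of the ghost penalty contribution $\shubonorm{I_h\wt{\bw}}$ via the triangle inequality and \textbf{P2} is more work than necessary. The paper bounds it in a single stroke using only the upper bound in \textbf{P1}, namely $\shubonorm{I_h\wt{\bw}}\leq C\|I_h\wt{\bw}\|_{L^2(\Oho)}$, after which $H^1$-stability of the Scott--Zhang interpolant on $\Oho$ immediately gives $\sqrt{a_h(I_h\wt{\bw},I_h\wt{\bw})}\leq C\|I_h\wt{\bw}\|_{H^1(\Oho)}\leq C\|\wt{\bw}\|_{H^1(\Oho)}\leq C\|\bw\|_{H^1(\Omega_0)}$; in particular an extension to $\Omega$ (rather than all of $\mathbb{R}^d$) suffices, and \textbf{P2} is not needed here.
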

\begin{proof}
  For $\forall \bv \in \bH^{-1/2}(\Gamma)$, from \eqref{eq_H12weak}
  and \eqref{eq_12hnormbound1}, we have that
  \begin{displaymath}
    \begin{aligned}
      \| \bv \|_{-1/2, h} = \frac{(\bv, T_h \bv)_{L^2(\Gamma)}}{ \|
      \bv \|_{-1/2, h}} \leq \frac{(\bv, T_h \bv)_{L^2(\Gamma)}}{ \|
      T_h \bv \|_{H^1(\Omega_0)}} \leq \sup_{\bt \in \bH^1(\Omega_0)}
      \frac{(\bv, \bt)_{L^2(\Gamma)}}{ \| \bt \|_{H^1(\Omega_0)}} \leq
      \| T \bv \|_{H^1(\Omega_0)} \leq  
      \| \bv \|_{-1/2, \Gamma},
    \end{aligned}
  \end{displaymath}
  which brings us the estimate \eqref{eq_d12normbound1}.

  From the Sobolev extension theory \cite{Adams2003sobolev}, there
  exists a linear extension operator $E_0:  \bH^1(\Omega_0)
  \rightarrow \bH^1(\Omega)$ such that $ (E^0 \bw)|_{\Omega_0} = \bw$,
  $\| E^0 \bw \|_{H^1(\Omega)} \leq \| \bw \|_{H^1(\Omega_0)}$ for
  $\forall \bw \in \bH^1(\Omega_0)$. Let $I_{h, 0}^1$ be the
  Scott-Zhang interpolation operator of the space $\bV_{h, 0}^1$,
  which satisfies the approximation estimates
  \begin{displaymath}
    \| \bw - I_{h, 0}^1 \bw \|_{H^q(\Oho)} \leq C h^{1-q} \| \bw
    \|_{H^1(\Oho)}, \quad q = 0, 1, \quad \forall \bw \in
    \bH^1(\Oho).
  \end{displaymath}
  From the trace estimate \eqref{eq_H1trace}, it is quite standard to
  derive that $\| \bw - I_{h, 0}^1 \bw \|_{L^2(\Gamma)} \leq C h^{1/2}
  \| \bw \|_{H^1(\Oho)}$ for $\forall \bw \in H^1(\Oho)$.  Then, we
  deduce that
  \begin{align*}
    \| \bv \|_{-1/2, \Gamma}^2 = (\bv, T \bv)_{L^2(\Gamma)} &= (\bv,
    E^0 (T \bv))_{L^2(\Gamma)} \\
    & = (\bv, I_{h, 0}^1 E^0 (T
    \bv))_{L^2(\Gamma)} + (\bv, E^0(T \bv) - I_{h,
    0}^1 (E^0 (T \bv)))_{L^2(\Gamma)}.
  \end{align*}
  The second term can be bounded by the approximation property, that is
  \begin{align*}
    (\bv, E^0(T \bv) - I_{h,0} (E^0 (T \bv)))_{L^2(\Gamma)} &\leq
    Ch^{1/2} \| \bv \|_{L^2(\Gamma)} \| E^0(T \bv) \|_{H^1(\Oho)}
    \leq Ch^{1/2}\| \bv \|_{L^2(\Gamma)}  \| T \bv
    \|_{H^1(\Omega_0)} \\
    & \leq   Ch^{1/2} \| \bv \|_{L^2(\Gamma)}  \| \bv \|_{-1/2,
    \Gamma}.
  \end{align*}
  We let $\bt_h =  I_{h, 0}^1(E^0 (T \bv))$ in \eqref{eq_dH12weak} and
  apply \eqref{eq_12hnormbound2} to bound the first term,
  \begin{align*}
    (\bv, I_{h, 0}^1(E^0& (T \bv)))_{L^2(\Gamma)}  \leq C( \| T_h \bv
    \|_{H^1(\Omega_0)} + \shubonorm{T_h \bv})(\|  I_{h, 0}^1 (E^0 (T
    \bv)) \|_{H^1(\Omega_0)} + \shubonorm{ I_{h, 0}^1 (E^0 (T \bv))}) \\
    & \leq C \| \bv \|_{-1/2, h} \|I_{h, 0}^1(E^0 (T
    \bv))\|_{H^1(\Oho)} \leq C \| \bv \|_{-1/2, h} \|E^0 (T
    \bv)\|_{H^1(\Oho)} \\
    &\leq C \| \bv \|_{-1/2, h} \|E^0 (T \bv)\|_{H^1(\Omega)}
    \leq C \| \bv \|_{-1/2, h} \|T
    \bv\|_{H^1(\Omega_0)} \leq C \| \bv \|_{-1/2, h} \| \bv
    \|_{-1/2, \Gamma}. 
  \end{align*}
  Collecting all above estimates yields the second estimate
  \eqref{eq_d12normbound2}, which completes the proof.
\end{proof}

Now, let us define the least squares functional $\wt{\mJ}_h(\cdot;
\cdot)$ by
\begin{equation}
  \begin{aligned}
    \wt{\mJ}_h(\btau_h, \bv_h; \bm{f}, & \bm{a}, \bm{b}) := J(\btau_h,
    \bv_h; \bm{f}) \\
    + & \wt{B}_h^{\bsigma}(\btau_h; \bm{a}) + B_h^{\bu}(\bv_h; \bm{b})
    + \shsnorm{\btau_h}^2 + \shunorm{\bv_h}^2, \quad \forall (\btau_h,
    \bv_h) \in \bSi_h \times \bV_h.
  \end{aligned}
  \label{eq_wtmJh}
\end{equation}
where $\Bu_h(\cdot; \cdot)$ is defined as in \eqref{eq_BshBuh} and
$\wt{B}_h^{\bsigma}(\cdot; \cdot)$ is defined as 
\begin{equation}
  \wt{B}_h^{\bsigma}(\btau_h; \bm{a}) := \| \jumpn{\btau_h} - \bm{a}
  \|_{-1/2, h}^2 + h \| \jumpn{\btau_h} - \bm{a}
  \|_{L^2(\Gamma)}^2, \quad \forall \btau_h \in \bSi_h.
  \label{eq_wtBhs}
\end{equation}
The numerical solutions are sought by minimizing the 
functional $\wt{\mJ}_h(\cdot; \cdot)$ over the finite element spaces
$\bSi_h^m \times \bV_h^m$.
This minimization problem is equivalent to a variational problem by
writing the Euler-Lagrange equation, which reads: 
seek $(\bsigma_h, \bu_h) \in \bSi_h^m \times \bV_h^m$ such that 
\begin{equation}
  a_{\wt{\mJ}_h}(\bsigma_h, \bu_h; \btau_h, \bv_h) =
  l_{\wt{\mJ}_h}(\btau_h, \bv_h), \quad \forall(\btau_h, \bv_h) \in
  \bSi_h^m \times \bV_h^m,
  \label{eq_awtmJh}
\end{equation}
where the forms  $a_{\wt{\mJ}_h}(\cdot; \cdot)$  and
$l_{\wt{\mJ}_h}(\cdot)$ are defined as 
\begin{displaymath}
  \begin{aligned}
    &a_{\wt{\mJ}_h}(\btau_h, \bv_h; \brho_h, \bw_h)  := (\mA \btau_h -
    \beps(\bv_h), \mA \brho_h - \beps(\bw_h))_{L^2(\Omega_0 \cup
    \Omega_1)} + (\nabla \cdot \btau_h, \nabla \cdot,
    \brho_h)_{L^2(\Omega_0 \cup \Omega_1)} \\
    + &  (\jumpn{\btau_h}, \jumpn{\brho_h})_{-1/2, h} + h
    (\jumpn{\btau_h}, \jumpn{\brho_h})_{L^2(\Gamma)} + h^{-1}
    (\jump{\bv_h}, \jump{\bw_h})_{L^2(\Gamma)} + h
    (\jump{\nabla_{\Gamma} \bv_h},
    \jump{\nabla_{\Gamma} \bw_h})_{L^2(\Gamma)} \\
    + & \Shm(\btau_h, \brho_h) + \Ghm(\bv_h, \bw_h),
  \end{aligned}
\end{displaymath}
and 
\begin{displaymath}
  \begin{aligned}
    l_{\wt{\mJ}_h}(\btau_h, \bv_h) := & -(\nabla \cdot \btau_h,
    \bm{f})_{L^2(\Omega_0 \cup \Omega_1)} + (\jumpn{\btau_h},
    \bm{a})_{-1/2, h} + h(\jumpn{\btau_h}, \bm{a})_{L^2(\Gamma)} \\
    + & h^{-1} (\jump{\bv_h}, \bm{b})_{L^2(\Gamma)} + h
    (\jump{\nabla_{\Gamma} \bv_h}, \nabla_{\Gamma}
    \bm{b})_{L^2(\Gamma)}.
  \end{aligned}
\end{displaymath}
From \eqref{eq_awtmJh}, the convergence analysis is still derived
under the Lax-Milgram framework.  We introduce the energy norm
$\wtehnorm{\cdot}$ by
\begin{displaymath}
  \begin{aligned}
    \wtehnorm{(\btau_h, \bv_h)}^2 := & \| \btau_h\|_{H(\div; \Omega_0
    \cup \Omega_1)}^2 + \| \bv_h \|_{H^1(\Omega_0 \cup \Omega_1)}^2 + 
    \| \jump{\btau_h} \|_{-1/2, h}^2 + h \| \jump{\btau_h}
    \|_{L^2(\Gamma)}^2  \\
    + & h^{-1} \| \jump{\bv_h} \|_{L^2(\Gamma)}^2 + h \|
    \jump{\nabla_{\Gamma} \bv_h} \|_{L^2(\Gamma)}^2 +
    \shsnorm{\btau_h}^2 + \shunorm{\bv_h}^2, \quad \forall(\btau_h,
    \bv_h) \in \bSi_h \times \bV_h. 
  \end{aligned}
\end{displaymath}
From Lemma \ref{le_d12normbound}, there holds 
\begin{displaymath}
  \Bs(\btau_h; \bm{0}) = \| \jump{\btau_h} \|_{H^{-1/2}(\Gamma)}^2
  \leq C ( \| \jump{\btau_h} \|_{-1/2, h}^2 + h \| \jump{\btau_h}
  \|_{L^2(\Gamma)}^2) = \wt{B}_h^{\bsigma}(\btau_h; \bm{0}) , \quad
  \forall \btau_h \in \bSi_h.
\end{displaymath}
Together with the estimate \eqref{eq_Buvh}, we can know that 
\begin{displaymath}
  \begin{aligned}
    \mJ(\btau_h, \bv_h; \bm{0},
    \bm{0}, \bm{0}) &\leq C \wt{\mJ}_h(\btau_h, \bv_h; \bm{0}, \bm{0},
    \bm{0}) 
    = C a_{\wt{\mJ}_h}(\btau_h, \bv_h; \btau_h, \bv_h), \quad \forall
    (\btau_h, \bv_h) \in \bSi_h \times \bV_h.
  \end{aligned}
\end{displaymath}
As Lemma \ref{le_amJhbc}, it is similar to get that
$a_{\wt{\mJ}_h}(\cdot;
\cdot)$ is bounded and coercive under the energy norm
$\wtehnorm{\cdot}$, and satisfies the weakened Galerkin orthogonality
property.
\begin{lemma}
  There exist constants $C$ such that 
  \begin{align}
    a_{\wt{\mJ}_h}(\btau_h, \bv_h; \brho_h, \bw_h) & \leq C
    \wtehnorm{(\btau_h, \bv_h)} \wtehnorm{(\brho_h, \bw_h)}, && \forall
    (\btau_h, \bv_h), (\brho_h, \bw_h) \in \bSi_h \times \bV_h,
    \label{eq_awtmJhbound} \\
    a_{\wt{\mJ}_h}(\btau_h, \bv_h; \btau_h, \bv_h) & \geq C \wtehnorm{
    ( \btau_h, \bv_h)}^2, && \forall (\btau_h, \bv_h) \in \bSi_h^m
    \times \bV_h^m.
    \label{eq_awtmJhcoer}
  \end{align}
  \label{le_awtmJhbc}
\end{lemma}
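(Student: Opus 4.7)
The plan is to mirror the proof of Lemma \ref{le_amJhbc}, with the discrete minus-norm trace contribution of \eqref{eq_wtBhs} playing the role of the $L^2$ normal-jump contribution. Before either bound, I first record that $(\cdot,\cdot)_{-1/2,h}$ is a genuine symmetric inner product. Indeed, for $\bv,\bq \in \bH^{-1/2}(\Gamma)$, substituting $\bt_h = T_h \bq$ in the weak form \eqref{eq_dH12weak} defining $T_h \bv$ yields
\begin{displaymath}
(\bv,\bq)_{-1/2,h} \;=\; (\nabla T_h \bv, \nabla T_h \bq)_{L^2(\Omega_0)} + (T_h \bv, T_h \bq)_{L^2(\Omega_0)} + s_{h,0}^1(T_h \bv, T_h \bq),
\end{displaymath}
which is manifestly symmetric. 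In particular, the Cauchy--Schwarz inequality $|(\bv,\bq)_{-1/2,h}| \le \|\bv\|_{-1/2,h}\|\bq\|_{-1/2,h}$ is available.

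For the boundedness \eqref{eq_awtmJhbound}, I would apply the Cauchy--Schwarz inequality term-by-term to each of the pairings constituting $a_{\wt{\mJ}_h}(\btau_h,\bv_h;\brho_h,\bw_h)$, bounding $\|\mA\btau_h - \beps(\bv_h)\|_{L^2}$ via \eqref{eq_Abound} and the triangle inequality, bounding $(\jumpn{\btau_h},\jumpn{\brho_h})_{-1/2,h}$ by the inner-product Cauchy--Schwarz just established, and bounding the other $L^2$-type and ghost-penalty pairings directly. Each resulting factor is one of the seminorms appearing in $\wtehnorm{\cdot}^2$, so the product telescopes into $C\,\wtehnorm{(\btau_h,\bv_h)}\,\wtehnorm{(\brho_h,\bw_h)}$.

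For the coercivity \eqref{eq_awtmJhcoer}, I split $a_{\wt{\mJ}_h}(\btau_h,\bv_h;\btau_h,\bv_h)$ into its seven diagonal summands. Six of them give exactly the corresponding squared seminorms in $\wtehnorm{(\btau_h,\bv_h)}^2$, namely $\|\nabla\cdot\btau_h\|_{L^2(\Omega_0\cup\Omega_1)}^2$, $\|\jumpn{\btau_h}\|_{-1/2,h}^2$, $h\|\jumpn{\btau_h}\|_{L^2(\Gamma)}^2$, $h^{-1}\|\jump{\bv_h}\|_{L^2(\Gamma)}^2$, $h\|\jump{\nabla_\Gamma \bv_h}\|_{L^2(\Gamma)}^2$, and $\shsnorm{\btau_h}^2+\shunorm{\bv_h}^2$. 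The only nontrivial task is therefore to recover the $H(\div)\times H^1$ control $\|\btau_h\|_{H(\div;\Omega_0\cup\Omega_1)}^2 + \|\bv_h\|_{H^1(\Omega_0\cup\Omega_1)}^2$. For this I would invoke the norm equivalence of Lemma \ref{le_enormequvi}, which reduces the task to bounding $\|\mA\btau_h - \beps(\bv_h)\|_{L^2(\Omega_0\cup\Omega_1)}^2$, $\|\nabla\cdot\btau_h\|_{L^2(\Omega_0\cup\Omega_1)}^2$, $\|\jumpn{\btau_h}\|_{H^{-1/2}(\Gamma)}^2$, and $\|\jump{\bv_h}\|_{H^{1/2}(\Gamma)}^2$ by the diagonal of $a_{\wt{\mJ}_h}$. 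The first two are diagonal terms; the third is handled by the key estimate \eqref{eq_d12normbound2} of Lemma \ref{le_d12normbound}, which yields
\begin{displaymath}
\|\jumpn{\btau_h}\|_{H^{-1/2}(\Gamma)}^2 \;\le\; C\bigl(\|\jumpn{\btau_h}\|_{-1/2,h}^2 + h\|\jumpn{\btau_h}\|_{L^2(\Gamma)}^2\bigr),
\end{displaymath}
both of which appear on the diagonal; and the fourth is handled by \eqref{eq_Buvh}, whose right-hand side $C\,\Bu_h(\bv_h;\bm{0})$ also appears on the diagonal.

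The main obstacle, and the place where the present lemma departs from the analogue in Lemma \ref{le_amJhbc}, is precisely the replacement of the crude bound \eqref{eq_Hm12sbound} by the sharper equivalence \eqref{eq_d12normbound2}; it is exactly this sharper bound that allows the $H^{-1/2}$-trace to be dominated by a computable discrete quantity without appealing to the $L^2$-trace (which would have required $s\ge 1$). Once this substitution is made, the rest of the proof is a routine bookkeeping of Cauchy--Schwarz and the equivalence Lemma \ref{le_enormequvi}, exactly as in the $L^2$-norm case.
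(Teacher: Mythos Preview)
Your proposal is correct and follows essentially the same approach as the paper: the paper's proof simply states that the argument is similar to Lemma~\ref{le_amJhbc}, relying on the bound $\|\jumpn{\btau_h}\|_{H^{-1/2}(\Gamma)}^2 \le C(\|\jumpn{\btau_h}\|_{-1/2,h}^2 + h\|\jumpn{\btau_h}\|_{L^2(\Gamma)}^2)$ from Lemma~\ref{le_d12normbound} (in place of \eqref{eq_Hm12sbound}) together with \eqref{eq_Buvh} and Lemma~\ref{le_enormequvi}, exactly as you outline. Your explicit verification that $(\cdot,\cdot)_{-1/2,h}$ is a symmetric inner product (hence satisfies Cauchy--Schwarz) is a detail the paper leaves implicit but which is indeed needed for the boundedness estimate.
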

\begin{lemma}
  Let $(\bsigma, \bu) \in \bSi^s \times \bV^{s+1}$ be the exact
  solution to \eqref{eq_problem}, let $(\bsigma_h, \bu_h) \in \bSi_h^m
  \times \bV_h^m$ be the numerical solution to \eqref{eq_amJh}, there
  holds
  \begin{equation}
    a_{\wt{\mJ}_h}(\bsigma - \bsigma_h, \bu - \bu_h; \btau_h, \bv_h) =
    \Shm(\bsigma, \btau_h) + \Ghm(\bu, \bv_h).
    \label{eq_awtmJhGalerkinorth}
  \end{equation}
  \label{le_awtmJhGalerkinorth}
\end{lemma}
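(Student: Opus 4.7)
The strategy mirrors exactly the proof of Lemma \ref{le_amJhGalerkinorth} given just above: I would substitute the exact solution $(\bsigma, \bu)$ into the bilinear form $a_{\wt{\mJ}_h}(\cdot;\cdot)$, simplify using the strong equations in \eqref{eq_problem}, and then subtract the discrete equation \eqref{eq_awtmJh}. Before doing the computation, the only genuine preliminary check is that the substitution is admissible, especially for the discrete minus inner product term. Because $\bsigma \in \bSi^s = \bH^s(\div; \Omega_0 \cup \Omega_1)$ with $s > 1/2$, each $\pi_i \bsigma$ has a normal trace in $\bH^{-1/2}(\Gamma)$, so $\jumpn{\bsigma} \in \bH^{-1/2}(\Gamma)$ and the action of $T_h$ through \eqref{eq_dH12weak} is well defined on it. The $h$-weighted $L^2(\Gamma)$ couplings are finite because $s > 1/2$ gives the Sobolev embeddings $\bH^s(\Omega_i) \hookrightarrow \bL^2(\Gamma)$ and $\bH^{s+1}(\Omega_i) \hookrightarrow \bH^1(\Gamma)$, so that $\jumpn{\bsigma}$ and $\jump{\nabla_\Gamma \bu}$ both lie in $\bL^2(\Gamma)$.

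Next I would evaluate $a_{\wt{\mJ}_h}(\bsigma, \bu; \btau_h, \bv_h)$ term by term. The constitutive equation $\mA \bsigma - \beps(\bu) = \bm{0}$ in $\Omega_0 \cup \Omega_1$ annihilates the first summand; the equilibrium equation $\nabla \cdot \bsigma = -\bm{f}$ turns the divergence coupling into $-(\bm{f}, \nabla \cdot \btau_h)_{L^2(\Omega_0 \cup \Omega_1)}$; and the jump conditions $\jumpn{\bsigma} = \ba$, $\jump{\bu} = \bb$ convert the four interface pairings into the corresponding right-hand side contributions in $l_{\wt{\mJ}_h}$, using in the last term that $\jump{\nabla_\Gamma \bu} = \nabla_\Gamma \jump{\bu} = \nabla_\Gamma \bb$. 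The residual contributions are precisely the ghost penalty terms $\Shm(\bsigma, \btau_h)$ and $\Ghm(\bu, \bv_h)$, which do not vanish because $\bsigma$ and $\bu$ are not piecewise polynomial. This yields
\begin{equation*}
a_{\wt{\mJ}_h}(\bsigma, \bu; \btau_h, \bv_h) = l_{\wt{\mJ}_h}(\btau_h, \bv_h) + \Shm(\bsigma, \btau_h) + \Ghm(\bu, \bv_h).
\end{equation*}

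Finally, subtracting \eqref{eq_awtmJh}, which states $a_{\wt{\mJ}_h}(\bsigma_h, \bu_h; \btau_h, \bv_h) = l_{\wt{\mJ}_h}(\btau_h, \bv_h)$, from the displayed identity delivers \eqref{eq_awtmJhGalerkinorth} at once. The argument is essentially bookkeeping and there is no real obstacle: once the $(\cdot,\cdot)_{-1/2,h}$-pairing on $\jumpn{\bsigma}$ has been justified through the embedding argument above, everything else is algebraic manipulation parallel to the $L^2$-norm case. The only conceptual point worth emphasizing is that $\Shm$ and $\Ghm$ appear as an inconsistency carried over from the stabilization; they are precisely the terms whose size is controlled by the weak consistency property \textbf{P2} and thus drive the optimal rate in the subsequent error estimate.
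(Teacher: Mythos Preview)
Your proposal is correct and follows essentially the same approach as the paper, which simply defers to Lemma~\ref{le_amJhGalerkinorth} by saying the proof is similar. Your added justification that $\jumpn{\bsigma}$ lies in $\bH^{-1/2}(\Gamma)$ (so that the $(\cdot,\cdot)_{-1/2,h}$-pairing is well defined) and the embedding checks for the $L^2(\Gamma)$ terms are useful clarifications the paper leaves implicit.
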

\begin{proof}
  The proofs are similar to Lemma \ref{le_amJhbc} and Lemma
  \ref{le_amJhGalerkinorth}. 
\end{proof}
Moreover, we give the approximation estimate under the norm
$\wtehnorm{\cdot}$.

\begin{lemma}
  Let $(\bsigma, \bu) \in \bSi^s \times
  \bV^{s+1} $ be the exact solution to the
  interface problem \eqref{eq_problem}, there exists $(\bsigma_I,
  \bu_I) \in \bSi_h^m \times \bV_h^m$ such that
  \begin{equation}
    \wtehnorm{(\bsigma - \bsigma_I, \bu - \bu_I)} \leq C h^t ( \|
    \bsigma \|_{H^s(\div; \Omega_0 \cup \Omega_1)} + \| \bu
    \|_{H^{s+1}(\Omega_0 \cup \Omega_1)}), 
    \label{eq_wtehapp}
  \end{equation}
  where $t = \min(m, s)$ for $s \geq 1$, and $t = \min(m, s) -
  \varepsilon$ with any $\varepsilon > 0$ for $1/2 < s < 1$.
  \label{le_wtehapp}
\end{lemma}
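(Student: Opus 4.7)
The plan is to choose $(\bsigma_I,\bu_I)$ as the natural componentwise interpolants of the extended fields. For $i=0,1$, take $\bsigma_{I,i}\in\bSi_{h,i}^m$ to be the canonical BDM interpolant of $\pi_i\bsigma=\wt{\bsigma}_i\in\bH^s(\div;\Omega)$, which is well defined since $s>1/2$, and $\bu_{I,i}\in\bV_{h,i}^m$ to be the Scott--Zhang interpolant of $\pi_i\bu=\wt{\bu}_i$. Set $\bsigma_I:=\bsigma_{I,0}\chi_0+\bsigma_{I,1}\chi_1$ and $\bu_I:=\bu_{I,0}\chi_0+\bu_{I,1}\chi_1$. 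The contributions of $\wtehnorm{\cdot}$ then split into four groups: the volume norms on $\Omega_0\cup\Omega_1$, the ghost penalty seminorms, the interface $H^{-1/2}$-type contribution coming from $\wt{B}_h^{\bsigma}$, and the $L^2$ trace contributions on $\Gamma$.

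First I would estimate $\|\bsigma-\bsigma_I\|_{H(\div;\Omega_0\cup\Omega_1)}$ and $\|\bu-\bu_I\|_{H^1(\Omega_0\cup\Omega_1)}$ directly from the standard approximation estimates cited as \eqref{eq_spaceapp}, which hold for $s>1/2$ and yield the clean $h^t$ bound. The ghost penalty terms $\shsnorm{\bsigma-\bsigma_I}$ and $\shunorm{\bu-\bu_I}$ are controlled by property \textbf{P1} applied componentwise together with the interior $L^2$ approximation error, again giving $h^t$ with $t=\min(m,s)$. For the discrete minus norm piece $\|\jumpn{\bsigma-\bsigma_I}\|_{-1/2,h}$, I would invoke Lemma \ref{le_d12normbound} to reduce to controlling $\|\jumpn{\bsigma-\bsigma_I}\|_{H^{-1/2}(\Gamma)}+h^{1/2}\|\jumpn{\bsigma-\bsigma_I}\|_{L^2(\Gamma)}$; the first piece is dominated by $\|\bsigma-\bsigma_I\|_{H(\div;\Omega_0\cup\Omega_1)}$ via the continuous normal-trace operator $H(\div)\to H^{-1/2}(\Gamma)$, which requires no extra regularity and is the whole reason the minus norm is used.

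The main obstacle is the $L^2(\Gamma)$ trace factors $h\|\jumpn{\bsigma-\bsigma_I}\|_{L^2(\Gamma)}^2$, $h^{-1}\|\jump{\bu-\bu_I}\|_{L^2(\Gamma)}^2$, and $h\|\jump{\nabla_\Gamma(\bu-\bu_I)}\|_{L^2(\Gamma)}^2$. When $s\geq 1$ the argument from Lemma \ref{le_apperror} goes through verbatim: apply the $H^1$ trace estimate \eqref{eq_H1trace} element by element to $\pi_i\bsigma-\bsigma_{I,i}$ and to $\pi_i\bu-\bu_{I,i}$ (respectively $\nabla(\pi_i\bu-\bu_{I,i})$, using $s+1\geq 2$), combine with \eqref{eq_spaceapp}, and obtain the bound $h^t$ with $t=\min(m,s)$. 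For $1/2<s<1$ the $H^1$ trace estimate is no longer available for $\bsigma-\bsigma_I$ or for $\nabla(\bu-\bu_I)$, and this is the only genuinely new difficulty. I would replace \eqref{eq_H1trace} by a scaled fractional trace inequality of the form $\|v\|_{L^2(\partial K)}^2\leq C\bigl(h^{2r-1}|v|_{H^r(K)}^2+h^{-1}\|v\|_{L^2(K)}^2\bigr)$ valid for $v\in H^r(K)$ with $r>1/2$. Taking $r=s-\varepsilon$ for an arbitrary small $\varepsilon>0$ with $r>1/2$, combined with local stability of the interpolant in the $H^r$ seminorm and the $L^2$ approximation rate $h^s$, yields $h\|\jumpn{\bsigma-\bsigma_I}\|_{L^2(\Gamma)}^2\leq Ch^{2(s-\varepsilon)}\|\bsigma\|_{H^s}^2$, and an analogous estimate for the gradient jump of $\bu-\bu_I$. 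This is where the $\varepsilon$ appears, and where the argument necessarily differs from the $s\geq 1$ case.

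Collecting the four groups of estimates yields the required bound $\wtehnorm{(\bsigma-\bsigma_I,\bu-\bu_I)}\leq Ch^t\bigl(\|\bsigma\|_{H^s(\div;\Omega_0\cup\Omega_1)}+\|\bu\|_{H^{s+1}(\Omega_0\cup\Omega_1)}\bigr)$ with the two regimes for $t$ as stated. The robustness in $\lambda$ is automatic because none of the approximation constants depend on $\mA$, while robustness with respect to how $\Gamma$ cuts the mesh follows from the ghost penalty extensions used in Steps 3 and 4, together with the fact that the interpolants $\bsigma_{I,i},\bu_{I,i}$ are defined on the full fictitious domains $\Ohi$.
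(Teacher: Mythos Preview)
Your overall plan---choice of interpolants, treatment of the volume norms, ghost penalties via \textbf{P1}, and the discrete minus norm via \eqref{eq_d12normbound1} together with the continuous normal-trace $\bH(\div)\to\bH^{-1/2}(\Gamma)$---matches the paper, and for $s\geq 1$ your use of the $H^1$ trace estimate is exactly the paper's argument. Two minor remarks: for $\|\jumpn{\bsigma-\bsigma_I}\|_{-1/2,h}$ you only need the direction \eqref{eq_d12normbound1}, no $h^{1/2}\|\cdot\|_{L^2(\Gamma)}$ term; and the term $h^{-1}\|\jump{\bu-\bu_I}\|_{L^2(\Gamma)}^2$ is not an obstacle even when $1/2<s<1$, since $s+1>1$ and the $H^1$ trace \eqref{eq_H1trace} applies directly.

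Where you diverge from the paper is in handling $h\|\jumpn{\bsigma-\bsigma_I}\|_{L^2(\Gamma)}^2$ and $h\|\jump{\nabla_\Gamma(\bu-\bu_I)}\|_{L^2(\Gamma)}^2$ for $1/2<s<1$. You propose a local fractional trace inequality on the cut element (note: on $\Gamma_K$, not $\partial K$) combined with fractional stability of the BDM interpolant. The paper does something quite different: it introduces an auxiliary Scott--Zhang interpolant $\pi_i^{\bmr{s}}\bsigma$ into the $C^0$ tensor space $\mb{V}_h^m$, splits $\pi_i\bsigma-\bsigma_{I,i}=(\pi_i\bsigma-\pi_i^{\bmr{s}}\bsigma)+(\pi_i^{\bmr{s}}\bsigma-\bsigma_{I,i})$, bounds the first piece on $\Gamma$ by the \emph{global} embedding $\mb{H}^{1/2+\varepsilon}(\Omega_i)\hookrightarrow\mb{L}^2(\Gamma)$ together with the fractional approximation rate of Scott--Zhang (this is where the $\varepsilon$ enters), and bounds the second, purely discrete, piece via the $H^1$ trace \eqref{eq_H1trace} plus an inverse estimate, reducing it to $L^2$ on elements. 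The analogous embedding $\bH^{3/2+\varepsilon}(\Omega_i)\hookrightarrow\bH^1(\Gamma)$ handles the tangential-gradient term.

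Your route has a genuine gap: an estimate of the form $\|v\|_{L^2(\Gamma_K)}^2\leq C\bigl(h^{2r-1}|v|_{H^r(K)}^2+h^{-1}\|v\|_{L^2(K)}^2\bigr)$ with $C$ independent of how $\Gamma$ cuts $K$ is not available in the paper (Lemma~\ref{le_H1trace} gives only the $H^1$ case), and establishing it for a curved surface through a simplex, uniformly in the cut geometry, is nontrivial and would need separate justification. The paper's splitting trick sidesteps this completely by pushing the fractional regularity to a global Sobolev embedding and handling discrete remainders with inverse inequalities, so no cut-robust fractional trace is ever needed.
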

\begin{proof}
  We let $\bsigma_I := \bsigma_{I, 0} \cdot \chi_0 + \bsigma_{I, 1}
  \cdot \chi_1$, $\bu_I := \bu_{I, 0} \cdot \chi_0 + \bu_{I, 0} \cdot
  \chi_1$ be the interpolants of $\bsigma$ and $\bu$, which are
  defined in the proof in Lemma \ref{le_apperror}.
  The errors $\| \bsigma - \bsigma_I \|_{H(\div; \Omega_0 \cup
  \Omega_1)}$, $\| \bu - \bu_I \|_{H^1(\Omega_0 \cup \Omega_1)}$,
  $\shsnorm{\bsigma_I}$ and $\shunorm{\bu_I}$ have been estimated in 
  Lemma \ref{le_apperror}. We only bound the trace terms in
  $\wtehnorm{\cdot}$. For the error $\| \jumpn{\bsigma - \bsigma_I}
  \|_{-1/2, h}$, we apply the estimate \eqref{eq_d12normbound1}, the
  approximation properties \eqref{eq_spaceapp} and the trace theorem
  of functions in $\bH(\div; \Omega_0 \cup \Omega_1)$ to find that
  \begin{align*}
    &\| \jumpn{\bsigma -\bsigma_I} \|_{-1/2, h}  \leq C  \| \jumpn{
    \bsigma - \bsigma_I} \|_{-1/2, \Gamma} \leq C  \| \jumpn{ \bsigma
    - \bsigma_I} \|_{H^{-1/2}(\Gamma)} \\
    & \leq C \sum_{i = 0,1} \| \un \cdot (\pi_i \bsigma  - \pi_i
    \bsigma_I) \|_{H^{-1/2}(\Gamma)}  \leq C \sum_{i = 0, 1} \| \pi_i
    \bsigma - \pi_i \bsigma_I \|_{H(\div; \Omega_i)} \leq Ch^{t_0}
    \| \bsigma \|_{H(\div; \Omega_0 \cup \Omega_1)},
  \end{align*}
  where $t_0 := \min(m, s)$. 
  Since $\bu \in \bH^{1+s}(\Omega_0 \cup \Omega_1)$, the error $h^{-1}
  \| \jump{\bu - \bu_I} \|_{L^2(\Gamma)}^2$ can be directly estimated
  by the $H^1$ trace estimate \eqref{eq_H1trace}, 
  \begin{displaymath}
    h^{-1} \| \jump{\bu - \bu_I} \|_{L^2(\Gamma)}^2 \leq Ch^{2t_0} \|
    \bu \|_{H^{s+1}(\Omega_0 \cup \Omega_1)}^2,
  \end{displaymath}
  which is the same as in the proof to Lemma \ref{le_apperror}.

  The rest is to bound the terms $ h \| \jumpn{\bsigma - \bsigma_I}
  \|_{L^2(\Gamma)}^2$ and $ h \| \jump{\nabla_{\Gamma} ( \bu - \bu_I)}
  \|_{L^2(\Gamma)}^2$. For the case $s \geq 1$, both errors can be
  bounded by the $H^1$ trace estimate, and we derive that
  \begin{align}
    h \| \jumpn{\bsigma - &\bsigma_I} \|_{L^2(\Gamma)}^2  \leq Ch
    \sum_{i = 0, 1} \| \pi_i \bsigma - \pi_i \bsigma_I
    \|_{L^2(\Gamma)}^2  \leq Ch \sum_{i = 0, 1} \sum_{K \in \MThG} \|
    \pi_i \bsigma - \pi_i \bsigma_I \|_{L^2(\Gamma_K)}^2 \nonumber \\
    & \leq C \sum_{i = 0, 1} \sum_{K \in \MThG} ( \| \pi_i \bsigma -
    \pi_i \bsigma_I \|_{L^2(K)}^2 + h^2 \| \pi_i \bsigma - \pi_i
    \bsigma_I \|_{H^1(K)}^2) \leq C h^{2t_0} \| \bsigma
    \|_{H^s(\Omega_0 \cup \Omega_1)}, \label{eq_bsigmabsigmaI}
  \end{align}
  and similarly, there holds
  \begin{displaymath}
    h \| \jump{\nabla_{\Gamma}(\bu - \bu_I)}
    \|_{L^2(\Gamma)}^2 \leq C h^{2t_0} \| \bu \|_{H^{s+1}(\Omega_0 \cup
    \Omega_1)}.
  \end{displaymath}
  For the case $1/2 < s < 1$, 
   the error $h \| \jumpn{\bsigma - \bsigma_I}
  \|_{L^2(\Gamma)}^2$ cannot be bounded as \eqref{eq_bsigmabsigmaI}
  because the $H^1$ trace estimate is now unavailable.
  In this case, the embedding theory will be the main tool to
  estimate the errors. 
  We let $\mb{V}_h^m \subset \mb{H}^1(\Omega)$ be
  the tensor-valued $C^0$ finite element space of degree $m$ on the
  mesh $\MTh$.  Since $\pi_i \bsigma \in \mb{H}^s(\Omega)$, we let
  $\pi_i^{\bmr{s}} \bsigma$ be its Scott-Zhang interpolant into the
  space $\mb{V}_h^m$ \cite{Ciarlet2013analysis}, which satisfies that 
  $\| \pi_i^{\bmr{s}} \bsigma - \pi_i \bsigma \|_{L^2(\Omega)} \leq C
  h^{t_0} \| \pi_i \bsigma \|_{H^s(\Omega)}$.
  For arbitrarily small $\varepsilon > 0$, 
  the space  $\mb{H}^{1/2 + \varepsilon}(\Omega_i)$
  continuously embeds into $\mb{L}^2(\Gamma)$. 
  Notice that 
  $\pi_i^{\bmr{s}} \bsigma, \pi_i
  \bsigma \in \mb{H}^{1/2 + \varepsilon}(\Omega_i)$ for small enough
  $\varepsilon$, 
  and we apply the
  inverse estimate and the triangle inequality to derive that
  \begin{align*}
    &h \| \jump{\bsigma - \bsigma_I} \|_{L^2(\Gamma)}^2  \leq
    Ch \sum_{i = 0, 1} \| \pi_i \bsigma - \pi_i \bsigma_I
    \|_{L^2(\Gamma)}^2 \\
    \leq  C&h\sum_{i = 0, 1}  \| \pi_i \bsigma - \pi_i^{\bmr{s}}
    \bsigma \|_{L^2(\Gamma)}^2  + Ch\sum_{i = 0, 1}  \|
    \pi_i^{\bmr{s}} \bsigma -  \pi_i \bsigma_I \|_{L^2(\Gamma)}^2   \\
    \leq  C&h\sum_{i = 0, 1}  \| \pi_i \bsigma - \pi_i^{\bmr{s}}
    \bsigma \|_{H^{1/2 + \varepsilon}(\Omega)}^2 + C  \sum_{i = 0, 1}
    \sum_{K \in \MThG} \| \pi_i^{\bmr{s}} \bsigma - \pi_i \bsigma_I
    \|_{L^2(K)}^2  \\
    \leq  C&h\sum_{i = 0, 1}  \| \pi_i \bsigma - \pi_i^{\bmr{s}}
    \bsigma \|_{H^{1/2 + \varepsilon}(\Omega)}^2 + C  \sum_{i = 0, 1}
   \| \pi_i^{\bmr{s}} \bsigma - \pi_i \bsigma_I
    \|_{L^2(\Ohi)}^2  \\
    \leq C&h^{2t_1} \| \bsigma \|_{H^s(\Omega_0 \cup \Omega_1)}^2 + C
    \sum_{i = 0, 1} ( \| \pi_i \bsigma -
    \pi_i^{\bmr{s}} \bsigma \|^2_{L^2(\Ohi)} + \| \pi_i \bsigma  -
    \pi_i \bsigma_I \|^2_{L^2(\Ohi)})  \leq Ch^{2t_1} \| \bsigma
    \|_{H^s(\div; \Omega_0 \cup \Omega_1)}^2,
  \end{align*}
  where $t_1 = \min(m, s) - \varepsilon$. 
  From the  embedding $\bH^{3/2 +
  \varepsilon}(\Omega_i) \hookrightarrow \bH^1(\Gamma)$
  \cite{Ding1996proof}, it is similar to get that
  \begin{displaymath}
    h \| \jump{\nabla_{\Gamma} (\bu - \bu_I)}
    \|_{L^2(\Gamma)}^2 \leq C h^{2t_1} \| \bu \|_{H^{s+1}(\Omega_0 \cup
    \Omega_1)}^2.
  \end{displaymath}
  Collecting all above estimates leads to the
  approximation property \eqref{eq_wtehapp}, which completes the
  proof.
\end{proof}
The error estimation for the numerical solution of \eqref{eq_awtmJh}
is reached. The proof is the same as Theorem \ref{th_error}.
\begin{theorem}
  Let $(\bsigma, \bu) \in \bSi^s \times
  \bV^{s+1}$ be the exact solution to the
  interface problem \eqref{eq_problem}, and let $(\bsigma_h, \bu_h)
  \in \bSi_h^m \times \bV_h^m$ be the numermical solution to the
  problem \eqref{eq_awtmJh}, there holds 
  \begin{equation}
    \wtehnorm{(\bsigma - \bsigma_h)} \leq Ch^t ( \| \bsigma
    \|_{H^s(\div; \Omega_0 \cup \Omega_1)} + \| \bu
    \|_{H^{s+1}(\Omega_0 \cup \Omega_1)}),
    \label{eq_wtJherror}
  \end{equation}
  where $t = \min(m, s)$ for $s \geq 1$, and $t = \min(m, s) -
  \varepsilon$ with any $\varepsilon > 0$ for $1/2 < s < 1$.
  \label{th_wtJherror}
\end{theorem}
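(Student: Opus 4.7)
The plan is to follow exactly the same Lax-Milgram/Céa-type argument used in the proof of Theorem \ref{th_error}, but now with the bilinear form $a_{\wt{\mJ}_h}(\cdot;\cdot)$, the energy norm $\wtehnorm{\cdot}$, and the approximation estimate from Lemma \ref{le_wtehapp}. All three ingredients needed for this abstract framework are already in place: coercivity and boundedness from Lemma \ref{le_awtmJhbc}, the weakened Galerkin orthogonality from Lemma \ref{le_awtmJhGalerkinorth}, and the interpolation bound from Lemma \ref{le_wtehapp}.

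Concretely, I would fix an arbitrary pair $(\btau_h, \bv_h) \in \bSi_h^m \times \bV_h^m$ and start from the coercivity estimate \eqref{eq_awtmJhcoer} applied to $(\bsigma_h - \btau_h, \bu_h - \bv_h)$. Splitting $\bsigma_h - \btau_h = (\bsigma - \btau_h) - (\bsigma - \bsigma_h)$ (and similarly for $\bu$) and invoking the orthogonality \eqref{eq_awtmJhGalerkinorth} to replace the discrete error by the continuous interpolation error modulo the two ghost-penalty remainders, I would then apply the continuity \eqref{eq_awtmJhbound} together with the Cauchy–Schwarz inequality to obtain
\begin{equation*}
\wtehnorm{(\bsigma_h - \btau_h, \bu_h - \bv_h)} \leq C\bigl( \wtehnorm{(\bsigma - \btau_h, \bu - \bv_h)} + \shsnorm{\bsigma} + \shunorm{\bu} \bigr).
\end{equation*}
The weak consistency property \textbf{P2} in \eqref{eq_weakconsist} (extended componentwise) gives $\shsnorm{\bsigma} \leq C h^{\min(m,s)} \|\bsigma\|_{H^s(\Omega_0 \cup \Omega_1)}$ and $\shunorm{\bu} \leq C h^{\min(m+1,s+1)} \|\bu\|_{H^{s+1}(\Omega_0 \cup \Omega_1)}$, so these penalty contributions are absorbed into the stated convergence rate.

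A triangle inequality on $(\bsigma - \bsigma_h, \bu - \bu_h)$ then reduces the entire estimate to bounding $\wtehnorm{(\bsigma - \btau_h, \bu - \bv_h)}$ by the right-hand side of \eqref{eq_wtJherror}. Because $(\btau_h, \bv_h)$ is arbitrary, I would take $(\btau_h, \bv_h) = (\bsigma_I, \bu_I)$ provided by Lemma \ref{le_wtehapp}, and the claimed bound follows, with the two cases $s \geq 1$ and $1/2 < s < 1$ inherited directly from the case distinction in that lemma (the $\varepsilon$-loss in the low-regularity regime coming from the embedding argument used there to replace the unavailable $H^1$ trace estimate).

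There is no real obstacle here; the work was already done in the preceding lemmas. The only point worth being careful about is ensuring that the orthogonality identity \eqref{eq_awtmJhGalerkinorth} is applied against the discrete test function $(\bsigma_h - \btau_h, \bu_h - \bv_h) \in \bSi_h^m \times \bV_h^m$ (so that coercivity is admissible), and that the constants in the weak consistency bounds for $\Shm$ and $\Ghm$ remain independent of $\lambda$ and of how $\Gamma$ cuts the mesh, which is guaranteed by the construction of the penalty forms in Section \ref{sec_preliminaries}.
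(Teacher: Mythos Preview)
Your proposal is correct and follows exactly the approach indicated in the paper, which simply states that the proof is the same as Theorem \ref{th_error}: coercivity \eqref{eq_awtmJhcoer} on the discrete error, the weakened Galerkin orthogonality \eqref{eq_awtmJhGalerkinorth}, boundedness \eqref{eq_awtmJhbound}, the weak consistency bounds on $\shsnorm{\bsigma}$ and $\shunorm{\bu}$, a triangle inequality, and finally the approximation estimate of Lemma \ref{le_wtehapp}.
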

The condition number of the linear system of
$a_{\wt{\mJ}_h}(\cdot; \cdot)$ also has a uniform upper bound.  The
proof is the same to Theorem \ref{th_amJhcont}.
\begin{theorem}
  There exists $C$ such that
  \begin{equation}
    \kappa(A_{{\wt{\mJ}_h}}) \leq C h^{-2},
    \label{eq_wtacont}
  \end{equation}
  where $A_{\wt{\mJ}_h}$ is the linear system with respect to
  $a_{\wt{\mJ}_h}(\cdot; \cdot)$.
  \label{th_wtacont}
\end{theorem}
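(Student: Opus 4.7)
The plan is to mirror the proof of Theorem \ref{th_amJhcont} line by line, substituting $\wtehnorm{\cdot}$ for $\ehnorm{\cdot}$ throughout. Since Lemma \ref{le_awtmJhbc} provides boundedness and coercivity of $a_{\wt{\mJ}_h}(\cdot;\cdot)$ with respect to the energy norm $\wtehnorm{\cdot}$, the framework of \cite[Section 3.2]{Ern2006evaluation} reduces the condition number estimate $\kappa(A_{\wt{\mJ}_h}) \leq Ch^{-2}$ to establishing the norm equivalence
\begin{equation*}
  \sum_{i = 0,1} \bigl( \| \pi_i \btau_h \|_{L^2(\Ohi)}^2 + \| \pi_i \bv_h \|_{L^2(\Ohi)}^2 \bigr) \leq C \wtehnorm{(\btau_h, \bv_h)}^2 \leq C h^{-2} \sum_{i = 0,1} \bigl( \| \pi_i \btau_h \|_{L^2(\Ohi)}^2 + \| \pi_i \bv_h \|_{L^2(\Ohi)}^2 \bigr)
\end{equation*}
for all $(\btau_h, \bv_h) \in \bSi_h^m \times \bV_h^m$.

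The lower bound is handled exactly as in Theorem \ref{th_amJhcont}: since $\wtehnorm{\cdot}$ dominates $\enorm{\cdot}$ by construction, the $L^2(\Omega_0 \cup \Omega_1)$ norms of $\btau_h$ and $\bv_h$ are bounded by $C\wtehnorm{(\btau_h, \bv_h)}$ via Lemma \ref{le_enormequvi}, and then \eqref{eq_btL2bound}--\eqref{eq_buL2bound} lift these to the extended domains $\Oho$ and $\Ohl$ using the ghost penalty seminorms already present in $\wtehnorm{\cdot}$.

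For the upper bound, the terms $\| \btau_h \|_{H(\div;\Omega_0\cup\Omega_1)}^2$, $\| \bv_h \|_{H^1(\Omega_0\cup\Omega_1)}^2$, $h^{-1}\|\jump{\bv_h}\|_{L^2(\Gamma)}^2$, $h\|\jump{\nabla_\Gamma \bv_h}\|_{L^2(\Gamma)}^2$, $\shsnorm{\btau_h}^2$ and $\shunorm{\bv_h}^2$ are all controlled by $Ch^{-2}\sum_i(\|\pi_i\btau_h\|_{L^2(\Ohi)}^2+\|\pi_i\bv_h\|_{L^2(\Ohi)}^2)$ by the same triangle-inequality-plus-inverse-estimate argument used for $\ehnorm{\cdot}$ in Theorem \ref{th_amJhcont}. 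The only genuinely new contribution is the discrete minus norm piece $\|\jumpn{\btau_h}\|_{-1/2,h}^2 + h\|\jumpn{\btau_h}\|_{L^2(\Gamma)}^2$. Here Lemma \ref{le_d12normbound} gives $\|\jumpn{\btau_h}\|_{-1/2,h} \leq C\|\jumpn{\btau_h}\|_{-1/2,\Gamma}$, and the equivalence of $\|\cdot\|_{-1/2,\Gamma}$ with $\|\cdot\|_{H^{-1/2}(\Gamma)}$ together with the continuous embedding $\bL^2(\Gamma) \hookrightarrow \bH^{-1/2}(\Gamma)$ yields $\|\jumpn{\btau_h}\|_{-1/2,h}^2 \leq C\|\jumpn{\btau_h}\|_{L^2(\Gamma)}^2$.

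The remaining task, which I expect to be the only subtle point, is bounding $\|\jumpn{\btau_h}\|_{L^2(\Gamma)}^2$ by $h^{-1}\sum_i\|\pi_i\btau_h\|_{L^2(\Ohi)}^2$. Applying the $H^1$ trace estimate \eqref{eq_H1trace} on each cut element $K\in\MThG$ to the piecewise polynomial $\pi_i\btau_h$, then using the inverse estimate $\|\pi_i\btau_h\|_{H^1(K)} \leq Ch^{-1}\|\pi_i\btau_h\|_{L^2(K)}$ and summing over $\MThG$, gives $\|\pi_i\btau_h\|_{L^2(\Gamma)}^2 \leq Ch^{-1}\|\pi_i\btau_h\|_{L^2(\Ohi)}^2$. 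Consequently $\|\jumpn{\btau_h}\|_{-1/2,h}^2 \leq Ch^{-2}\sum_i\|\pi_i\btau_h\|_{L^2(\Ohi)}^2$ and $h\|\jumpn{\btau_h}\|_{L^2(\Gamma)}^2 \leq C\sum_i\|\pi_i\btau_h\|_{L^2(\Ohi)}^2$, completing the upper bound. The conclusion \eqref{eq_wtacont} then follows from \cite[Corollary 3.4]{Ern2006evaluation} and \cite[Section 2.6]{Massing2019stabilized}.
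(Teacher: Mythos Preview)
Your proposal is correct and follows exactly the approach the paper intends: the paper's own proof is the single sentence ``The proof is the same to Theorem \ref{th_amJhcont},'' and you have spelled out precisely what changes when $\ehnorm{\cdot}$ is replaced by $\wtehnorm{\cdot}$. Your handling of the only new ingredient, the discrete minus norm term, via $\|\cdot\|_{-1/2,h} \leq C\|\cdot\|_{-1/2,\Gamma} \leq C\|\cdot\|_{L^2(\Gamma)}$ followed by the trace-plus-inverse estimate, is the natural way to close the argument and is consistent with the tools already in the paper.
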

From Theorem \ref{th_wtJherror}, it can be seen that the scheme is
also robust since the constants in the error bounds
\eqref{eq_wtJherror} are independent of $\lambda$ and the relative
location between $\Gamma$ and the mesh.
Compared with the $L^2$ norm least squares finite element method, 
the convergence rate is nearly optimal without a higher regularity
assumption, but the linear system is more complicated. 

Ultimately, let us give some details about solving the linear system
$A_{\wt{\mJ}_h}$. 
Since $a_{\wt{\mJ}_h}(\cdot; \cdot)$ involves the discrete minus inner
product $(\cdot, \cdot)_{-1/2, h}$, we are required to solve the
elliptic system \eqref{eq_dH12weak} in the assembling of
$A_{\wt{\mJ}_h}$. Let $B$ be the sparse matrix of the bilinear form
\eqref{eq_dH12weak}, which is invertible and satisfy that $\kappa(B) \leq
Ch^{-2}$, see Remark \ref{re_Bcond}.
Let $C$ be the sparse matrix corresponding to the inner product
$(\jump{\btau_h}, \bw_h)_{L^2(\Gamma)}$ for $\forall (\btau_h, \bv_h)
\in \bSi_h^m \times \bV_h^m, \forall \bw_h \in \bV_{h, 0}^1$. 
We note that this definition ensures the matrix $C$ has the same
column size as the matrix $A_{\wt{\mJ}_h}$.  Then, $A_{\wt{\mJ}_h}$
has the form that
\begin{equation}
  A_{\wt{\mJ}_h} = A_{L^2} + C^TB^{-1} C,
  \label{eq_Adecop}
\end{equation}
where $A_{L^2}$ corresponds to all $L^2$ inner products in
$a_{\wt{\mJ}_h}(\cdot, \cdot)$, i.e. $A_{L^2}$ is the matrix of the 
bilinear form $a_{L^2}(\cdot; \cdot)$ that
\begin{align}
  &a_{L^2}(\btau_h, \bv_h; \brho_h, \bw_h)  := (\mA \btau_h -
  \beps(\bv_h), \mA \brho_h - \beps(\bw_h))_{L^2(\Omega_0 \cup
  \Omega_1)} + (\nabla \cdot \btau_h, \nabla \cdot,
  \brho_h)_{L^2(\Omega_0 \cup \Omega_1)} 
  \label{eq_aL2} \\
  +  h
  (\jumpn{\btau_h}&, \jumpn{\brho_h})_{L^2(\Gamma)} + h^{-1}
  (\jump{\bv_h}, \jump{\bw_h})_{L^2(\Gamma)} + h
  (\jump{\nabla_{\Gamma} \bv_h},
  \jump{\nabla_{\Gamma} \bw_h})_{L^2(\Gamma)} 
  + \Shm(\btau_h, \brho_h) + \Ghm(\bv_h, \bw_h). \nonumber
\end{align}
Suppose that we have a fast algorithm to solve the linear
system $B \bmr{y} = \bmr{z}$, 
then we can easily compute the
matrix-vector products for the matrix $A_{\wt{\mJ}_h}$ by
\eqref{eq_Adecop}. Consequently, the Krylov iterative method (e.g.
GMRES) can be used as the solver for the linear system. 
Although we have shown that the condition number of
$A_{\wt{\mJ}_h}$ is $O(h^{-2})$, an effective preconditioner is still
expected especially when $h$ tends to zero. 
But $A_{\wt{\mJ}_h}$ involves the inverse matrix $B^{-1}$, it is
costly to form it and it is also not convenient to even extract its
diagonal. Traditional diagonal or block diagonal preconditioners are
hard to use for $A_{\wt{\mJ}_h}$. 
One method is to use the matrix-free preconditioning technique to
$A_{\wt{\mJ}_h}$. For example, one can use the hierarchically
semiseparable (HSS) approximation for the given matrix to accelerate
the convergence of iterative methods \cite{Chandrasekaran2006fast,
Xi2014fast}. The construction of the HSS approximation may 
potentially only use matrix-vector products instead of the original
matrix itself, and we refer to \cite{Lin2011fast, Xi2014fast} for
fully matrix-free techniques to the construction of the HSS
approximation. This idea has been used in the immersed finite element 
method solving the elliptic interface problem. Once the HSS
approximation $H$ for $A_{\wt{\mJ}_h}$ is obtained in a structured
form, it can be quickly factorized and the factors can be used as a
preconditioner. Alternatively, we present a matrix-explicit
preconditioning method for $A_{\wt{\mJ}_h}$. As \eqref{eq_Adecop}, 
the matrix $A_{\wt{\mJ}_h}$  can be split into two parts. 
From \eqref{eq_aL2}, it can be easily seen that 
$a_{L^2}(\btau_h, \bv_h; \btau_h, \bv_h) = 0$ implies $\btau_h =
\bm{0}$, $\bv_h = \bm{0}$ for $(\btau_h, \bv_h) \in \bSi_h^m \times
\bV_h^m$. Hence, $A_{L^2}$ is symmetric positive
definite. Any preconditioning technique can be applied to $A_{L^2}$ 
because $A_{L^2}$ is entirely explicit. As a numerical observation, 
the preconditioner from $A_{L^2}$ can also significantly accelerate
the iterative methods. We show the results in Example 1 of Section
\ref{sec_numericalresults} by constructing the HSS
approximation from $A_{L^2}$ to obtain the preconditioner. 
For solving the linear system $B \bmr{y} = \bmr{z}$, we also use the
HSS approximation as the preconditioner. The codes of the construction
and the factorization of the HSS approximation are freely available in
STRUMPACK \cite{Rouet2016strumpack}.
A comprehensive analysis and a more appropriate method to solve the
linear system are now left in the future study.

\section{Numerical Results}
\label{sec_numericalresults}
In this section, a series of numerical tests are presented to show
the numerical performance of the proposed method. For all tests, the
source function $\bm{f}$ and the jump condition $\ba, \bb$ are taken
from the exact solution accordingly.
In Example 1 - Example 5, we consider the
elasticity interface problems in two dimensions on the squared domain
$\Omega = (-1, 1)^2$. We adopt a family of triangular meshes with
the mesh size $h = 1/5, 1/10, \ldots, 1/40$ to solve all tests.  In
Example 1 - Example 3, the interface $\Gamma$ is of class $C^2$ and
is described by a level set function, see
Fig.~\ref{fig_2ddomaininterface}. 
In Example 4 - Example 5, the
interface is taken to be the boundary of the L-shaped domain.
In Example 6, we solve a three-dimensional interface problem to
illustrate the numerical performance.

\begin{figure}[htp]
  \centering
  \begin{minipage}[t]{0.23\textwidth}
    \centering
    \begin{tikzpicture}[scale=1.35]
      \centering
      \input{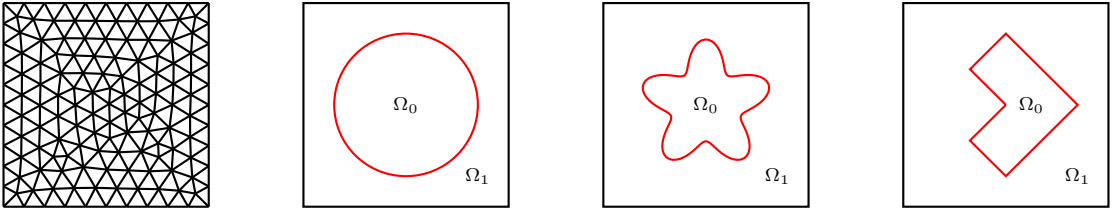}
    \end{tikzpicture}
  \end{minipage}
 \begin{minipage}[t]{0.23\textwidth}
    \centering
    \begin{tikzpicture}[scale=1.35]
      \centering
      \node at (0, 0) {\tiny $\Omega_0$};
      \node at (0.7, -0.7) {\tiny $\Omega_1$};
      \draw[thick, black] (-1.0, -1.0) rectangle (1.0, 1.0);
      \draw[thick, red] (0, 0)  circle [radius=0.7];
    \end{tikzpicture}
  \end{minipage}
 \begin{minipage}[t]{0.23\textwidth}
    \centering
    \begin{tikzpicture}[scale=1.35]
      \centering
      \node at (0, 0) {\tiny $\Omega_0$};
      \node at (0.7, -0.7) {\tiny $\Omega_1$};
      \draw[thick, black] (-1.0, -1.0) rectangle (1.0, 1.0);
      \draw[thick, domain=0:360, red, samples=120] plot (\x:{(0.5 +
      sin(\x*5)/7)*1.00});
    \end{tikzpicture}
  \end{minipage}
  \begin{minipage}[t]{0.23\textwidth}
    \centering
    \begin{tikzpicture}[scale=1.35]
      \centering
      \node at (0.25, 0) {\tiny $\Omega_0$};
      \node at (0.7, -0.7) {\tiny $\Omega_1$};
      \draw[thick, black] (-1.0, -1.0) rectangle (1.0, 1.0);
      \draw[thick, red] (0, 0) -- (-0.35, 0.35) -- (0, 0.7) -- (0.7,
      0.0) -- (0, -0.7) -- (-0.35, -0.35) -- (0, 0);
    \end{tikzpicture}
  \end{minipage}
  \caption{The unfitted mesh and the interfaces in two dimensions.}
  \label{fig_2ddomaininterface}
\end{figure}

\paragraph{\textbf{Example 1.}}
We first consider a linear elasticity interface problem with a circular
interface centered at the origin with radius $r = 0.7$, see
Fig.~\ref{fig_2ddomaininterface}. We solve this problem to study the
convergence rates.  The exact solution is taken as 
\begin{displaymath}
  \bm{u}(x, y) = \begin{bmatrix}
    \sin(2\pi y)(-1 + \cos(2\pi x)) + \frac{1}{1 + \lambda} \sin(\pi
    x)\sin(\pi y) \\
    \sin(2 \pi x)(1 - \cos(2\pi y)) + \frac{1}{1 + \lambda} \sin(\pi
    x) \sin(\pi y) \\
  \end{bmatrix}, \quad \text{in } \Omega_0 \cup \Omega_1,
\end{displaymath}
with the discontinuous parameter $\lambda|_{\Omega_0} = \lambda_0 = 5,
\lambda|_{\Omega_1} = \lambda_1 = 1, \mu|_{\Omega_0} = \mu_0 = 2,
\mu|_{\Omega_1} = \mu_1 = 1$.
The numerical results are gathered in Tab.~\ref{tab_example1} by the
$L^2$ norm least squares finite element method and the least squares
finite element method with the discrete minus norm.
For both methods, the energy norms $\ehnorm{\cdot}$ and
$\wtehnorm{\cdot}$ are stronger than the norm $\enorm{\cdot}$. 
We calculate $\enorm{(\bsigma - \bsigma_h, \bu - \bu_h)}$ to represent
the errors under the energy norm.
From Tab.~\ref{tab_example1}, it can
be seen that the errors under the energy norm
approach zero at the optimal rates.
For the $L^2$ errors, $\| \bm{u} - \bm{u}_h
\|_{L^2(\Omega)}$ and $\| \bsigma - \bsigma_h \|_{L^2(\Omega)}$ have
optimal/suboptimal convergence speeds.  All numerically observed
convergence orders are in agreement with the theoretical analysis in
Theorem \ref{th_error} and Theorem \ref{th_wtJherror}.

\begin{table}
  \centering
  \renewcommand\arraystretch{1.15}
  \scalebox{0.630}{
  \begin{tabular}{p{0.3cm} | p{3.3cm} | p{1.3cm} | p{1.3cm} 
    | p{1.3cm} |  p{1.3cm} | p{0.9cm} }
    \hline\hline
    $m$ & $h$ & 1/5 & 1/10 & 1/20 & 1/40 & order \\
    \hline
    \multirow{3}{*}{$1$} & $\| \bm{u}-\bm{u}_h \|_{L^2(\Omega_0 \cup
    \Omega_1)}$
    & 1.86e-1 & 4.76e-2 &  1.19e-2 & 2.98e-3 & 2.00 \\
    \cline{2-7}
    & $\| \bsigma - \bsigma_h \|_{L^2(\Omega_0 \cup \Omega_1)}$
    & 3.49e-0 & 1.71e-0 &  8.58e-1 & 4.32e-1 & 0.99 \\
    \cline{2-7}
    & $\enorm{ (\bsigma - \bsigma_h, \bu - \bu_h)}$
    & 2.22e1 & 1.12e1 &  5.66e-0 & 2.85e-0 & 0.99 \\
    \hline
    \multirow{3}{*}{$2$} & $\| \bm{u}-\bm{u}_h \|_{L^2(\Omega_0 \cup
    \Omega_1)}$
    & 4.01e-3 & 4.92e-4 &  6.15e-5 & 7.69e-6 & 2.99 \\
    \cline{2-7}
    & $\| \bsigma - \bsigma_h \|_{L^2(\Omega_0 \cup \Omega_1)}$
    & 2.99e-1 & 7.48e-2 &  1.89e-2 & 4.77e-3 & 1.99 \\
    \cline{2-7}
    & $\enorm{ (\bsigma - \bsigma_h, \bu - \bu_h)}$
    & 2.03e-0 & 5.13e-1 &  1.28e-1 & 3.23e-2 & 1.99 \\
    \hline
    \multirow{3}{*}{$3$} & $\| \bm{u}-\bm{u}_h \|_{L^2(\Omega_0 \cup
    \Omega_1)}$
    & 1.65e-4 & 8.95e-6 &  5.42e-7 & 3.36e-8 & 4.01 \\
    \cline{2-7}
    & $\| \bsigma - \bsigma_h \|_{L^2(\Omega_0 \cup \Omega_1)}$
    & 1.93e-2 & 2.31e-3 &  2.88e-4 & 3.59e-5 & 3.00 \\
    \cline{2-7}
    & $\enorm{ (\bsigma - \bsigma_h, \bu - \bu_h)}$
    & 1.26e-1 & 1.60e-2 &  2.02e-3 & 2.53e-3 & 2.99 \\
    \hline\hline
  \end{tabular} \hspace{5pt}
  \begin{tabular}{p{0.3cm} | p{3.3cm} | p{1.5cm} | p{1.5cm} 
    | p{1.5cm} |  p{1.5cm} | p{0.9cm} }
    \hline\hline
    $m$ & $h$ & 1/5 & 1/10 & 1/20 & 1/40 & order \\
    \hline
    \multirow{3}{*}{$1$} & $\| \bm{u}-\bm{u}_h \|_{L^2(\Omega_0 \cup
    \Omega_1)}$
    & 1.88e-1 & 4.83e-2 &  1.21e-2 & 3.03e-3 & 2.00 \\
    \cline{2-7}
    & $\| \bsigma - \bsigma_h \|_{L^2(\Omega_0 \cup \Omega_1)}$
    & 3.50e-0 & 1.71e-0 &  8.58e-1 & 4.32e-1 & 0.99 \\
    \cline{2-7}
    & $\enorm{ (\bsigma - \bsigma_h, \bu - \bu_h)}$
    & 2.22e1 & 1.12e1 &  5.66e-0 & 2.842e-0 & 1.00 \\
    \hline
    \multirow{3}{*}{$2$} & $\| \bm{u}-\bm{u}_h \|_{L^2(\Omega_0 \cup
    \Omega_1)}$
    & 4.02e-3 & 4.93e-4 &  6.13e-5 & 7.68e-6 & 3.00 \\
    \cline{2-7}
    & $\| \bsigma - \bsigma_h \|_{L^2(\Omega_0 \cup \Omega_1)}$
    & 3.00e-1 & 7.47e-2 &  1.90e-2 & 4.78e-3 & 2.00 \\
    \cline{2-7}
    & $\enorm{ (\bsigma - \bsigma_h, \bu - \bu_h)}$
    & 2.03e-0 & 5.12e-1 &  1.29e-1 & 3.22e-2 & 2.00 \\
    \hline
    \multirow{3}{*}{$3$} & $\| \bm{u}-\bm{u}_h \|_{L^2(\Omega_0 \cup
    \Omega_1)}$
    & 1.64e-4 & 8.93e-6 &  5.41e-7 & 3.38e-8 & 4.00 \\
    \cline{2-7}
    & $\| \bsigma - \bsigma_h \|_{L^2(\Omega_0 \cup \Omega_1)}$
    & 1.92e-2 & 2.30e-3 &  2.87e-4 & 3.60e-5 & 3.01 \\
    \cline{2-7}
    & $\enorm{ (\bsigma - \bsigma_h, \bu - \bu_h)}$
    & 1.26e-1 & 1.61e-2 &  2.01e-3 & 2.51e-3 & 3.00 \\
    \hline\hline
  \end{tabular}
  }
  \caption{The numerical results for Example 1 by the $L^2$ norm least
  squares finite element method (left) / the least squares finite
  element method with the discrete minus norm (right).}
  \label{tab_example1}
\end{table}

In this example, we demonstrate the numerical performance of the
iterative method for solving the resulting linear system. 
For all tests, we use GMRES as the iterative solver and the iteration
stops when the relative error $\| A \bmr{x}^k - \bmr{b} \|_{l^2} / \|
\bmr{b} \|_{l^2} < 10^{-8}$ at the stage $k$ is smaller than the
tolerance $10^{-8}$.
For the $L^2$ norm least squares finite element method, the resulting
matrix is entirely explicit and any preconditioning technique can be
used. Here, we try to use the HSS approximation and its diagonal to
construct the preconditioners.  The
convergence steps are collected in Tab.~\ref{tab_convstepm1}.  It can
seen that the convergence is apparently accelerated by the HSS
preconditioning techniques.

\begin{table}
  \centering
  \renewcommand\arraystretch{1.05}
  \begin{tabular}{p{0.3cm}|p{7.2cm}|p{1.3cm}|p{1.3cm}|p{1.3cm}|
    p{1.3cm}}
    \hline\hline
    $m$ & $h$ & $1/5$ & $1/10$ & $1/20$ & $1/40$ \\
    \hline 
    \multirow{3}{*}{$1$} & preconditioner from HSS approximation & 8 &
    12 & 19 & 36 \\
    \cline{2-6}
    &  diagonal preconditioner & 513 & 1003 & $> 2103$ & $>$ 3000 \\
    \hline 
    \multirow{3}{*}{$2$} & preconditioner from HSS approximation & 13
    & 21 & 39 & 70 \\
    \cline{2-6}
    &  diagonal preconditioner & 2273 & $>$ 3000 & $>$ 3000 & $>$
    3000 \\
    \hline 
    \multirow{2}{*}{$3$} & preconditioner from HSS approximation & 21
    & 32 & 56 & 91 \\
    \cline{2-6}
    &  diagonal preconditioner & $>$ 3000 & $>$ 3000 & $>$ 3000 & $>$
    3000 \\
    \hline\hline
  \end{tabular}
  \caption{Convergence steps for the linear system of the $L^2$ norm
  least squares finite element method.}
  \label{tab_convstepm1}
\end{table}

For the method with the discrete minus norm, we shall solve the linear
system $A_{{\wt{\mJ}_h}} \bmr{x} = \bmr{b}$. As stated in the end of
Subsection \ref{subsec_mnorm}, 
we construct the HSS approximation $H_{L^2}$ from $A_{L^2}$ to obtain
the preconditioner, and we also directly construct the HSS
approximation $H$ from $A$ to obtain the preconditioner as a
comparison.
The convergence steps are collected in
Tab.~\ref{tab_convstepm2}.
The convergence speeds of both preconditioned GMRES methods are
significantly faster than the standard GMRES method. The convergence
histories for the iterative methods on the mesh $h=1/20$ are depicted
in Fig.~\ref{fig_convhis}. The numerical performances of both
preconditioned methods are very close, and it is more convenient to
construct the HSS approximation from $A_{L^2}$ in a standard procedure. 
In every step of Krylov iteration, we are required to solve the linear
system $B\bmr{y} = \bmr{z}$. We still use GMRES with the
preconditioner from the HSS approximation as the solver for this
system.
The average iterative steps for different meshes are collected in
Tab.~\ref{tab_convstepm2}. The iterative method also has a fast
convergence speed.  In the rest examples, we use the factors of
$H_{L^2}$ as the preconditioner to solve the linear system. To develop
a more appropriate method to solve the linear system are now left in
the future study.

\begin{table}
  \centering
  \renewcommand\arraystretch{1.15}
  \scalebox{0.95}{
  \begin{tabular}{p{0.3cm}|p{7.5cm}|p{1.3cm}|p{1.3cm}|p{1.3cm}|
    p{1.3cm}}
    \hline\hline
    $m$ & $h$ & $1/5$ & $1/10$ & $1/20$ & $1/40$ \\
    \hline 
    \multirow{4}{*}{$1$} & preconditioner from HSS approximation $H$ &
    9 & 13 & 21 & 38
    \\
    \cline{2-6}
    & preconditioner from HSS approximation
    $H_{L^2}$ & 10 & 16 & 23 & 42 \\
    \cline{2-6}
    &  identical preconditioner $I$ & 1738 & $>3000$ & $>3000$ & $>3000$ \\
    \hline
    \multirow{4}{*}{$2$} & preconditioner from HSS approximation $H$ 
    & 15 & 22 & 38 & 70
    \\
    \cline{2-6}
    & preconditioner from HSS approximation
    $H_{L^2}$ & 16 & 23 & 41 & 78 \\
    \cline{2-6}
    &  identical preconditioner $I$ & $>3000$ & $>3000$ & $>3000$ &
    $>3000$ \\
    \hline
    \multirow{4}{*}{$3$} & preconditioner from HSS approximation $H$ 
    & 23 & 33 & 58 & 97 \\
    \cline{2-6}
    & preconditioner from HSS approximation
    $H_{L^2}$ & 25 & 37 & 63 & 108 \\
    \cline{2-6}
    &  identical preconditioner $I$ & $>3000$ & $>3000$ & $>3000$ &
    $>3000$ \\
    \hline 
    $B$ & preconditioner from HSS approximation & 4 & 7 & 13 & 21  \\
    \hline\hline
  \end{tabular}}
  \caption{Convergence steps for the linear system of the least
  squares finite element method with the discrete minus norm.}
  \label{tab_convstepm2}
\end{table}

\begin{figure}[htp]
  \centering
  \includegraphics[width=0.3\textwidth]{./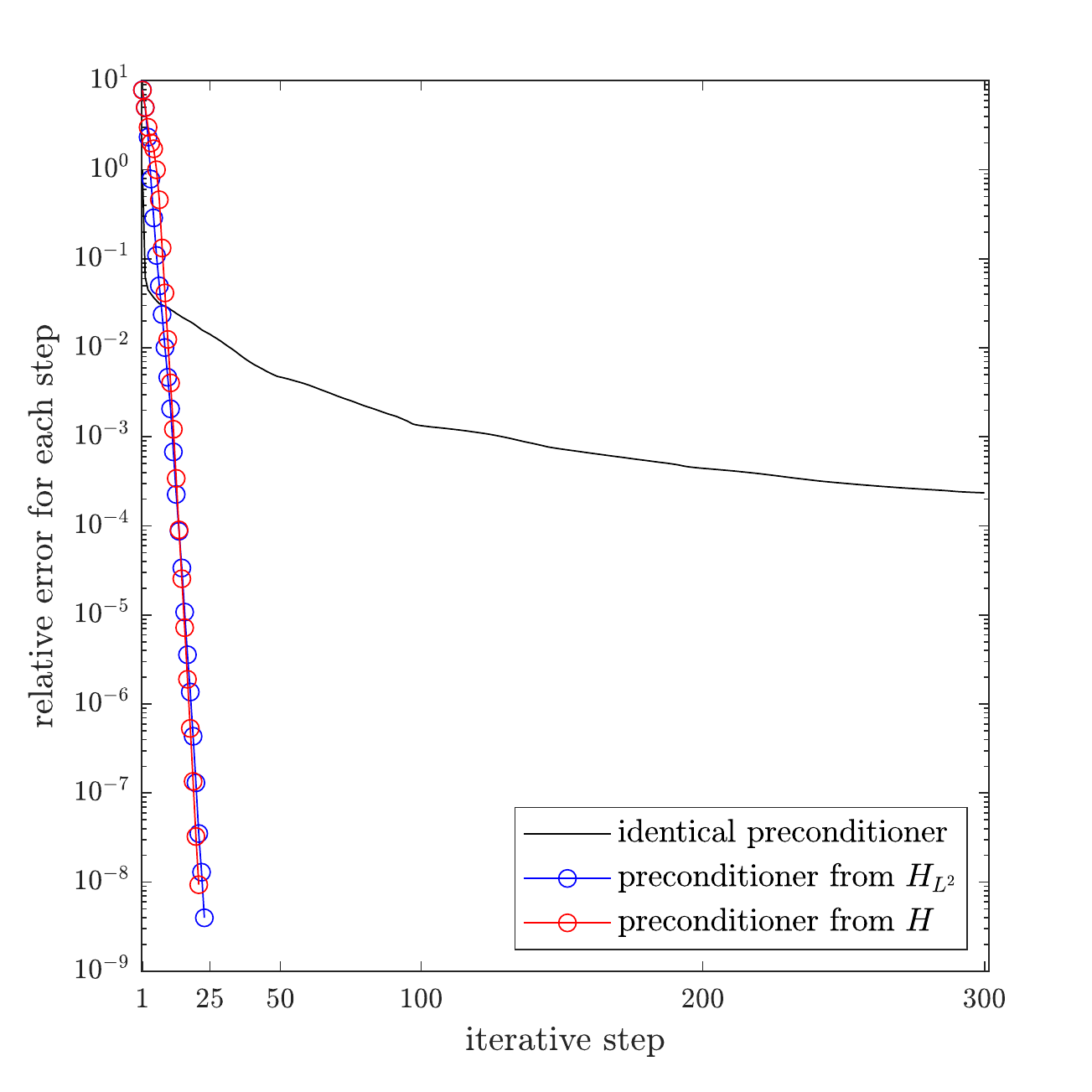}
  \hspace{10pt}
  \includegraphics[width=0.3\textwidth]{./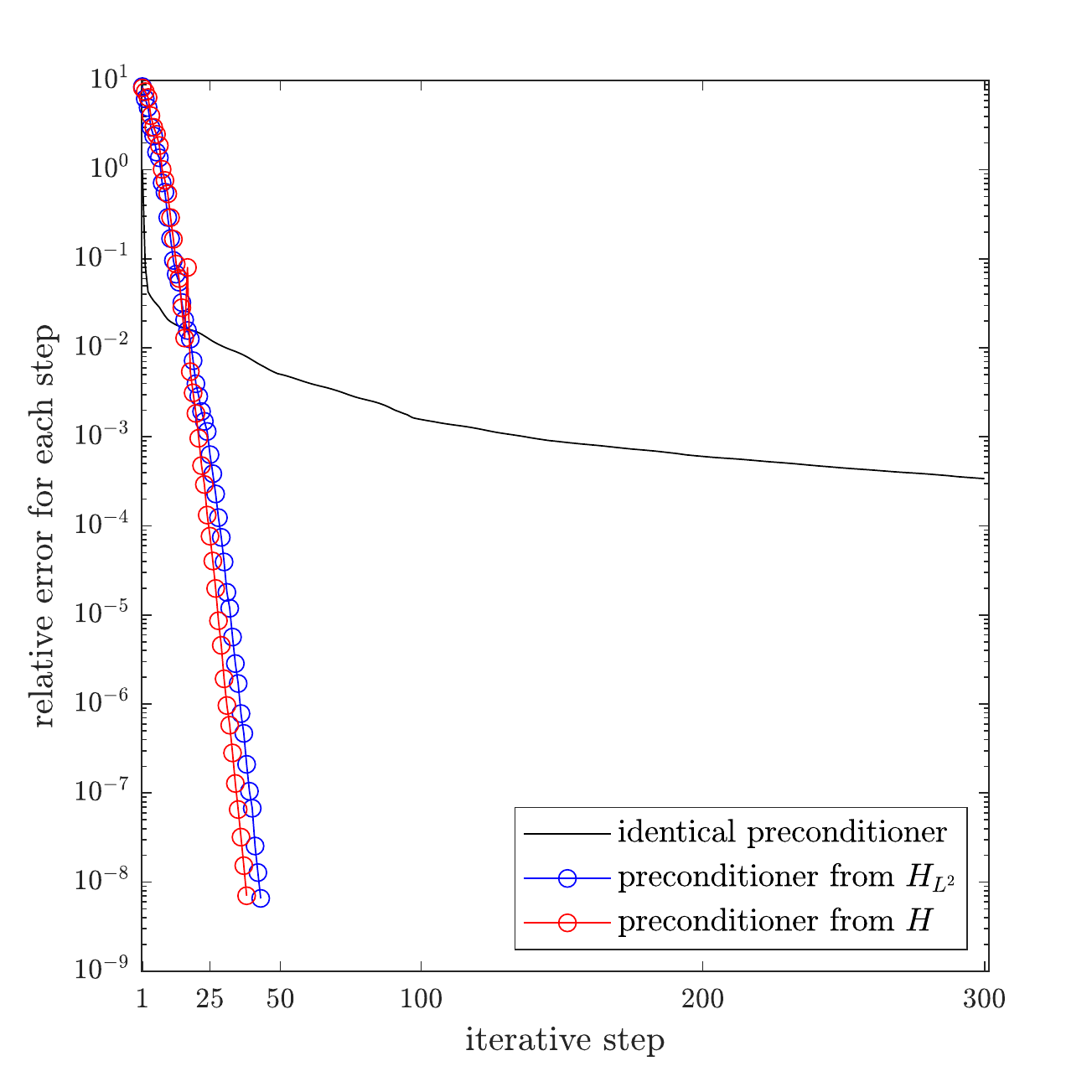}
  \hspace{10pt}
  \includegraphics[width=0.3\textwidth]{./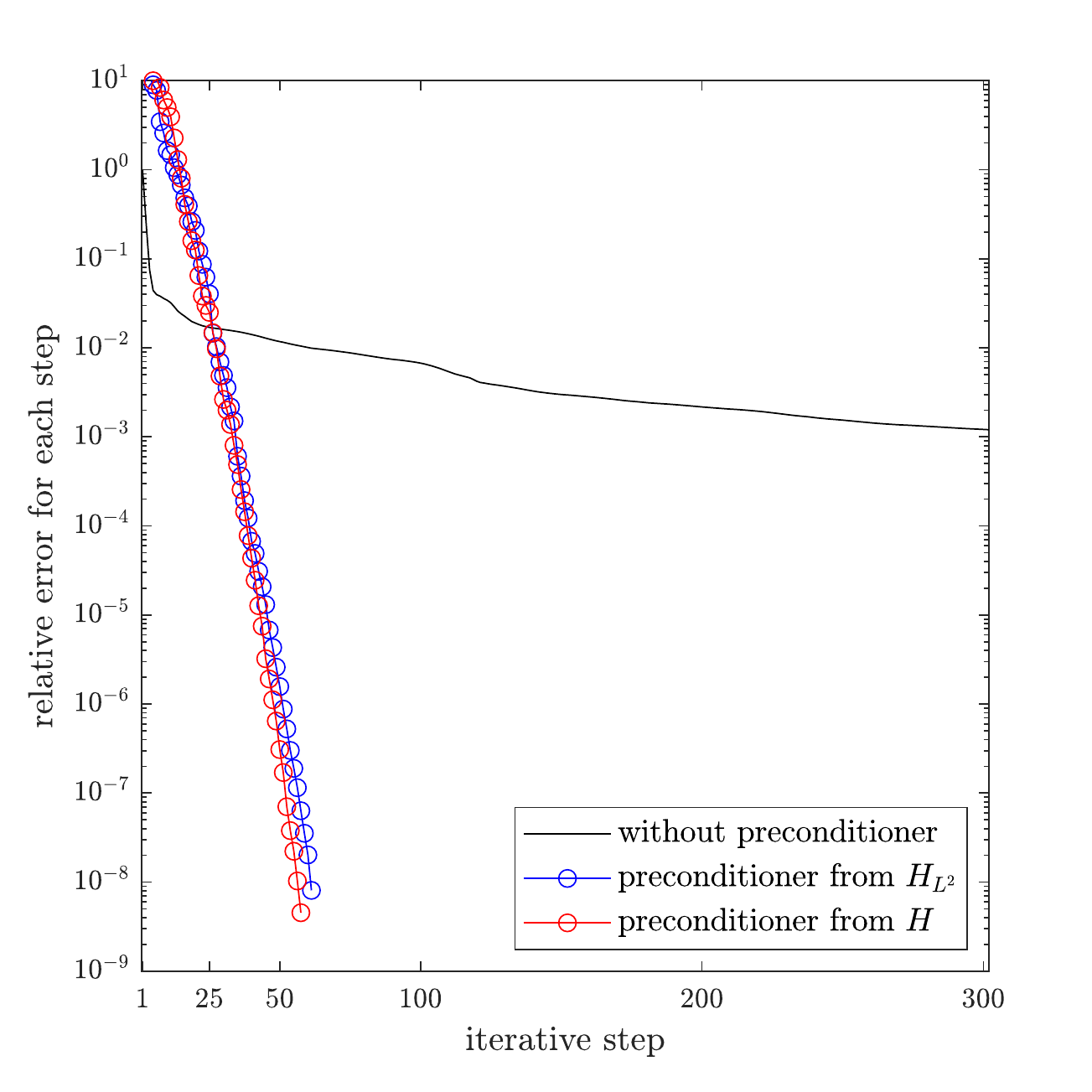}
  \caption{The convergence histories for iterative methods with the
  accuracy $m =1$ (left) / $m=2$ (mid) / $m=3$ (right).}
  \label{fig_convhis}
\end{figure}

\paragraph{\textbf{Example 2.}}
In this example, we consider the same interface problem as Example 1
but the Lam\'e parameters have a large jump across the interface. 
We solve this problem to
demonstrate the robustness of the proposed method when $\lambda
\rightarrow \infty$. 
The parameters are selected as $\lambda_0 = 10000/100, \lambda_1 = 1,
\mu_0 = \mu_1 = 1$. 
The numerical results for both methods are displayed in
Tab.~\ref{tab_example2m1} and Tab.~\ref{tab_example2m2}, respectively. 
We observe that the discrete solutions from both methods
converge uniformly as $\lambda \rightarrow \infty$. The results
illustrates the robustness when the parameter $\lambda \rightarrow
\infty$.

\begin{table}
  \centering
  \renewcommand\arraystretch{1.15}
  \scalebox{0.825}{
  \begin{tabular}{p{0.3cm} | p{3.3cm} | p{1.39cm} | p{1.39cm}  | p{1.39cm} 
    | p{1.39cm}  | p{1.39cm} |  p{1.39cm}  | p{1.39cm} | p{1.39cm} | p{0.9cm} }
    \hline\hline
    $m$ & $h$ & \multicolumn{2}{c|}{1/10} & \multicolumn{2}{c|}{1/10} &
    \multicolumn{2}{c|}{1/10} & \multicolumn{2}{c|}{1/10} & order \\
    \hline
    & $\lambda$  & $100$ & $10000$ & $100$ & $10000$& $100$ & $10000$&
    $100$ & $10000$ & \\
    \hline
    \multirow{3}{*}{1}  &  $\| \bm{u}-\bm{u}_h \|_{L^2(\Omega_0 \cup
    \Omega_1)}$  & 
    6.686e-1 & 6.690e-1 & 1.840e-1 & 1.842e-1 & 4.709e-2 & 4.713e-2 &
    1.180e-2 & 1.181e-2 & 2.00 \\
    \cline{2-11}
    &$\| \bsigma - \bsigma_h \|_{L^2(\Omega_0
    \cup \Omega_1)}$ & 
    7.877e-0 & 7.883e-0 & 3.565e-0 & 3.569e-0 & 1.751e-0 & 1.753e-0 &
    8.800e-1 & 8.814e-1 & 1.00 \\
    \cline{2-11}
    & $\ehnorm{ (\bsigma - \bsigma_h, \bu - \bu_h)}$ & 
    4.281e1 & 4.282e1 & 2.220e1 & 2.220e1 & 1.127e1 & 1.126e1 &
    5.665e-0 & 5.665e-0 & 0.99 \\
    \hline
    \multirow{3}{*}{2}  &  $\| \bm{u}-\bm{u}_h \|_{L^2(\Omega_0 \cup
    \Omega_1)}$  & 
    3.538e-2 & 3.538e-1 & 4.016e-3 & 4.016e-3 & 4.924e-4 & 4.925e-4 &
    6.149e-5 & 6.150e-5 & 3.00 \\
    \cline{2-11}
    &$\| \bsigma - \bsigma_h \|_{L^2(\Omega_0
    \cup \Omega_1)}$ & 
    1.451e-0 & 1.453e-0 & 3.055e-1 & 3.058e-1 & 7.626e-2 & 7.636e-2 &
    1.932e-2 & 1.933e-2 & 2.00 \\
    \cline{2-11}
    & $\ehnorm{ (\bsigma - \bsigma_h, \bu - \bu_h)}$ & 
    7.913e-0 & 7.913e-0 & 2.031e-0 & 2.030e-0 & 5.135e-1 & 5.135e-1 &
    1.289e-1 & 1.289e-1 & 2.00 \\
    \hline
    \multirow{3}{*}{3}  &  $\| \bm{u}-\bm{u}_h \|_{L^2(\Omega_0 \cup
    \Omega_1)}$  & 
    3.749e-3 & 3.750e-3 & 1.651e-4 & 1.651e-4 & 8.925e-6 & 8.924e-6 &
    5.410e-7 & 5.411e-7 & 4.03 \\
    \cline{2-11}
    &$\| \bsigma - \bsigma_h \|_{L^2(\Omega_0
    \cup \Omega_1)}$ & 
    2.169e-1 & 2.172e-1 & 2.222e-2 & 2.223e-2 & 2.419e-3 & 2.422e-3 &
    2.940e-4 & 2.943e-4 & 3.02 \\
    \cline{2-11}
    & $\ehnorm{ (\bsigma - \bsigma_h, \bu - \bu_h)}$ & 
    9.961e-1 & 9.961e-1 & 1.263e-1 & 1.263e-1 & 1.602e-2 & 1.601e-2 &
    2.017e-3 & 2.017e-3 & 2.98 \\
    \hline\hline
  \end{tabular}}
  \caption{Numerical results for Example 2 by the $L^2$ norm least
  squares finite element method with $\lambda = 100, 10000$.}
  \label{tab_example2m1}
\end{table}

\begin{table}
  \centering
  \renewcommand\arraystretch{1.15}
  \scalebox{0.825}{
  \begin{tabular}{p{0.3cm} | p{3.3cm} | p{1.39cm} | p{1.39cm}  | p{1.39cm} 
    | p{1.39cm}  | p{1.39cm} |  p{1.39cm}  | p{1.39cm} | p{1.39cm} | p{0.9cm} }
    \hline\hline
    $m$ & $h$ & \multicolumn{2}{c|}{1/10} & \multicolumn{2}{c|}{1/10} &
    \multicolumn{2}{c|}{1/10} & \multicolumn{2}{c|}{1/10} & order \\
    \hline
    & $\lambda$  & $100$ & $10000$ & $100$ & $10000$& $100$ & $10000$&
    $100$ & $10000$ & \\
    \hline
    \multirow{3}{*}{1}  &  $\| \bm{u}-\bm{u}_h \|_{L^2(\Omega_0 \cup
    \Omega_1)}$  & 
    6.711e-1 & 6.708e-1 & 1.859e-1 & 1.858e-1 & 4.773e-2 & 4.771e-2 &
    1.199e-2 & 1.198e-2 & 1.99 \\
    \cline{2-11}
    &$\| \bsigma - \bsigma_h \|_{L^2(\Omega_0
    \cup \Omega_1)}$ & 
    7.892e-0 & 7.898e-0 & 3.572e-0 & 3.576e-0 & 1.753e-0 & 1.756e-0 &
    8.806e-1 & 8.882e-1 & 0.99 \\
    \cline{2-11}
    & $\ehnorm{ (\bsigma - \bsigma_h, \bu - \bu_h)}$ & 
    4.282e1 & 4.282e1 & 2.220e1 & 2.220e1 & 1.127e1 & 1.127e1 &
    5.666e-0 & 5.665e-0 & 0.99 \\
    \hline
    \multirow{3}{*}{2}  &  $\| \bm{u}-\bm{u}_h \|_{L^2(\Omega_0 \cup
    \Omega_1)}$  & 
    3.547e-2 & 3.548e-1 & 4.017e-3 & 4.017e-3 & 4.925e-4 & 4.926e-4 &
    6.150e-5 & 6.151e-5 & 3.00 \\
    \cline{2-11}
    &$\| \bsigma - \bsigma_h \|_{L^2(\Omega_0
    \cup \Omega_1)}$ & 
    1.449e-0 & 1.451e-0 & 3.053e-1 & 3.056e-1 & 7.627e-2 & 7.637e-2 &
    1.932e-2 & 1.934e-2 & 1.98 \\
    \cline{2-11}
    & $\ehnorm{ (\bsigma - \bsigma_h, \bu - \bu_h)}$ & 
    7.913e-0 & 7.913e-0 & 2.031e-0 & 2.030e-0 & 5.133e-1 & 5.133e-1 &
    1.289e-1 & 1.289e-1 & 1.99 \\
    \hline
    \multirow{3}{*}{3}  &  $\| \bm{u}-\bm{u}_h \|_{L^2(\Omega_0 \cup
    \Omega_1)}$  & 
    3.743e-3 & 3.743e-3 & 1.649e-4 & 1.648e-4 & 8.920e-6 & 8.920e-6 &
    5.409e-7 & 5.408e-7 & 4.05 \\
    \cline{2-11}
    &$\| \bsigma - \bsigma_h \|_{L^2(\Omega_0
    \cup \Omega_1)}$ & 
    2.153e-1 & 2.155e-1 & 2.217e-2 & 2.220e-2 & 2.418e-3 & 2.420e-3 &
    2.939e-4 & 2.942e-4 & 3.03 \\
    \cline{2-11}
    & $\ehnorm{ (\bsigma - \bsigma_h, \bu - \bu_h)}$ & 
    9.967e-1 & 9.968e-1 & 1.263e-1 & 1.264e-1 & 1.602e-2 & 1.602e-2 &
    2.017e-3 & 2.018e-3 & 2.99 \\
    \hline\hline
  \end{tabular}}
  \caption{Numerical results for Example 2 by the least squares finite
  element method of the discrete minus norm with $\lambda = 100,
  10000$.}
  \label{tab_example2m2}
\end{table}

\paragraph{\textbf{Example 3.}}
In this example, we consider a elasticity interface problem with a
star-shaped interface consisting of both concave and convex curve
segments, see Fig.~\ref{fig_2ddomaininterface}. The interface $\Gamma$
is governed by the polar angle $\theta$ that 
\begin{displaymath}
  r = \frac{1}{2} + \frac{\sin 5\theta}{7}.
\end{displaymath}
The analytic solution $\bm{u}$ is taken in a piecewise manner as
\begin{displaymath}
  \bm{u}(x, y) = \begin{cases}
    \bm{u}^0(x, y) & \text{in } \Omega_0, \\
    \bm{u}^1(x, y), & \text{in } \Omega_1, \\
  \end{cases}
\end{displaymath}
where
\begin{displaymath}
  \bu^0(x, y) = [
    \cos(\pi x) \cos(\pi y),\cos(\pi y)]^T, 
    \quad \bu^1(x, y) =   [   \sin(\pi x) \sin(\pi y),  x(1 - x)\sin(\pi
    y)]^T.
\end{displaymath}
The parameters are chosen as $\lambda_0 = \lambda_1 = \mu_0 = \mu_1 =
1$.  We present the numerical results in Tab.~\ref{tab_example3}. The
predicted convergence rates under the energy norms and the $L^2$ norms
for both unknowns are verified from the convergence histories.

\begin{table}
  \centering
  \renewcommand\arraystretch{1.05}
  \scalebox{0.630}{
  \begin{tabular}{p{0.3cm} | p{3.3cm} | p{1.5cm} | p{1.5cm} 
    | p{1.5cm} |  p{1.5cm} | p{0.9cm} }
    \hline\hline
    $m$ & $h$ & 1/5 & 1/10 & 1/20 & 1/40 & order \\
    \hline
    \multirow{3}{*}{$1$} & $\| \bm{u}-\bm{u}_h \|_{L^2(\Omega_0 \cup
    \Omega_1)}$
    & 9.131e-1 & 2.431e-2 &  6.244e-3 & 1.575e-3 & 1.98 \\
    \cline{2-7}
    & $\| \bsigma - \bsigma_h \|_{L^2(\Omega_0 \cup \Omega_1)}$
    & 1.127e-0 & 5.245e-1 &  2.621e-1 & 1.326e-1 & 0.98 \\
    \cline{2-7}
    & $\enorm{ (\bsigma - \bsigma_h, \bu - \bu_h)}$
    & 9.270e-0 & 4.722e-0 &  2.377e-0 & 1.193e-0 & 1.00 \\
    \hline
    \multirow{3}{*}{$2$} & $\| \bm{u}-\bm{u}_h \|_{L^2(\Omega_0 \cup
    \Omega_1)}$
    & 3.047e-3 & 3.513e-4 &  4.412e-5 & 5.551e-6 & 2.99 \\
    \cline{2-7}
    & $\| \bsigma - \bsigma_h \|_{L^2(\Omega_0 \cup \Omega_1)}$
    & 1.193e-1 & 2.432e-2 &  5.946e-3 & 1.503e-6 & 1.98 \\
    \cline{2-7}
    & $\enorm{ (\bsigma - \bsigma_h, \bu - \bu_h)}$
    & 8.171e-1 & 2.079e-1 &  5.239e-2 & 1.315e-2 & 2.00 \\
    \hline
    \multirow{3}{*}{$3$} & $\| \bm{u}-\bm{u}_h \|_{L^2(\Omega_0 \cup
    \Omega_1)}$
    & 1.595e-4 & 6.891e-6 &  3.799e-7 & 2.315e-8 & 4.03 \\
    \cline{2-7}
    & $\| \bsigma - \bsigma_h \|_{L^2(\Omega_0 \cup \Omega_1)}$
    & 8.609e-3 & 8.586e-4 &  9.233e-5 & 1.135e-5 & 3.02 \\
    \cline{2-7}
    & $\enorm{ (\bsigma - \bsigma_h, \bu - \bu_h)}$
    & 5.527e-2 & 6.517e-3 &  8.173e-4 & 1.027e-4 & 2.99 \\
    \hline\hline
  \end{tabular} \hspace{5pt} \begin{tabular}{p{0.3cm} | p{3.3cm} | p{1.5cm} | p{1.5cm} 
    | p{1.5cm} |  p{1.5cm} | p{0.9cm} }
    \hline\hline
    $m$ & $h$ & 1/5 & 1/10 & 1/20 & 1/40 & order \\
    \hline
    \multirow{3}{*}{$1$} & $\| \bm{u}-\bm{u}_h \|_{L^2(\Omega_0 \cup
    \Omega_1)}$
    & 9.127e-2 & 2.430e-2 &  6.237e-3 & 1.573e-3 & 1.99 \\
    \cline{2-7}
    & $\| \bsigma - \bsigma_h \|_{L^2(\Omega_0 \cup \Omega_1)}$
    & 1.127e-0 & 5.267e-1 &  2.627e-1 & 1.327e-1 & 0.99 \\
    \cline{2-7}
    & $\enorm{ (\bsigma - \bsigma_h, \bu - \bu_h)}$
    & 9.270e-0 & 4.722e-0 &  2.377e-0 & 1.193e-0 & 1.00 \\
    \hline
    \multirow{3}{*}{$2$} & $\| \bm{u}-\bm{u}_h \|_{L^2(\Omega_0 \cup
    \Omega_1)}$
    & 3.441e-3 & 3.513e-3 &  4.412e-5 & 5.551e-6 & 2.99 \\
    \cline{2-7}
    & $\| \bsigma - \bsigma_h \|_{L^2(\Omega_0 \cup \Omega_1)}$
    & 1.182e-1 & 2.430e-2 &  5.953e-3 & 1.503e-3 & 1.98 \\
    \cline{2-7}
    & $\enorm{ (\bsigma - \bsigma_h, \bu - \bu_h)}$
    & 8.170e-1 & 2.079e-1 &  5.239e-2 & 1.315e-2 & 2.00 \\
    \hline
    \multirow{3}{*}{$3$} & $\| \bm{u}-\bm{u}_h \|_{L^2(\Omega_0 \cup
    \Omega_1)}$
    & 1.574e-4 & 6.863e-6 &  3.797e-7 & 2.315e-8 & 4.03 \\
    \cline{2-7}
    & $\| \bsigma - \bsigma_h \|_{L^2(\Omega_0 \cup \Omega_1)}$
    & 7.772e-3 & 8.269e-4 &  1.003e-4 & 1.250e-5 & 3.00 \\
    \cline{2-7}
    & $\enorm{ (\bsigma - \bsigma_h, \bu - \bu_h)}$
    & 5.553e-2 & 6.517e-3 &  8.173e-4 & 1.026e-4 & 2.99 \\
    \hline\hline
  \end{tabular}
  }
  \caption{Numerical results for Example 3 by the $L^2$ norm least
  squares finite element method (left) / the least squares finite
  element method with the discrete minus norm (right).}
  \label{tab_example3}
\end{table}

\paragraph{\textbf{Example 4.}}
In this test, we consider the interface problem with an L-shaped
polygonal interface $\Gamma$, which is described by the following
vertices, see Fig.~\ref{fig_2ddomaininterface}, 
\begin{displaymath}
  (0, 0), \quad (-0.35, 0.35), \quad (0, 0.7), \quad (0.7, 0), \quad
  (0, -0.7), \quad (-0.35, 0.35). 
\end{displaymath}
The exact solution and the parameters are taken the same as Example 1.
The numerical errors are displayed in Tab.~\ref{tab_example4}. For the
case of the polygonal interface, all numerically detected convergence
speeds still agree with the theoretical results. 

\begin{table}
  \centering
  \renewcommand\arraystretch{1.05}
  \scalebox{0.630}{
  \begin{tabular}{p{0.3cm} | p{3.3cm} | p{1.5cm} | p{1.5cm} 
    | p{1.5cm} |  p{1.5cm} | p{0.9cm} }
    \hline\hline
    $m$ & $h$ & 1/5 & 1/10 & 1/20 & 1/40 & order \\
    \hline
    \multirow{3}{*}{$1$} & $\| \bm{u}-\bm{u}_h \|_{L^2(\Omega_0 \cup
    \Omega_1)}$
    & 8.082e-1 & 2.383e-1 &  6.356e-2 & 1.609e-2 & 1.98 \\
    \cline{2-7}
    & $\| \bsigma - \bsigma_h \|_{L^2(\Omega_0 \cup \Omega_1)}$
    & 8.601e-0 & 3.583e-0 &  1.686e-0 & 8.412e-1 & 1.00 \\
    \cline{2-7}
    & $\enorm{ (\bsigma - \bsigma_h, \bu - \bu_h)}$
    & 3.222e1 & 2.065e1 &  1.125e1 & 5.810e-0 & 0.96 \\
    \hline
    \multirow{3}{*}{$2$} & $\| \bm{u}-\bm{u}_h \|_{L^2(\Omega_0 \cup
    \Omega_1)}$
    & 8.849e-2 & 1.015e-2 &  1.205e-3 & 1.476e-4 & 3.03 \\
    \cline{2-7}
    & $\| \bsigma - \bsigma_h \|_{L^2(\Omega_0 \cup \Omega_1)}$
    & 2.157e-0 & 4.519e-1 &  1.147e-1 & 2.914e-2 & 1.98 \\
    \cline{2-7}
    & $\enorm{ (\bsigma - \bsigma_h, \bu - \bu_h)}$
    & 8.596e-0 & 2.652e-0 &  7.256e-1 & 1.875e-1 & 1.96 \\
    \hline
    \multirow{3}{*}{$3$} & $\| \bm{u}-\bm{u}_h \|_{L^2(\Omega_0 \cup
    \Omega_1)}$
    & 1.453e-2 & 5.489e-4 &  3.156e-5 & 1.907e-6 & 4.05 \\
    \cline{2-7}
    & $\| \bsigma - \bsigma_h \|_{L^2(\Omega_0 \cup \Omega_1)}$
    & 5.333e-1 & 4.399e-2 &  5.080e-3 & 6.361e-4 & 3.00 \\
    \cline{2-7}
    & $\enorm{ (\bsigma - \bsigma_h, \bu - \bu_h)}$
    & 1.499e-0 & 2.609e-1 &  3.626e-2 & 4.699e-3 & 2.95 \\
    \hline\hline
  \end{tabular} \hspace{5pt} \begin{tabular}{p{0.3cm} | p{3.3cm} | p{1.5cm} | p{1.5cm} 
    | p{1.5cm} |  p{1.5cm} | p{0.9cm} }
    \hline\hline
    $m$ & $h$ & 1/5 & 1/10 & 1/20 & 1/40 & order \\
    \hline
    \multirow{3}{*}{$1$} & $\| \bm{u}-\bm{u}_h \|_{L^2(\Omega_0 \cup
    \Omega_1)}$
    & 8.093e-1 & 2.416e-1 &  6.583e-2 & 1.651e-2 & 1.99 \\
    \cline{2-7}
    & $\| \bsigma - \bsigma_h \|_{L^2(\Omega_0 \cup \Omega_1)}$
    & 8.471e-0 & 3.593e-0 &  1.699e-0 & 8.456e-1 & 1.00 \\
    \cline{2-7}
    & $\enorm{ (\bsigma - \bsigma_h, \bu - \bu_h)}$
    & 3.221e1 & 2.065e1 &  1.125e1 & 5.811e-0 & 0.95 \\
    \hline
    \multirow{3}{*}{$2$} & $\| \bm{u}-\bm{u}_h \|_{L^2(\Omega_0 \cup
    \Omega_1)}$
    & 8.899e-2 & 1.013e-2 &  1.208e-3 & 1.477e-4 & 3.03 \\
    \cline{2-7}
    & $\| \bsigma - \bsigma_h \|_{L^2(\Omega_0 \cup \Omega_1)}$
    & 2.127e-0 & 4.525e-1 &  1.153e-1 & 2.922e-2 & 1.98 \\
    \cline{2-7}
    & $\enorm{ (\bsigma - \bsigma_h, \bu - \bu_h)}$
    & 8.593e-0 & 2.658e-0 &  7.255e-1 & 1.875e-1 & 1.95 \\
    \hline
    \multirow{3}{*}{$3$} & $\| \bm{u}-\bm{u}_h \|_{L^2(\Omega_0 \cup
    \Omega_1)}$
    & 1.453e-2 & 5.486e-4 &  3.157e-5 & 1.908e-6 & 4.03 \\
    \cline{2-7}
    & $\| \bsigma - \bsigma_h \|_{L^2(\Omega_0 \cup \Omega_1)}$
    & 5.272e-1 & 4.375e-2 &  5.087e-3 & 6.416e-4 & 2.99 \\
    \cline{2-7}
    & $\enorm{ (\bsigma - \bsigma_h, \bu - \bu_h)}$
    & 1.499e-0 & 2.609e-1 &  3.625e-2 & 4.699e-3 & 2.95 \\
    \hline\hline
  \end{tabular}
  }
  \caption{Numerical results for Example 3 by the $L^2$ norm least
  squares finite element method (left) / the least squares finite
  element method with the discrete minus norm (right).}
  \label{tab_example4}
\end{table}

\paragraph{\textbf{Example 5.}} 
In this test, we still consider the L-shaped interface and we 
investigate the performance of the method dealing with the problem of
a singular solution. 
The exact solution is selected to be 
\begin{displaymath}
  \bm{u}(x, y) = \begin{cases}
    \bm{u}^0(x, y), & \text{in } \Omega_0, \\
    [1, 1]^T , & \text{in } \Omega_1, \\
  \end{cases}
\end{displaymath}
with the parameters $\lambda_0 = \lambda_1 = \mu_0 = \mu_1 = 1$,
where $\bm{u}^0$ is given as 
\begin{displaymath}
  \begin{aligned}
    u_r^0(r, \theta) &= \frac{r^\alpha}{2\mu} \left( -(\alpha + 1) \cos(
    (\alpha + 1) \theta) + (C_2 - (\alpha + 1)) C_1 \cos( (\alpha - 1)
    \theta) \right),  \\
    u_\theta^0(r, \theta) &= \frac{r^\alpha}{2 \mu} \left( (\alpha + 1)
    \sin( (\alpha + 1) \theta) + (C_2 + \alpha - 1) C_1 \sin( (\alpha
    - 1) \theta)
    \right),
  \end{aligned}
\end{displaymath}
in the polar coordinates $(r, \theta)$. We let 
$\alpha \approx
0.5444837$ be the solution of the following function 
\begin{displaymath}
  \alpha \sin(2 w) + \sin(2 w \alpha) =0,
\end{displaymath}
with $w = 3\pi /4$, and the constants $C_1$ and $C_2$ are
\begin{displaymath}
  C_1 = - \frac{\cos( (\alpha + 1) w)}{\cos( (\alpha - 1) w)}, \quad
  C_2 = \frac{2(\lambda + 2\mu)}{\lambda + \mu}.
\end{displaymath}
The source term $\bm{f} = 0$ on the domain $\Omega_0 \cup \Omega_1$.
The exact solution has the regularity that $(\bsigma,
\bu) \in \bH^{\alpha - \varepsilon}(\div; \Omega_0 \cup \Omega_1)
\times \bH^{1+\alpha - \varepsilon}(\Omega_0 \cup \Omega_1)$ for
$\forall \varepsilon > 0$.Hence, we solve this problem by the least
squares finite element with the discrete minus norm. 
We consider the linear accuracy $m = 1$ for this test, and
the results are shown in Tab.~\ref{tab_example5}. 
It can be observed that the convergence rate for the energy norm is 
$O(h^{0.52})$, which is consistent with the regularity of the exact
solution and the theoretical analysis.
The $L^2$ errors for both variables are less than the optimal
convergence rates, i.e. $O(h^{1.3})$ and $O(h^{0.51})$ for $\bu$ and
$\bsigma$, respectively. The reason may be traced back to the
singularity of the exact solution. 

\begin{table}
  \centering
  \renewcommand\arraystretch{1.10}
  \scalebox{0.825}{
  \begin{tabular}{p{0.3cm} | p{3.3cm} | p{1.5cm} | p{1.5cm} 
    | p{1.5cm} |  p{1.5cm} | p{1.5cm} |  p{1.5cm} |p{0.9cm} }
    \hline\hline
    $m$ & $h$ & 1/5 & 1/10 & 1/20 & 1/40 & 1/80 & 1/160 & order \\
    \hline
    \multirow{3}{*}{$1$} & $\| \bm{u}-\bm{u}_h \|_{L^2(\Omega_0 \cup
    \Omega_1)}$
    & 4.712e-2 & 2.100e-2 &  9.502e-3 & 3.369e-3 & 1.308e-3 & 5.158e-3
    & 1.33 \\
    \cline{2-9}
    & $\| \bsigma - \bsigma_h \|_{L^2(\Omega_0 \cup \Omega_1)}$
    & 7.779e-1 & 6.119e-1 &  4.950e-1 & 3.612e-1 & 2.506e-1 & 1.761e-1
    & 0.50 \\
    \cline{2-9}
    & $\enorm{ (\bsigma - \bsigma_h, \bu - \bu_h)}$
    & 1.089e-0 & 7.980e-1 &  6.278e-1 & 4.428e-1 & 3.133e-1 & 2.170e-1
    & 0.52 \\
    \hline\hline
  \end{tabular}}
  \caption{Numerical errors for Example 5 by the least squares finite
  element method with discrete minus norm.}
  \label{tab_example5}
\end{table}

\paragraph{\textbf{Example 6.}}
In this test, we consider the interface problem in the cubic domain
$\Omega = (0, 1)^3$. 
We take $\Gamma$ as a spherical interface with the radius $r = 0.35$
centered at the point $(0.5, 0.5, 0.5)$. Let the exact displacement  
$\bm{u}$ be
\begin{displaymath}
  \bm{u}(x, y, z) = \begin{bmatrix}
    2^4 \\ 2^5 \\ 2^6 \\
  \end{bmatrix} x(1 - x) y (1 - y) z (1 - z), \quad \text{in }
  \Omega_0 \cup \Omega_1.
\end{displaymath}
Th parameters $\lambda$, $\mu$ are discontinuous across the interface,
$\lambda_0 = 10, \lambda_1 = 1, \mu_0 = 10, \mu_1 = 1$.  We adopt a
series of tetrahedral meshes with the mesh $h = 1/4, 1/8, 1/16, 1/32$
to solve this problem, see Fig.~\ref{fig_3dinterface}.  The numerical
errors under all error measurements are reported in
Tab.~\ref{tab_example6} for both methods. The numerical results
illustrate the accuracy of the methods in three dimensions. 

\begin{figure}[htb]
  \centering
  \begin{tikzpicture}[scale=2.6]
    \draw[thick] (-0.9, 0.2) -- (0, 0) -- (0.6, 0.35);
    \draw[thick] (0, 0) -- (0, 1);
    \draw[thick] (0.6, 1.35) -- (0, 1) -- (-0.9, 1.2);
    \draw[thick] (0.6, 1.35) -- (0.6, 0.35);
    \draw[thick] (-0.9, 0.2) -- (-0.9, 1.2);
    \draw[thick] (-0.9, 1.2) -- (-0.26, 1.55) -- (0.6, 1.35);
    \draw[thick, dashed] (-0.9, 0.2) -- (-0.26, 0.55) -- (0.6, 0.35);
    \draw[thick, dashed] (-0.26, 0.55) -- (-0.26, 1.55);
    \draw[thick] (-0.2, 0.7) circle [radius=0.3];
    \draw[thick, dashed] (-0.5, 0.7) [out = -30, in = 210] to (0.1,
    0.7);
    \draw[thick, dashed] (-0.5, 0.7) [out = 20, in = 160] to (0.1,
    0.7);
    \node at (-0.16, 0.68) {$\Omega_0$};
    \node at (0.3, 0.89) {$\Omega_1$};
  \end{tikzpicture}
  \hspace{100pt}
  \includegraphics[width=0.25\textwidth, height=0.25\textwidth]{./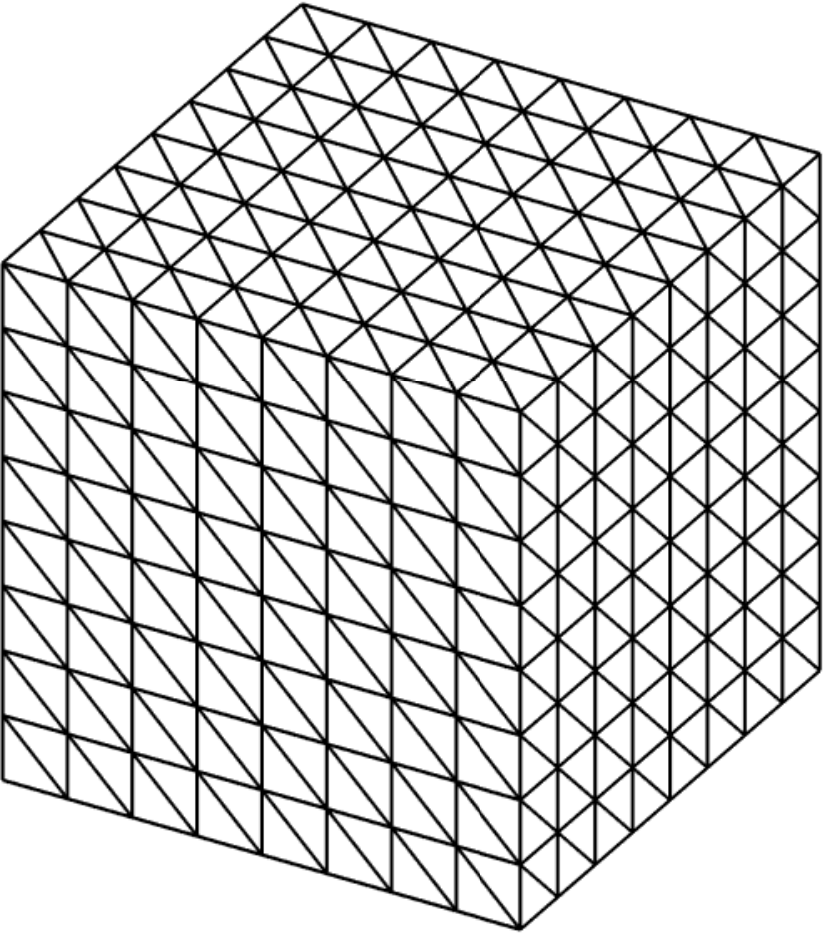}
  \caption{The spherical domain and the tetrahedral mesh of Example 3.}
  \label{fig_3dinterface}
\end{figure}

\begin{table}
  \centering
  \renewcommand\arraystretch{1.10}
  \scalebox{0.625}{
  \begin{tabular}{p{0.3cm} | p{3.3cm} | p{1.5cm} | p{1.5cm} 
    | p{1.5cm} |  p{1.5cm} | p{0.9cm} }
    \hline\hline
    $m$ & $h$ & 1/5 & 1/10 & 1/20 & 1/40 & order \\
    \hline
    \multirow{3}{*}{$1$} & $\| \bm{u}-\bm{u}_h \|_{L^2(\Omega_0 \cup
    \Omega_1)}$
    & 1.483e-1 & 4.786e-3 &  1.307e-2 & 3.335e-4 & 1.98 \\
    \cline{2-7}
    & $\| \bsigma - \bsigma_h \|_{L^2(\Omega_0 \cup \Omega_1)}$
    & 1.556e-1 & 7.528e-1 &  3.921e-1 & 1.948e-1 & 1.01 \\
    \cline{2-7}
    & $\enorm{ (\bsigma - \bsigma_h, \bu - \bu_h)}$
    & 5.089e-0 & 2.636e-0 &  1.353e-0 & 6.859e-1 & 0.98 \\
    \hline
    \multirow{3}{*}{$2$} & $\| \bm{u}-\bm{u}_h \|_{L^2(\Omega_0 \cup
    \Omega_1)}$
    & 1.147e-2 & 9.065e-4 &  1.023e-4 & 1.233e-5 & 3.03 \\
    \cline{2-7}
    & $\| \bsigma - \bsigma_h \|_{L^2(\Omega_0 \cup \Omega_1)}$
    & 2.199e-1 & 5.603e-2 &  1.443e-2 & 3.685e-3 & 1.97 \\
    \cline{2-7}
    & $\enorm{ (\bsigma - \bsigma_h, \bu - \bu_h)}$
    & 7.433e-1 & 1.902e-1 &  4.888e-2 & 1.239e-2 & 1.98 \\
    \hline\hline
  \end{tabular} \hspace{5pt}
  \begin{tabular}{p{0.3cm} | p{3.3cm} | p{1.5cm} | p{1.5cm} 
    | p{1.5cm} |  p{1.5cm} | p{0.9cm} }
    \hline\hline
    $m$ & $h$ & 1/5 & 1/10 & 1/20 & 1/40 & order \\
    \hline
    \multirow{3}{*}{$1$} & $\| \bm{u}-\bm{u}_h \|_{L^2(\Omega_0 \cup
    \Omega_1)}$
    & 1.149e-1 & 4.639e-2 &  1.291e-2 & 3.334e-3 & 1.95 \\
    \cline{2-7}
    & $\| \bsigma - \bsigma_h \|_{L^2(\Omega_0 \cup \Omega_1)}$
    & 1.563e-0 & 7.551e-1 &  3.925e-1 & 1.960e-1 & 0.99 \\
    \cline{2-7}
    & $\enorm{ (\bsigma - \bsigma_h, \bu - \bu_h)}$
    & 5.089e-0 & 2.637e-0 &  1.353e-0 & 6.857e-1 & 0.98 \\
    \hline
    \multirow{3}{*}{$2$} & $\| \bm{u}-\bm{u}_h \|_{L^2(\Omega_0 \cup
    \Omega_1)}$
    & 1.153e-2 & 9.058e-4 &  1.022e-4 & 1.239e-5 & 3.04 \\
    \cline{2-7}
    & $\| \bsigma - \bsigma_h \|_{L^2(\Omega_0 \cup \Omega_1)}$
    & 2.197e-1 & 5.693e-2 &  1.449e-2 & 3.691e-3 & 1.98 \\
    \cline{2-7}
    & $\enorm{ (\bsigma - \bsigma_h, \bu - \bu_h)}$
    & 7.433e-1 & 1.901e-1 &  4.887e-2 & 1.238e-2 & 1.98 \\
    \hline\hline
  \end{tabular}
  }
  \caption{Numerical results for Example 6 by the $L^2$ norm least
  squares finite element method (left) / the least squares finite
  element method with the discrete minus norm (right).}
  \label{tab_example6}
\end{table}

\section*{Acknowledgements}
This work was supported by National Natural Science Foundation of
China (12201442, 11971041).

\bibliographystyle{amsplain}
\bibliography{../ref}

\end{document}